\newtheorem{defin}{Definition}
\newtheorem{lemma}{Lemma}
\newtheorem{teo}{Theorem}
\newtheorem{oss}{Remark}
\newtheorem{prop}{Proposition}
\newtheorem{example}{Example}
\author{Jung Kyu CANCI}
\title{Rational periodic points for quadratic maps.\footnotetext{2000 Mathematics Subject Classification: Primary 11G99, 14G05; Secondary 14L30.}
\footnotetext{Key words: rational maps, moduli spaces, $S$-unit equations, reduction modulo $\mathfrak{p}$}}
\date{}
\newcommand{\nc}{\newcommand}
\nc{\pro}{\mathbb{P}_1}
\nc{\po}{\mathbb{P}_1(K)}
\nc{\pq}{\mathbb{P}_1(\mathbb{Q})}
\nc{\ciclon}{(P_0, P_1,\ldots,P_{n-1})}
\nc{\orbit}{(P_{-m},\ldots,P_{-1},P_0,\ldots,P_{n-1})}
\nc{\cicloP}{\{P_i\}}
\nc{\tildep}{$\widetilde{a}^{\hspace{-0.65mm}\phantom{\tiny.}^{\tiny p}}$}
\nc{\spazio}{\hspace{0.7mm}}
\nc{\bdm}{\begin{displaymath}}
\nc{\edm}{\end{displaymath}}
\nc{\beq}{\begin{equation}}
\nc{\eeq}{\end{equation}}
\nc{\ind}{\hspace{-1.3mm}{\phantom{\_}}_}
\nc{\got}{\sffamily{p}\normalfont}
\nc{\ri}{R_S}
\nc{\ru}{R_S^\ast}
\nc{\qc}{\emph{quasi coprime}}
\nc{\p}{\mathfrak{p}}
\nc{\vvs}{\textquotedblleft}
\nc{\vvd}{\textquotedblright}
\nc{\dip}{\delta_{\p}}
\nc{\vap}{v_{\p}}
\nc{\K}{\mathbb{K}}
\nc{\bpm}{\begin{pmatrix}}
\nc{\epm}{\end{pmatrix}}
\nc{\RR}{\mathbb{R}}
\nc{\ZZ}{\mathbb{Z}}
\nc{\NN}{\mathbb{N}}
\nc{\FF}{\mathbb{F}}
\nc{\KK}{\mathbb{K}}
\nc{\ds}{\mathbb{S}}
\nc{\da}{\mathbb{A}}
\nc{\Ts}{T_{\ds}}
\nc{\ov}{\overline}
\nc{\lra}{\Leftrightarrow}
\begin{document}
\maketitle
\begin{abstract}
Let $K$ be a number field. Let $S$ be a finite set of places of $K$ containing all the archimedean ones. Let $R_S$ be the ring of $S$-integers of $K$. In the present paper we consider endomorphisms of $\pro$ of degree $2$, defined over $K$, with good reduction outside $S$. We prove that there exist only finitely many such endomorphisms, up to conjugation by ${\rm PGL}_2(R_S)$, admitting a periodic point in $\po$ of order $>3$. Also, all but finitely many classes with a periodic point in $\po$ of order $3$ are parametrized by an irreducible curve.
\end{abstract}

\section{Introduction.} \noindent For an integer $d\geq 1$, let ${\rm Rat}_d$ denote the space of all rational maps (or morphisms) $\pro \to \pro$ of degree $d$. Every morphism $\Phi\in{\rm Rat}_d$, defined over a field $K$, is given by a pair of homogeneous polynomials $F,G\in K[X,Y]$ in the following way
\begin{align} \label{Phi}\Phi([X:Y])=&[F(X,Y):G(X,Y)]\\\notag=&[f_0X^d+f_1X^{d-1}Y+\ldots+f_dY^d:g_0X^d+g_1X^{d-1}Y+\ldots+g_dY^d];\end{align}
we suppose that $F,G$ have no common roots in $\pro(\overline{K})$, where $\overline{K}$ denotes the algebraic closure of $K$. This last requirement is equivalent to the condition that the resultant ${\rm Res}(F,G)$ of the polynomials $F$ and $G$ is not zero. The resultant ${\rm Res}(F,G)$ is a certain bihomogeneous polynomial in the coefficients $f_0,f_1,\ldots,f_d,g_0,\ldots,g_d$ (e.g. see \cite[Chapter IV.8]{L.2}) so that ${\rm Rat}_d$ is parametrized by the Zariski open subset of $\mathbb{P}_{2d+1}$ defined by ${\rm Res}(F,G)\neq 0$.

The group ${\rm GL}_2$ induces the following conjugation action on ${\rm Rat}_d$: let $A=\begin{pmatrix}\alpha&\beta\\ \gamma & \delta\end{pmatrix}\in {\rm GL}_2$ and let $[A]$ be the canonical automorphism of $\pro$ associated to $A$; for every map $\Phi=[F:G]\in{\rm Rat}_d$ we have \begin{align*}[A]\circ\Phi\circ[A]^{-1}=[&\alpha F(\delta X-\beta Y,-\gamma X+\alpha Y)+\beta G(\delta X-\beta Y,-\gamma X+\alpha Y) :  \\ &\gamma F(\delta X-\beta Y,-\gamma X+\alpha Y)+\delta G(\delta X-\beta Y,-\gamma X+\alpha Y) ].\end{align*} 
The action factors through ${\rm PGL}_2={\rm GL}_2/\{\lambda I_2\mid \lambda\in\mathbb{G}_m\}$, where $I_2=$\footnotesize{$\bpm 1&0\\0&1\epm$}\normalsize, and we set $M_d={\rm Rat}_d/{\rm PGL}_2$.

J. Milnor in \cite{Mil.1} proved that the moduli space $M_2(\mathbb{C})={\rm Rat}_2(\mathbb{C})/{\rm PGL}_2(\mathbb{C})$ is analytically isomorphic to $\mathbb{C}^2$. 
J.H. Silverman in \cite{Sil.1} generalized this result. For various reasons, he considered the action of SL$_2$ and he proved that the quotient space ${\rm Rat}_d/{\rm SL}_2$ exists as a geometric quotient scheme over $\mathbb{Z}$ in the sense of Mumford's geometric invariant theory and that ${\rm Rat}_2/{\rm SL}_2$ is isomorphic to $\mathbb{A}^2_{\mathbb{Z}}$. Note that if $\Omega$ is an algebraically closed field, then we have a projection SL$_2(\Omega)\to{\rm PGL}_2(\Omega)$ which is surjective with kernel $\{\pm 1\}$.

Throughout all the present paper, $K$ denotes a number field with ring of integers $R$. Let $S$ be a fixed finite set of places of $K$ containing all the archimedean ones. Following the definition used in \cite{Sil.2}, an integer $n$ is called the exact (or primitive) period of a point $P$ for a map $\Phi$ if $n$ is the minimal positive integer such that $\Phi^{n}(P)=P$, where $\Phi^n$ denotes the $n$-th iterate of $\Phi$.
Given a positive integer $n$, we are interested in studying the subset of ${\rm Rat}_2(\overline{\mathbb{Q}})/{\rm PGL}_2(\overline{\mathbb{Q}})$ of classes containing a rational map, defined over $K$ with good reduction outside $S$, which admits a periodic point in $\po$ with exact period equal to $n$. 
The notion of good reduction used in the present paper is the following one:  Let   $\Phi\colon\mathbb{P}_1\to\mathbb{P}_1$ be a rational map defined over $K$ as in (\ref{Phi}). Let $\p$ be a non zero prime ideal of $R$, $R_{\p}$ the local ring of $R$ at the prime ideal $\p$ and let $K(\mathfrak{p})=R/\mathfrak{p}$ be the residue field. Suppose that $F,G$ have coefficients in $R_{\mathfrak{p}}$ and that at least one coefficient is a unit in $R_{\mathfrak{p}}$. Let $\tilde{F}$ and $\tilde{G}$ be the polynomials obtained by reducing modulo $\mathfrak{p}$ the coefficients of $F$ and $G$. We shall say that $\Phi$ has good reduction at the prime ideal $\mathfrak{p}$ if the polynomials $\tilde{F}$ and $\tilde{G}$ do not have common roots in $\mathbb{P}_1(\overline{K(\mathfrak{p})})$, where $\overline{K(\mathfrak{p})}$ denotes the algebraic closure of $K(\mathfrak{p})$. Note that, by the assumption on the coefficients of $F$ and $G$, the rational map $\Phi$ has good reduction at the prime ideal $\p$ if the resultant of $F$ and $G$ is a $\p$-unit. We say that a rational map has good reduction outside $S$ if it has good reduction at every prime ideal $\p\notin S$. 

We denote by $R_S$ the ring of $S$-integers and by $R_S^*$ the group of $S$-units (see the next section for the definitions).

The group ${\rm PGL}_2(R_S)$, quotient of ${\rm GL}_2(R_S)$ modulo scalar matrices, acts on $\pro$. It is the group of automorphisms having good reduction outside $S$. It also acts by conjugation on the set of rational maps, defined over $K$ with good reduction outside $S$. Indeed, given an automorphism $[A]\in {\rm PGL}_2(R_S)$ and a rational map $\Phi$ defined over $K$, the rational map $[A]\circ\Phi\circ [A]^{-1}$ has good reduction outside $S$ if and only if the map $\Phi$ has good reduction outside $S$. Furthermore, it is clear that if the $n$-tuple $\ciclon$ is a cycle for the rational map $\Phi$, then the $n$-tuple $([A](P_0),[A](P_1),\ldots,[A](P_{n-1}))$ is a cycle for the rational map $[A]\circ\Phi\circ [A]^{-1}$. 
  
\begin{defin}\label{eqcy}We say that two $n$-tuples $(P_0,P_1,\ldots,P_{n-1})$ and $(Q_0,Q_1,\ldots,Q_{n-1})$ in $\po$ are equivalent if there exists an automorphism $[A]\in{\rm PGL}_2(R_S)$ and an integer $0\leq h\leq n-1$ such that $[A](P_i)=Q_{i+h}$ for every index $0\leq i\leq (n-1)$. For every index $t\geq n$, $Q_t$ denotes the point $Q_{\bar{t}}\in \{Q_0,Q_1,\ldots,Q_{n-1}\}$, where the integer $\bar{t}$ is such that $0\leq\bar{t}\leq n-1$ and $\bar{t}\equiv t$ (mod $n$).
\end{defin}

By the above arguments, we are interested in the action of ${\rm PGL}_2(R_S)$ rather than that of ${\rm PGL}_2(\overline{\mathbb{Q}})$. In this paper we shall prove the following result:

 {\begin{teo}\label{tnf}
Let $n\geq 4$ be a fixed integer. Then with respect to the action of ${\rm PGL}_2(R_S)$, there are only finitely many conjugacy classes of quadratic rational maps defined over $K$ having good reduction outside $S$ and admitting a periodic point in $\po$ with exact period $n$.
\end{teo}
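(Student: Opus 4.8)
The plan is to reduce the problem to a system of $S$-unit equations by exploiting the good-reduction hypothesis together with the combinatorics of a long cycle. First I would normalize: given a quadratic map $\Phi$ with good reduction outside $S$ and a cycle $\ciclon$ in $\po$ with $n\geq 4$, use the action of ${\rm PGL}_2(R_S)$ to move three of the cycle points to $0$, $1$, $\infty$ — more precisely, since the cycle has at least four points, I would place $P_0=0$, $P_1=\infty$ and a third convenient point (say the point mapping to $0$, which is $P_{n-1}$, or else $P_2$) at $1$. After this normalization the remaining cycle points $P_2,\ldots,P_{n-2}$ are coordinates in $\mathbb{P}_1(K)$, and the content of the theorem becomes: there are only finitely many possibilities for these coordinates and for the coefficients of $\Phi$.

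The key arithmetic input is a distance/reduction estimate for maps of good reduction: if $\Phi$ has good reduction at $\p\notin S$ and $P,Q\in\po$, then $\delta_\p(\Phi(P),\Phi(Q))=\delta_\p(P,Q)$, where $\delta_\p$ is the $\p$-adic chordal (``$v_\p$ of the difference of reduced coordinates'') distance; equivalently distinct points of a cycle stay distinct modulo $\p$, and the ``$\p$-adic spacing'' along the cycle is preserved. Concretely, writing the normalized cycle points as elements of $\po$, the differences $P_i-P_j$ and the relevant cross-ratios all turn out to be $S$-units, because reduction outside $S$ forces $v_\p(P_i-P_j)=0$ for all $\p\notin S$ once the endpoints $0,1,\infty$ are fixed in the cycle. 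This is exactly the mechanism used in earlier work on preperiodic points of good reduction (Morton–Silverman, Canci), and I would invoke it here: the consecutive differences $u_i=P_{i+1}-P_i$ and the ``gap'' quantities are $S$-units satisfying linear relations among themselves (each telescoping identity $\sum (P_{i+1}-P_i)=0$ on the cycle gives an $S$-unit equation), so by the finiteness theorem for solutions of $S$-unit equations in two (or boundedly many) variables, there are only finitely many possibilities for the cycle — provided the cycle is long enough that we actually get a nondegenerate two-term $S$-unit equation, which is where $n\geq 4$ (giving enough points) enters.

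Once the cycle $\{P_i\}$ is pinned down to finitely many possibilities (as an unordered configuration of $S$-integral points, with the $P_i-P_j$ being $S$-units), I would recover the map: a quadratic rational map is determined by its values at $\leq 5$ points, so $\Phi$ is determined by the cycle data $\Phi(P_i)=P_{i+1}$ as soon as $n\geq 5$, and for $n=4$ by the four cycle conditions plus the good-reduction normalization one still gets only finitely many $\Phi$ (here one uses that the $3$-parameter family through a given $4$-cycle is cut down by the resultant being an $S$-unit, again an $S$-unit condition on coefficients). Interpolating through $S$-integral points with $S$-unit differences forces the coefficients of $F,G$ to lie in a finite set up to the ${\rm PGL}_2(R_S)$-normalization already performed, so finitely many conjugacy classes result. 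The main obstacle I anticipate is the bookkeeping that turns the cycle-combinatorics into genuinely nondegenerate $S$-unit equations: one must rule out the degenerate cases where all the $S$-unit relations collapse (e.g. the cycle points clustering so that telescoping gives only trivial identities), and handle the borderline small case $n=4$ separately; controlling the resultant (so that good reduction is not lost) throughout the interpolation step is the delicate part, since a priori the interpolated map could have bad reduction at primes outside $S$, and excluding this is what ties the argument back to the $S$-unit finiteness rather than leaving an infinite family.
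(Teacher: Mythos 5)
Your outline matches the paper's at the level of strategy (normalize three cycle points to $[0:1],[1:0],[1:1]$, extract $S$-unit equations for the remaining cycle points, determine the map by interpolation when $n\geq 5$, treat $n=4$ separately), but there is a genuine gap at the very first step and it propagates through everything that follows. The group ${\rm PGL}_2(R_S)$ is \emph{not} $3$-transitive on $\po$: even when $R_S$ is a P.I.D. the stabilizer of the pair $[0:1],[1:0]$ consists of diagonal matrices with $S$-unit entries, so it only rescales the affine coordinate by an $S$-unit and cannot place an arbitrary third cycle point at $[1:1]$. To normalize three points you must pass to ${\rm PGL}_2(K)$, and then two things break in your argument. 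First, the normalized map no longer has good reduction outside $S$, so you are not entitled to conclude that the differences $P_i-P_j$ of the normalized cycle points are $S$-units: the distances $\delta_\p$ are not ${\rm PGL}_2(K)$-invariant, and the results you invoke (Proposition 6.1 of \cite{M.S.2} and Theorem 1 of \cite{C.1}) apply only to the cycle of the good-reduction model, where they pin down $\mathfrak{I}(P_i,P_j)$ only up to the a priori uncontrolled factor $\mathfrak{I}_1$. Second, finiteness of ${\rm PGL}_2(K)$-classes does not by itself imply finiteness of ${\rm PGL}_2(R_S)$-classes, since one $K$-class can split into many $R_S$-classes.

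The paper closes both holes. The second is Lemma \ref{K=S} (Benedetto): two good-reduction quadratic maps that are ${\rm PGL}_2(K)$-conjugate are already ${\rm PGL}_2(R_S)$-conjugate; this is what licenses the $3$-point normalization. The first is the real technical content of the proof: one carries along the conjugating automorphism $[A]\in{\rm PGL}_2(K)$, normalizes it to a lower-triangular matrix with entries $\alpha,\beta,1$ (Lemmas \ref{Afix0} and \ref{genA}), and then performs a divisibility analysis on the image cycle and on the coefficients of $[A]\circ\Psi\circ[A]^{-1}$ to show that the relevant gcd's, and eventually $\lambda_i$, $\mu_i$ and $\mu_i-\lambda_i$, are $S$-units --- only then does the $S$-Unit Equation Theorem apply (Lemma \ref{finc}). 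The same issue undermines your $n=4$ step: the resultant of the normalized map is not an $S$-unit, and the paper instead proves (Lemma \ref{n=34}) that $a,b,\lambda,a\lambda+c\in R_S^*$ by tracking divisibility through the conjugation before combining with the cycle relation to obtain a two-term $S$-unit equation. Without a substitute for this analysis of $[A]$, your argument does not go through.
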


By using a result obtained by Morton and Silverman it is possible to improve on this statement. Indeed Morton and Silverman proved in \cite{M.S.1} that the maximal length of a cycle, for an endomorphism $\Phi$ defined over $K$ with good reduction outside $S$, can be bounded in terms of the cardinality of the set $S$ only. Therefore Theorem \ref{tnf} becomes

\bigskip
 \noindent {\bf Theorem 1$^\prime$.} \emph{With respect to the action of ${\rm PGL}_2(R_S)$, there are only finitely many conjugacy classes of quadratic rational maps defined over $K$ having good reduction outside $S$ and admitting a periodic point in $\po$ with exact period $\geq 4$.}  

\bigskip

From Theorem \ref{tnf}$^\prime$ it follows that there are only finitely many inequivalent $n$-tuples in $\po$ with $n\geq 4$ which are cycles for a rational map defined over $K$ with good reduction outside $S$. If we dropped the condition on good reduction, Theorem \ref{tnf} would be false, as the following example shows:
\begin{example}\label{engr} \emph{For every non zero rational number $a\notin\{-2,2,4\}$, the following ordered $4$-tuple 
\bdm ([0:1], [a:1], [1:0], [2:1])\edm
is a cycle for the quadratic rational map $\Phi_a([X:Y])=[(X-2Y)(4X-a^2Y):2(X-aY)(X-Y)]$. Let $n$ and $d$ be coprime integers such that $a=n/d$, then the rational map $\Phi_a$ has bad reduction at every prime ideal that divides $2\cdot d\cdot n(n-4d)(n^2-4d^2)$. Furthermore, it is easy to see that, given a fixed non zero rational number $a\notin\{-2,2,4\}$, there exist only two rational numbers $b$ such that the two ordered $4$-tuples $([0:1], [a:1], [1:0], [2:1])$ and $([0:1], [b:1], [1:0], [2:1])$ are equivalent.}
\end{example}

 \begin{oss}\emph{The previous example describes an infinite family of quadratic rational maps defined over $\mathbb{Q}$, in which every element has a $\mathbb{Q}$-periodic point of exact period $4$. It would be interesting to study the integers $n\neq 4$ such that all (or infinitely many) elements of the family in Example \ref{engr} also have a $\mathbb{Q}$-periodic point of exact period $n$. For example, in the case $n=2$ it is possible to see that there exist infinitely many values of $a\in \mathbb{Q}$ such that the rational map $\Phi_a$ also admits a $\mathbb{Q}$-periodic point of exact period $2$; we give a very brief proof of this fact, since it is beyond the scope of this article. Let $(a,t)\in\mathbb{Q}^2$ such that $a\notin\{-2,0,2,4\}$. If the point $[t:1]\in\pro(\mathbb{Q})$ is a periodic point of exact period $2$ for the rational map $\Phi_a$, then the pair $(a,t)$ is a $\mathbb{Q}$-rational point of the curve $\mathcal{C}$ defined by $2t^2-a^2t+3at-4t+a^2-4a=0$. The converse is not in general true, but if the curve $\mathcal{C}$ has infinitely many $\mathbb{Q}$-rational points, then there exist infinitely many values of $a$ such that the rational map $\Phi_a$ admits a $\mathbb{Q}$-periodic point of exact period $2$. Some calculations tell us that the curve $\mathcal{C}$ is an affine piece of an elliptic curve $E$ where the Mordell-Weil rank of $E(\mathbb{Q})$ is positive. Therefore the curve $\mathcal{C}$ has infinitely many $\mathbb{Q}$-rational points. }
\end{oss}  

The main tool for the proof of Theorem \ref{tnf} is the $S$-Unit Equation Theorem (See \cite{E.3}\cite{VDP.1}\cite{Sc.2}). Other tools are some divisibility arguments, such as Proposition  6.1    provided by Morton and Silverman in \cite{M.S.2}, and the finiteness result on cycles proved in \cite[Theorem 1]{C.1}. Since the $S$-Unit Equation Theorem rests on the ineffective Subspace Theorem (see \cite{Sc.2},\cite{Sc.1}), Theorem \ref{tnf} is also ineffective.

A section of this paper is dedicated to the case of periodic points of exact period $n=3$, more precisely, the case in which a class of quadratic rational maps contains a map defined over $K$ with good reduction outside $S$, which admits a periodic point in $\po$ with exact period $n=3$. We have dedicated an entire section to this case since it makes use of extra techniques. The main tools used in this section are some results proved by Corvaja and Zannier in \cite{C.Z.1} in which they obtained the estimate, for every fixed $\epsilon>0$, ${\rm gcd}(u-1,v-1)<\max(H(u),H(v))^{\epsilon}$ for all but finitely many multiplicatively independent $S$-units (where {\bf gcd} denotes a suitable notion of greatest common divisor on number fields and $H(\cdot)$ denotes the multiplicative height). The results proved by Corvaja and Zannier in \cite{C.Z.1} also rest on the Subspace Theorem. 

Our result in this case is:

\begin{teo}\label{m1} All but finitely many conjugacy classes of quadratic rational maps defined over $K$, with respect to the action of ${\rm PGL}_2(R_S)$, having good reduction outside $S$ and admitting a periodic point in $\po$ with exact period $3$, are representable by a rational map of the form
\beq\label{param} \Phi([X:Y])=[(X-Y)(aX+Y):aX^2]\eeq
where $a\in R_S^*$.
\end{teo}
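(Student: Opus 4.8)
The plan is to reduce the statement about period-3 cycles to an $S$-unit equation and then analyze when its solutions fail to be finite. First I would normalize a quadratic rational map $\Phi$ with good reduction outside $S$ admitting a $K$-rational periodic point of exact period $3$, say with cycle $(P_0,P_1,P_2)$. Using the action of ${\rm PGL}_2(R_S)$, I would move the three points of the cycle to a convenient configuration; the natural choice, compatible with good reduction, is to send $P_0 \mapsto [0:1]$, $P_1 \mapsto [1:0]$ (or some $S$-integral point), and $P_2 \mapsto [1:1]$ or $[c:1]$ with $c$ to be controlled. Here the key input is the divisibility machinery of Morton--Silverman (their Proposition 6.1, cited in the excerpt) together with the finiteness result \cite[Theorem 1]{C.1} on cycles: these force the cross-ratio-type quantities attached to the three cycle points, and the ``multipliers'' along the cycle, to be $S$-units up to finitely many choices. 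So after normalization the map $\Phi$ depends on finitely many $S$-unit parameters, subject to the algebraic constraints coming from the requirement that $[0:1], [1:0], P_2$ really form a $3$-cycle and that $\Phi$ has degree $2$.

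Next I would write down the general quadratic map fixing the combinatorics of the $3$-cycle in the chosen normalization. A quadratic map is determined by $5$ projective parameters; imposing $\Phi(P_0)=P_1$, $\Phi(P_1)=P_2$, $\Phi(P_2)=P_0$ cuts this down, and the good-reduction condition (resultant an $S$-unit) plus the Morton--Silverman divisibility relations among the $P_i$ and their successive differences translate into a system of equations of the shape $u_1 + u_2 = 1$ (or a short sum of $S$-units equal to $1$) in $S$-units $u_i$ built from the coordinates of $P_2$ and the coefficients of $\Phi$. At this point the $S$-Unit Equation Theorem applies: each genuine two-term $S$-unit equation has only finitely many solutions, so all but finitely many of our maps must live on the locus where these $S$-unit equations degenerate — i.e. where one of the ``units'' involved is forced to be trivial or where two of the relations coincide identically. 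Tracking which unit degenerates is exactly what produces the one-parameter family: the degenerate locus is a curve, and on it the map takes the normal form $\Phi([X:Y]) = [(X-Y)(aX+Y):aX^2]$ with $a \in R_S^*$. I would verify directly that \eqref{param} does have $([0:1],\ldots)$ — concretely one checks $[0:1]\mapsto[1:0]\mapsto[a+1:a]$ (or the appropriate triple) as a genuine period-$3$ cycle — and that its resultant is a unit times a power of $a$, hence it has good reduction outside $S\cup\{\text{primes dividing }a\}$, which is fine since $a$ is already an $S$-unit.

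The main obstacle, and the reason this case needs ``extra techniques'' beyond Theorem \ref{tnf}, is precisely the degenerate locus: the two-term $S$-unit equation argument only gives finiteness when the units are multiplicatively independent, and along the conjectured curve they become dependent, so the naive argument yields an infinite family rather than finitely many classes. To handle this I would invoke the Corvaja--Zannier gcd estimate quoted in the introduction: ${\rm gcd}(u-1,v-1) < \max(H(u),H(v))^\epsilon$ for all but finitely many multiplicatively independent $S$-units $u,v$. The point is that the ``bad reduction'' of a candidate map off the curve \eqref{param} is governed by a gcd of the form ${\rm gcd}(u-1,v-1)$ for $S$-unit parameters $u,v$ of the normalized map; good reduction outside $S$ forces this gcd to be large (a unit times a full factor), which by Corvaja--Zannier can happen for only finitely many multiplicatively independent pairs. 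The remaining multiplicatively \emph{dependent} pairs parametrize exactly the curve, and a final elementary manipulation puts every such map into the form \eqref{param}. The delicate bookkeeping is in making the coordinate choices so that the relevant quantity is literally a gcd of $S$-unit-minus-one expressions — this requires a careful, but essentially mechanical, normalization of both the cycle and the coefficients of $\Phi$, using up the full ${\rm PGL}_2(R_S)$ freedom and the divisibility constraints before any height estimate is invoked.
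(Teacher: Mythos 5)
Your overall architecture (normalize the $3$-cycle, extract unit equations, call on Corvaja--Zannier for the hard divisibility step, and identify the degenerate locus with the family \eqref{param}) matches the paper's in spirit, but two of your key steps do not work as stated. First, you cannot ``use the action of ${\rm PGL}_2(R_S)$'' to move the cycle to $([0:1],[1:0],[1:1])$: that group is not $3$-transitive on $\po$, and this is precisely where the difficulty lives. One normalizes with some $[A]^{-1}\in{\rm PGL}_2(K)$ (legitimate by Benedetto's result, Lemma \ref{K=S}), obtaining $\Psi([X:Y])=[(X-Y)(aX+Y):X(aX+cY)]$, but then good reduction of $[A]\circ\Psi\circ[A]^{-1}$ only yields, after the bookkeeping of Lemmas \ref{n=34} and \ref{n=3part1}, that $a$ and $v=a+c$ are $S$-units while $c-1=u\alpha^2$ with $\alpha$ an $S$-\emph{integer} (essentially ${\rm Disc}([A])$) satisfying $\alpha\mid 1+a+a^2$ and $\alpha\mid 1-v+v^2$. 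So the equation $v-a-u\alpha^2=1$ is \emph{not} yet a unit equation, and the entire point of the argument is to prove that $\alpha$ is a $T$-unit for a fixed finite enlargement $T$ of $S$. Your proposal assumes from the outset that ``the map depends on finitely many $S$-unit parameters,'' which begs exactly this question.

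Second, your treatment of the Corvaja--Zannier input misassigns the role of the multiplicatively dependent pairs. The gcd-type estimates force $\alpha$ to be supported on finitely many primes for all pairs $(a,v)$ \emph{outside} finitely many translates of $1$-dimensional subtori of ${\rm \bf G}_m^2$; the pairs lying \emph{on} such a translate $a^pv^q=\omega$ are not ``exactly the curve \eqref{param}'' --- they are an exceptional locus on which the height estimates say nothing, and the paper must handle them by a separate argument: parametrize $a=\lambda\eta^{-q}$, $v=\eta^p$, rewrite $u\alpha^2=f(\eta)$ for one of finitely many trinomials $f$, check $f$ has a simple root (Lemma \ref{lsr}), and apply Siegel's theorem to $y^2=f(t)$ (Lemma \ref{lfu}) to conclude $\alpha$ is still a $T$-unit there. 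Without this step the finiteness of the possible ideals ${\rm Disc}([A])$ is not established. Only after $\alpha$ is known to be a $T$-unit does the three-term $T$-unit equation $v-a-u\alpha^2=1$ enter; its non-degenerate solutions are finite, and it is the three \emph{degenerate} cases ($a=-1$, $c=0$, $c=1-a$) --- not the dependent pairs --- that produce the one-parameter families, each of which is then conjugated explicitly into the single normal form \eqref{param}.
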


At the end of the present paper we shall prove that the maps of the family (\ref{param}) represent an infinite set of elements in ${\rm Rat}_2(\overline{\mathbb{Q}})/{\rm PGL}_2(\overline{\mathbb{Q}})$.

Recall that, by Silverman's result in \cite{Sil.1}, the moduli space $M_2$ is an affine surface isomorphic to $\mathbb{A}^2$. Theorem \ref{m1} states that the set of elements in $M_2$, containing a map defined over $K$ with good reduction outside $S$ admitting a periodic point in $\po$ with exact period $3$, is not Zariski dense. 
More precisely, it consists  of a finite set, depending on $S$, plus an infinite family contained in a curve parametrized by $\mathbb{G}_m$. It is the family parametrized by (\ref{param}), and its elements, as stated in Theorem \ref{m1}, have a point of period $3$.

 Theorem \ref{tnf} and Theorem \ref{m1} also provide the following geometric interpretation. Since char$(K)=0$, following Milnor's results (\cite{Mil.1}), we see that every class in $M_2$ contains a rational function of the following form 
\beq\label{MNF} f(z)=z+\dfrac{1}{z}\ \ \ \text{or}\ \ \ f_{b,c}(z)=\dfrac{z^2+bz}{cz+1}.\eeq
A rational map of the form $f_{b,c}$ has degree two if and only if $bc\neq 1$; moreover for every given integer $n$ the 
%
condition that a point $x\in\da^1$ is a periodic point of order $n$ for the rational function $f_{b,c}$ is an algebraic condition. Therefore the set of triples $(b,c,x)\in \mathbb{A}^3$, where $bc\neq 1$ and $x$ is a periodic point of exact period $n$ for the rational function $f_{b,c}$, is a quasi projective surface $V_n$. If $(b,c,x)$ is a $S$-integral point of $V_n$, then the associate map $f_{b,c}$ has good reduction outside $S$. By elementary calculations, it is possible to see that every class in $M_2$ contains at most six rational functions of the form as in (\ref{MNF}). Moreover the number of periodic points of exact period $n$ for a quadratic rational map is bounded by a constant which depends only on $n$. In general for a rational map of degree $d$ this bound is $d+1$ for $n=1$ and $\sum_{k|n}\mu\left(\frac{n}{k}\right)d^k$ for $n>1$, where $\mu$ is the M\"obius function (see Exercise 4.3 in \cite{Sil.2}). Let $n>3$ be a given integer. By the above remarks, if $V_n$ had infinitely many $S$-integral points, there would be infinitely many classes in $M_2$ containing a rational map defined over $K$ with good reduction outside $S$ admitting a periodic point in $\po$ with exact period $n$. Therefore by Theorem \ref{tnf} the surface $V_n$ has only finitely many $S$-integral points. In a similar way we see that from Theorem \ref{m1} it follows that the set of $S$-integral points of $V_3$ is not Zariski dense.

This paper also contains a finiteness result on classes of cycles (see Definition \ref{eqcy}) for quadratic rational maps with good reduction outside $S$.  
\begin{teo}\label{tf32}The set of classes modulo PGL$_2(R_S)$ of cycles in $\po$ of length $\geq 3$, for quadratic rational maps defined over $K$ with good reduction outside $S$, is finite.
\end{teo}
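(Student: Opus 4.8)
The plan is to split the cycles by their length, handling length $\geq 4$ with Theorem \ref{tnf}$^\prime$ and length $3$ with Theorem \ref{m1}, and in each case reducing to a finite list of representative maps whose cycles of the relevant length can be counted.

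First suppose $(P_0,\dots,P_{n-1})$ is a cycle of length $n\geq 4$ for a quadratic rational map $\Phi$ defined over $K$ with good reduction outside $S$. Then $P_0$ is a periodic point in $\po$ of exact period $n\geq 4$, so by Theorem \ref{tnf}$^\prime$ the class of $\Phi$ lies in a fixed finite set of $\mathrm{PGL}_2(R_S)$-conjugacy classes; fix one representative for each. By the theorem of Morton and Silverman in \cite{M.S.1} the length of every cycle of a degree-$2$ map with good reduction outside $S$ is bounded by a constant $N=N(S)$, and for each $n$ the number of points of exact period $n$ of a quadratic map is at most $\sum_{k\mid n}\mu(n/k)\,2^k$ (see \cite[Exercise 4.3]{Sil.2}). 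Hence each chosen representative has only finitely many cycles of length $\geq 4$; since conjugation by an element of $\mathrm{PGL}_2(R_S)$ carries a cycle to an equivalent one, every cycle of length $\geq 4$ is equivalent to one on this finite list, and so these form finitely many classes.

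Now let $(P_0,P_1,P_2)$ be a cycle of length $3$ for a quadratic map $\Phi$ with good reduction outside $S$; then $\Phi$ has a periodic point of exact period $3$, so by Theorem \ref{m1} either the class of $\Phi$ lies in a fixed finite set of exceptional $\mathrm{PGL}_2(R_S)$-classes, or $\Phi$ is $\mathrm{PGL}_2(R_S)$-conjugate to $\Phi_a([X:Y])=[(X-Y)(aX+Y):aX^2]$ for some $a\in R_S^*$. The exceptional classes are treated as before: each representative has at most $\sum_{k\mid 3}\mu(3/k)2^k=6$ points of exact period $3$, hence at most two cycles of length $3$, and conjugate maps give equivalent cycles. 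For the remaining classes, since the conjugation lies in $\mathrm{PGL}_2(R_S)$, it suffices to show that $\bigcup_{a\in R_S^*}\{\text{cycles of length }3\text{ of }\Phi_a\text{ in }\po\}$ meets only finitely many equivalence classes. The triple $([0:1],[1:0],[1:1])$ is a cycle of $\Phi_a$ for every $a$, giving one class; and $\Phi_a$ has at most two cycles of length $3$, so everything reduces to the three remaining exact-period-$3$ points of $\Phi_a$. Dividing the numerator of $\Phi_a^{3}(z)-z$ by the fixed-point polynomial of $\Phi_a$ and by $z(z-1)$ yields a cubic $g_a(z)\in K[z]$ whose roots are exactly those points; for $a=1$ one gets $\Phi_1(z)=1-z^{-2}$ and $g_1(z)=z^3+2z^2-z-1$, which is irreducible over $\mathbb{Q}$.

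The heart of the matter is thus to determine for which $a\in R_S^*$ the cubic $g_a$ has a root in $K$, equivalently for which $a$ the second cycle of $\Phi_a$ is $K$-rational. My plan is to analyze the plane curve $C\colon g_a(z)=0$, on which $a$ is a non-constant function of degree $3$: if the smooth model of the relevant component of $C$ has positive genus, or has genus $0$ but $a$ has at least three zeros and poles on it, then Siegel's theorem (respectively the $S$-unit equation theorem) forces $\{(a,z)\in C(K):a\in R_S^*\}$ to be finite, so $g_a$ has a $K$-root for only finitely many $a\in R_S^*$, and those finitely many second cycles add only finitely many classes; combined with the preceding steps this proves the theorem. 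I expect the main obstacle to be supplying exactly this geometric input: one must carry out the explicit computation of $g_a$ for general $a$ and rule out the degenerate case in which $C$ is a rational curve with $a$ having at most two zeros/poles on it --- for then $a$ could be an $S$-unit at infinitely many points of $C(K)$, and one would have to fall back on a direct normalization of the cycle itself (using that a length-$3$ cycle of a good-reduction map has points that are pairwise coprime modulo every prime outside $S$ and the finitely many primes above $2$ and $3$, by Morton--Silverman bounds on cycle lengths, so that it can be moved onto $([0:1],[1:0],[1:1])$) to keep the count finite. The remaining bookkeeping --- the cyclic shifts of Definition \ref{eqcy} and the passage from $\mathrm{PGL}_2(\overline{\mathbb{Q}})$- to $\mathrm{PGL}_2(R_S)$-conjugacy --- is routine by comparison.
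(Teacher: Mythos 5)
Your treatment of cycles of length $\geq 4$ is correct and is exactly what the paper does: Theorem \ref{tnf}$^\prime$ gives finitely many classes of maps, each representative has finitely many periodic points, and ${\rm PGL}_2(R_S)$-conjugation carries cycles to equivalent cycles. The problem is the case $n=3$, where your argument has a genuine gap at precisely the step you flag as ``the heart of the matter.'' Your strategy requires showing that for all but finitely many $a\in R_S^*$ the second $3$-cycle of $\Phi_a$ is not $K$-rational, via the geometry of the curve $C\colon g_a(z)=0$. You never compute $g_a$ for general $a$, never determine the genus of $C$ or the zero--pole divisor of $a$ on it, and you explicitly admit you cannot exclude the degenerate case (rational curve, $a$ with at most two zeros and poles) in which infinitely many $S$-unit values of $a$ could occur. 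The fallback you offer for that case is not a proof: the assertion that the points of a length-$3$ cycle of a good-reduction map are ``pairwise coprime'' outside a fixed finite set of primes does not follow from the Morton--Silverman bound on cycle lengths. What \cite[Theorem 1]{C.1} gives for free is only that $\mathfrak{I}_i\mathfrak{I}_1^{-1}$ lies in a finite set; bounding $\mathfrak{I}_1$ itself is the hard point, and it is exactly what the paper's Proposition \ref{n=3} is for (finitely many possibilities for ${\rm Disc}([A])$, combined with the inequality $v_\p(\mathfrak{I}_1)\leq v_\p({\rm Disc}(A))$ and with \cite[Proposition 6.1]{M.S.2} to get $\mathfrak{I}_1=\mathfrak{I}_2$). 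Moreover, even granting bounded $\mathfrak{I}(P_i,P_j)$, the passage to ``finitely many classes modulo ${\rm PGL}_2(R_S)$'' is not automatic: it is Proposition \ref{fic}, which rests on the Birch--Merriman finiteness theorem for binary forms, and your sketch does not supply a substitute.

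For comparison, the paper does not use Theorem \ref{m1} here at all. It normalizes an arbitrary $3$-cycle $(P_0,P_1,P_2)$ to $([0:1],[1:0],[1:1])$ by some $[C]\in{\rm PGL}_2(K)$, applies Proposition \ref{n=3} to $\Psi=[C]\circ\Phi\circ[C]^{-1}$ and $[A]=[C]^{-1}$ to conclude that ${\rm Disc}([A])$, hence $\mathfrak{I}_1=\mathfrak{I}_2$, lies in a finite set of ideals, and then invokes Proposition \ref{fic} to conclude that such triples fall into finitely many ${\rm PGL}_2(R_S)$-classes. Note also that your route, if completed, would establish the stronger statement that only finitely many $\Phi_a$ possess a second $K$-rational $3$-cycle; the paper's theorem does not require this, and if it happens to be false (which your uncomputed degenerate case leaves open), your approach collapses back onto needing the coprimality-plus-Birch--Merriman machinery in any event. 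As it stands, the $n=3$ case is not proved.
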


The statement of Theorem \ref{tf32} would be false if we considered cycles of length $2$. In the next section we shall give an infinity family of inequivalent cycles (under the action induced by ${\rm PGL}_2(R_S)$) of length $2$ for monic polynomial maps of degree $2$ with coefficients in $R_S$. These maps have good reduction outside $S$.

Theorem \ref{tf32} is a new and non trivial result because in Definition \ref{eqcy}, about equivalence relation on cycles, we consider the action induced by ${\rm PGL}_2(R_S)$. Since the canonical action of ${\rm PGL}_2(K)$ on $\po$ is $3$-transitive, if we considered the induced action of ${\rm PGL}_2(K)$ on $n$-tuples, then the statement of Theorem \ref{tf32} would be trivially true for positive $n\leq3$ and for $n\geq 4$ it would be an easy corollary of \cite[Theorem 1]{C.1}. 

The condition on the degree is quite important too; for instance the statement of Theorem \ref{tf32} does not hold for maps of degree $4$ (see \cite[Theorem 2]{C.1}). Clearly the condition of good reduction is also crucial, as shown by Example \ref{engr}.

\bigskip

\emph{Acknowledgments.} The present work was supported by a grant from the Department of Mathematics and Computer Science at the University of Udine. The subject treated in this article was suggested to me by Pietro Corvaja. I would like to thank him for his useful advice and comments.  I am also grateful to the referee for pointing out some gaps in the previous proofs of Lemma \ref{n=34} and Lemma \ref{n=3part2} and for suggesting several improvements on the presentation of the paper.

\section{Notation and preliminary lemmas.} Throughout the present paper $K$ will be a number field, $R$ the ring of algebraic integers of $K$, $\p$ a non zero prime ideal of $R$; $\vap$ the $\p$-adic valuation on $R$ corresponding to the prime ideal $\p$ (we always assume $v_\mathfrak{p}$ to be normalized so that $v_{\mathfrak{p}}(K^*)=\mathbb{Z}$), $S$ a fixed finite set of places of $K$ including all archimedean places.\\
We shall denote by 
\bdm R_S \coloneqq\{x\in K \mid v_{\mathfrak{p}}(x)\geq0 \ \text{for every prime ideal }\ \mathfrak{p}\notin S\}\edm
the ring of $S$-integers and by 
\bdm R_S^\ast \coloneqq\{x\in K^\ast\mid v_{\mathfrak{p}}(x)=0 \ \text{for every prime ideal }\ \mathfrak{p}\notin S\}\edm
the group of $S$-units.

Let $\Phi\colon\pro\to\pro$ be a rational map defined over $K$ by $\Phi([X:Y])=[F(X,Y):G(X,Y)]$ where $F(X,Y),G(X,Y)$ are homogeneous polynomials in $R[X,Y]$ of the same degree $d$. It is easy to see that the polynomials $F$ and $G$ can be chosen with no polynomial common factors. We shall always make such a choice. 

If $H(t_1,\ldots,t_k)\in K[t_1,\ldots,t_k]$ is a non zero polynomial we define $v_{\mathfrak{p}}(H)$ as
\begin{equation}\label{vh} v_{\mathfrak{p}}(H)=v_{\mathfrak{p}}\left(\sum_{\substack{I}}a_{_I}t_1^{i_1}\cdots t_k^{i_k}\right)=\min_{\substack{I}}v_{\mathfrak{p}}(a_{_I})\end{equation}
where the minimum is taken over all multi-indices  $I=(i_i,\ldots,i_k)$. That is, $v_{\mathfrak{p}}(H)$ is the smallest valuation of the coefficients of the polynomial $H(t_1,\ldots,t_k)$. 

 We denote the discriminant of the rational map $\Phi$ by {Disc}$(\Phi)$. It is the ideal of $R$ whose valuation at a prime ideal $\mathfrak{p}$ is given by  
\begin{displaymath} v_{\mathfrak{p}}(\text{Disc}(\Phi))=v_{\mathfrak{p}}(\text{Res}(F,G))-2d\cdot\min\{v_{\mathfrak{p}}(F),v_{\mathfrak{p}}(G)\},\end{displaymath}
where $\text{Res}(F,G)$ denotes the resultant of the polynomials $F$ and $G$. By the properties of the resultant this definition is a good one, since  it does not depend on the choice of the homogeneous coefficients of the polynomials $F$ and $G$.

\begin{defin}\label{gred}We say that a morphism $\Phi\colon\mathbb{P}_1\to\mathbb{P}_1$ defined over $K$ has good reduction at a prime ideal $\mathfrak{p}$ if $v_{\mathfrak{p}}(\text{Disc}(\Phi))=0$. \end{defin}

Proposition 4.2 in \cite{M.S.2} proves that Definition \ref{gred} is equivalent to the notion of good reduction given in the introduction of this paper.

For every prime ideal $\p$, Definition \ref{gred} provides a simple condition to check whether a given rational map $\Phi$ has good reduction at a prime ideal $\p$.

Now we fix some notation useful to give the statement of some known results that we shall use in the sequel.
Let $P_1=\left[x_1:y_1\right],P_2=\left[x_2:y_2\right]$ be two distinct points in $\mathbb{P}_1(K)$ and let $\mathfrak{p}$ be a non zero prime ideal of $R$. Using the notation of  \cite{M.S.2} we shall denote by  \begin{equation}\label{d_p}\delta_{\mathfrak{p}}\,(P_1,P_2)=v_{\mathfrak{p}}\,(x_1y_2-x_2y_1)-\min\{v_{\mathfrak{p}}(x_1),v_{\mathfrak{p}}(y_1)\}-\min\{v_{\mathfrak{p}}(x_2),v_{\mathfrak{p}}(y_2)\}\end{equation}the $\mathfrak{p}$-adic logarithmic distance; $\delta_{\mathfrak{p}}\,(P_1,P_2)$ is independent of the choice of the homogeneous coordinates, i.e. it is well defined.

To every pair $P,Q\in\mathbb{P}_1(K)$ of distinct points we associate the ideal
\beq\label{I_ij} \mathfrak{I}(P,Q):=\prod_{\substack{\mathfrak{p}\notin S}}\mathfrak{p}^{\delta_{\mathfrak{p}}(P,Q)}.\eeq

To every $n$-tuple $(P_0, P_1,\ldots,P_{n-1})$ we can associate the $(n-1)$-tuple of ideals $(\mathfrak{I}_1,\mathfrak{I}_2,\ldots,\mathfrak{I}_{n-1})$ defined by  
\begin{equation}\label{I_i} \mathfrak{I}_i:=\prod_{\substack{\mathfrak{p}\notin S}}\mathfrak{p}^{\delta_{\mathfrak{p}}(P_0,P_i)}=\mathfrak{I}(P_0,P_i).\end{equation}

A cycle of length $n$ for a rational map $\Phi$ is an ordered $n$-tuple $(P_0, P_1,\ldots,P_{n-1})$ of distinct points of  $\mathbb{P}_1(K)$ with the property that $\Phi(P_i)=P_{i+1}$ for every $i\in\{0,1,\ldots,n-2\}$ and such that $\Phi(P_{n-1})=P_0$. If $\ciclon$ is a cycle for a rational map with good reduction outside $S$, then $\mathfrak{I}_i\cdot\mathfrak{I}_1^{-1}$ is an ideal of $R_S$ for every index $i\in\{1,\ldots,n-1\}$.  This last fact is an easy consequence of Proposition 5.1 and Proposition 6.1 in \cite{M.S.2}.    
Note that if two $n$-tuples are equivalent, then they have the same associated $(n-1)$-tuple of ideals.

In \cite{C.1} we proved the following:

\vspace{4mm}
\noindent{\bf \cite[Theorem 1]{C.1}} \emph{There exists a finite set $\mathbb{I}_S$ of ideals of $R_S$, depending only on $S$ and $K$, with the following property: for any cycle $(P_0, P_1,\ldots,P_{n-1})$, for a rational map of degree $\geq2$ with good reduction outside the finite set $S$, let $(\mathfrak{I}_1,\mathfrak{I}_2,\ldots,\mathfrak{I}_{n-1})$ be the associated $(n-1)$-tuple of ideals as in (\ref{I_i}); then \begin{displaymath} \mathfrak{I}_i\cdot\mathfrak{I}_1^{-1}\in\mathbb{I}_S\end{displaymath} 
for every index $i\in\{1,\ldots,n-1\}$.}

\vspace{4mm}

In the particular case of quadratic rational maps, Theorem \ref{tf32} is stronger than the above statement. Indeed, \cite[Theorem 1]{C.1} states that if  $(\mathfrak{I}_1,\mathfrak{I}_2,\ldots,\mathfrak{I}_{n-1})$ is the $(n-1)$-tuple of ideals associated to a cycle in $\po$ (for a rational map with good reduction outside $S$), then there are only finitely many possibilities for the ideal $\mathfrak{I}_i\mathfrak{I}_1^{-1}$ of $R_S$. Writing
\begin{displaymath} (\mathfrak{I}_1,\ldots,\mathfrak{I}_{n-1})=\mathfrak{I}_1\cdot(R_S,\mathfrak{I}_2\mathfrak{I}_1^{-1},\ldots,\mathfrak{I}_{n-1}\mathfrak{I}_1^{-1})\end{displaymath}
one has that for the factor $(R_S,\mathfrak{I}_2\mathfrak{I}_1^{-1},\ldots,\mathfrak{I}_{n-1}\mathfrak{I}_1^{-1})$ there are only finitely many possibilities. But, in the general case of rational maps of arbitrary degree, as shown in  \cite[Theorem 2]{C.1}, there could exist infinitely many possibilities for the factor $\mathfrak{I}_1$. Actually in \cite[Theorem 2]{C.1} we explicitly showed that the ideal $\mathfrak{I}_1$ has infinitely many possibilities, already for maps of  degree $4$.
In the present work we shall see that if one considers only quadratic rational maps, then there are only finitely many possibilities also for the factor $\mathfrak{I}_1$, when the cycle has length $\geq 3$.

Note that for cycles with length $n=2$ there exist infinitely many possibilities for $\mathfrak{I}_1$, even in the case of quadratic rational maps. Indeed, it is easy to see that every monic polynomial $\phi(x)\in R_S[x]$ induces a rational map $\Phi([X:Y])=[\phi(X/Y):1]$ which has good reduction outside $S$. For instance, for every $S$-integer $a$ the rational map $\Phi([X:Y])=[(X-Y)(X-aY):Y^2]$ has good reduction at every prime ideal $\p\notin S$, since $\Phi$ is induced by the monic polynomial $(x-1)(x-a)$, and the couple $([0:1],[a:1])$ is a cycle for $\Phi$. If two integers $a$ and $a^\prime$ generate two distinct principal ideals of $R_S$, then the cycles $([0:1],[a:1])$ and $([0:1],[a^\prime:1])$ are not equivalent modulo the action induced by ${\rm PGL}_2(R_S)$.

Since a ring of $S$-integers is not always a principal ideal domain (P.I.D.),  we shall often use the arguments contained in the following:
\begin{oss}\label{rpid}\emph{Up to enlarging the set $S$, we can suppose that $R_S$ is a P.I.D. Note that this last condition  only requires that we add    a finite number of prime ideals to $ S$. Indeed, by a simple inductive argument it follows that we have to add at most $h_{ S}-1$ prime ideals, where $h_{ S}$ is the class number of $\ri$ which is a finite number (e.g. see \cite[Chapter 5]{M.1}). Otherwise from \vvs Dirichlet's Theorem\vvd\ (e.g. see \cite[Chapter 8]{M.1}), about the uniform distribution of the prime ideals among the ideal classes, we obtain that there exist two (actually infinitely many) finite  sets   ${\ds}_1$ and ${\ds}_2$ of places of $K$ such that ${\ds}_1\cap {\ds}_2=S$ and $R_{\ds_1}$ and $R_{\ds_2}$ are principal ideal domains. }\end{oss}

The following will also be useful: By enlarging $S$ to a finite set $\ds$ so that $R_{\ds}$ is a P.I.D., we have many technical advantages. For instance we can define the \emph{greatest common divisor} of two $\ds$-integers.  Given two $\ds$-integers $a$ and $b$, every $\ds$-integer $d$ such  that $a\cdot R_{\ds}+ b\cdot R_{\ds}=d\cdot R_{\ds}$ will be called a greatest common divisor of $a$ and $b$. Clearly this definition generalizes the one on $\mathbb{Z}$. If the $\ds$-integers $a$ and $b$ are such that $a\cdot R_{\ds}+ b\cdot R_{\ds}=R_{\ds}$, we shall say that $a$ and $b$ are coprime. Furthermore, since $R_{\ds}$ is a P.I.D., the group ${\rm PGL}_2(R_{\ds})$ acts transitively on $\po$. We use this property to prove a lemma which is elementary but often useful in the sequel.

\begin{lemma}\label{Afix0} Let $R_S$ be a P.I.D.. Let $\Psi$ be a rational map defined over $K$. If there exists an automorphism $[B]\in{\rm PGL}_2(K)$ such that the rational map $[B]\circ\Psi\circ[B]^{-1}$ has good reduction outside $S$, then there exists an automorphism $[A]\in {\rm PGL}_2(K)$ of the shape $[A]=[C]\circ[B]$ for a suitable $[C]\in {\rm PGL}_2(R_S)$ such that:
\begin{enumerate} 
\item $[A]$ fixes the point $[0:1]$;
\item the rational map $[A]\circ \Psi\circ [A]^{-1}$ has good reduction outside $S$.
\end{enumerate} 
\end{lemma}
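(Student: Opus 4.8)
The plan is to start from the given automorphism $[B]$ conjugating $\Psi$ to a map with good reduction outside $S$, and correct it by an element $[C]\in{\rm PGL}_2(R_S)$ so that the composition $[A]=[C]\circ[B]$ fixes $[0:1]$ while preserving the good reduction property. First I would look at the point $Q=[B]^{-1}([0:1])\in\po$; equivalently, let $[B]([0:1])$ be some point of $\po$, call it $[u:v]$ with $u,v\in R_S$. Since $R_S$ is a P.I.D., we may clear a common factor and assume $u$ and $v$ are coprime in $R_S$; then there exist $s,t\in R_S$ with $us+vt=1$, so the matrix $\bpm t & s \\ -v & u\epm$ (or a suitable variant) lies in ${\rm GL}_2(R_S)$ with determinant a unit, hence defines an element $[C]\in{\rm PGL}_2(R_S)$. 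By construction $[C]$ sends $[u:v]=[B]([0:1])$ to $[0:1]$. Setting $[A]=[C]\circ[B]$, property (1) is immediate: $[A]([0:1])=[C]([B]([0:1]))=[C]([u:v])=[0:1]$.

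For property (2), I would use the fact, recalled in the introduction of the paper, that conjugation by an element of ${\rm PGL}_2(R_S)$ preserves good reduction outside $S$: indeed $[A]\circ\Psi\circ[A]^{-1}=[C]\circ\bigl([B]\circ\Psi\circ[B]^{-1}\bigr)\circ[C]^{-1}$, and since $[B]\circ\Psi\circ[B]^{-1}$ has good reduction outside $S$ by hypothesis and $[C]\in{\rm PGL}_2(R_S)$, the conjugate $[C]\circ\bigl([B]\circ\Psi\circ[B]^{-1}\bigr)\circ[C]^{-1}$ also has good reduction outside $S$. This is exactly the statement that ${\rm PGL}_2(R_S)$ is the group of automorphisms with good reduction outside $S$ and that it acts by conjugation on the set of maps with good reduction outside $S$, as spelled out before Definition \ref{eqcy}. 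That completes property (2), and $[A]$ is of the required shape $[C]\circ[B]$ by construction.

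The only genuine point requiring care — the ``main obstacle,'' though it is a mild one — is the passage from the point $[B]([0:1])$ to a pair of coprime $S$-integer coordinates, and the construction of an integral matrix with unit determinant sending that point to $[0:1]$; this is precisely where the P.I.D. hypothesis on $R_S$ is used (equivalently, the stated transitivity of ${\rm PGL}_2(R_S)$ on $\po$), since over a general Dedekind ring one could only send the point to $[0:1]$ after a further enlargement of $S$. Everything else is a formal consequence of the functoriality of conjugation and the characterization of good reduction recalled above; no delicate estimates or $S$-unit equation input is needed here.
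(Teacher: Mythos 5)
Your proof is correct and follows essentially the same route as the paper: the paper simply takes $[C]\in{\rm PGL}_2(R_S)$ sending $[B]([0:1])$ to $[0:1]$, citing the transitivity of ${\rm PGL}_2(R_S)$ on $\po$ (which holds because $R_S$ is a P.I.D.), and then uses that conjugation by ${\rm PGL}_2(R_S)$ preserves good reduction. Your explicit B\'ezout construction of $[C]$ is just an unwinding of that transitivity statement, so nothing is missing.
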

\begin{proof}The proof follows easily by taking as $[C]$ an automorphism in ${\rm PGL}_2(R_S)$ which sends the point $[B]([0:1])$ to the point $[0:1]$. Such an automorphism $[C]$ exists since the group ${\rm PGL}_2(R_S)$ acts transitively on $\po$. 
\end{proof}

 The following will also be useful:   

\begin{lemma}\label{K=S} If two quadratic rational maps, defined over $K$ with good reduction outside $S$, belong to the same orbit, with respect to the action of ${\rm PGL}_2(K)$, then they belong to the same orbit with respect to the conjugation action induced by ${\rm PGL}_2(R_S)$.\end{lemma}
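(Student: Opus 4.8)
The plan is to prove something slightly stronger: \emph{if $[A]\in{\rm PGL}_2(K)$ and the maps $\Phi_1$ and $[A]\circ\Phi_1\circ[A]^{-1}$ both have good reduction outside $S$, then $[A]\in{\rm PGL}_2(R_S)$.} This immediately gives the lemma, for if $\Phi_2=[A]\circ\Phi_1\circ[A]^{-1}$ with $[A]\in{\rm PGL}_2(K)$ and both $\Phi_1,\Phi_2$ have good reduction outside $S$, then $[A]$ itself exhibits $\Phi_1$ and $\Phi_2$ as conjugate under ${\rm PGL}_2(R_S)$. Membership in ${\rm PGL}_2(R_S)$ amounts to good reduction at every prime $\mathfrak{p}\notin S$, so I fix such a $\mathfrak{p}$ and a representative matrix $A=\bigl(\begin{smallmatrix}\alpha&\beta\\\gamma&\delta\end{smallmatrix}\bigr)$ with entries in $R_\mathfrak{p}$ and $\min\{v_\mathfrak{p}(\alpha),v_\mathfrak{p}(\beta),v_\mathfrak{p}(\gamma),v_\mathfrak{p}(\delta)\}=0$, and must show $e:=v_\mathfrak{p}(\det A)=0$.

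First I set up coordinates. Choose homogeneous $F_1,G_1\in R_\mathfrak{p}[X,Y]$ of degree $2$, without common factor and with $\min\{v_\mathfrak{p}(F_1),v_\mathfrak{p}(G_1)\}=0$, representing $\Phi_1$; good reduction of $\Phi_1$ at $\mathfrak{p}$ means $v_\mathfrak{p}({\rm Res}(F_1,G_1))=0$. By the conjugation formula recalled in the Introduction, $[A]\circ\Phi_1\circ[A]^{-1}=[F_2:G_2]$ with $F_2=\alpha F_1^\sigma+\beta G_1^\sigma$ and $G_2=\gamma F_1^\sigma+\delta G_1^\sigma$, where $F_1^\sigma(X,Y)=F_1(\delta X-\beta Y,-\gamma X+\alpha Y)$ and $G_1^\sigma(X,Y)=G_1(\delta X-\beta Y,-\gamma X+\alpha Y)$; note that $F_1^\sigma,G_1^\sigma,F_2,G_2$ all lie in $R_\mathfrak{p}[X,Y]$ since $A$ does.

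The core of the proof is to confront two valuation bounds. On one hand, the transformation law of the resultant under conjugation — which follows from the classical identities ${\rm Res}(F\circ N,G\circ N)=(\det N)^{4}{\rm Res}(F,G)$ and ${\rm Res}(aP+bQ,cP+dQ)=(ad-bc)^{2}{\rm Res}(P,Q)$ for binary quadratic forms, used with $N={\rm adj}(A)$ (see also \cite{Sil.2}) — gives ${\rm Res}(F_2,G_2)=(\det A)^{6}{\rm Res}(F_1,G_1)$, hence $v_\mathfrak{p}({\rm Res}(F_2,G_2))=6e$. Putting $m:=\min\{v_\mathfrak{p}(F_2),v_\mathfrak{p}(G_2)\}$, Definition \ref{gred} together with good reduction of $[A]\circ\Phi_1\circ[A]^{-1}$ gives $0=v_\mathfrak{p}({\rm Disc}([A]\circ\Phi_1\circ[A]^{-1}))=6e-4m$, i.e. $3e=2m$. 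On the other hand, inverting the relations $F_2=\alpha F_1^\sigma+\beta G_1^\sigma$, $G_2=\gamma F_1^\sigma+\delta G_1^\sigma$ yields $(\det A)\,F_1^\sigma=\delta F_2-\beta G_2$ and $(\det A)\,G_1^\sigma=-\gamma F_2+\alpha G_2$; since $\alpha,\beta,\gamma,\delta\in R_\mathfrak{p}$ this forces $e+v_\mathfrak{p}(F_1^\sigma)\ge m$ and $e+v_\mathfrak{p}(G_1^\sigma)\ge m$. If $e=0$ we are done; otherwise $\det({\rm adj}(A))=\det(A)$ is a non-unit at $\mathfrak{p}$ while ${\rm adj}(A)$ is still primitive, so $\widetilde{{\rm adj}(A)}$ is a nonzero singular matrix over $K(\mathfrak{p})$. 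Hence there are a point $[c_1:c_2]\in\mathbb{P}_1(\overline{K(\mathfrak{p})})$ and a nonzero linear form $L$ with $\widetilde{{\rm adj}(A)}\,(X,Y)^{t}=L(X,Y)\,(c_1,c_2)^{t}$, whence $\widetilde{F_1^\sigma}=\widetilde{F_1}(c_1,c_2)\,L^{2}$ and $\widetilde{G_1^\sigma}=\widetilde{G_1}(c_1,c_2)\,L^{2}$. As $\Phi_1$ has good reduction, $\widetilde{F_1}$ and $\widetilde{G_1}$ share no zero, so $(\widetilde{F_1}(c_1,c_2),\widetilde{G_1}(c_1,c_2))\ne(0,0)$ and at least one of $F_1^\sigma,G_1^\sigma$ has $\mathfrak{p}$-adic valuation $0$; taking that one in the corresponding inequality above gives $m\le e$. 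Combined with $3e=2m$ this yields $3e\le 2e$, hence $e=0$ — contradicting $e\ge1$. Thus $e=0$ in all cases, and $[A]$ has good reduction at $\mathfrak{p}$.

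The crux, and the only place the quadratic hypothesis enters, is the clash between $3e=2m$ and $m\le e$: the exponent $d(d+1)=6$ in the resultant transformation strictly beats the ``content'' correction $2d\cdot m=4m$ exactly because $d\ge2$, which is what forces $e=0$. For Möbius maps ($d=1$) the analogous relations read $2e=2m$ and $m\le e$, compatible for every $e$ — as they must be, since a Möbius conjugation need not have good reduction — while the same argument in fact works verbatim for rational maps of any degree $d\ge2$. Beyond this the proof is routine bookkeeping: checking that ${\rm adj}(A)$, $F_1^\sigma$ and $G_1^\sigma$ stay $\mathfrak{p}$-integral, and that good reduction of $\Phi_1$ genuinely prevents $\widetilde{F_1}$ and $\widetilde{G_1}$ from vanishing together at the image point $[c_1:c_2]$ of the degenerate reduction $\widetilde{{\rm adj}(A)}$.
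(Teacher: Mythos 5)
Your proof is correct, but it is a genuinely different route from the paper's, which disposes of this lemma simply by citing Proposition 3.1 of Benedetto \cite{B.2}; what you have written is essentially a self-contained reconstruction of that result. Both pillars of your argument are sound: the transformation law ${\rm Res}(F_2,G_2)=(\det A)^{6}\,{\rm Res}(F_1,G_1)$ (consistent with a double application of Lemma \ref{DeM}, which yields the exponent $2+4=6$) converts good reduction of the conjugate into $3e=2m$, while the adjugate/rank-one reduction argument, using that $\widetilde{F_1}$ and $\widetilde{G_1}$ have no common projective zero, gives $m\le e$; together these force $e=0$ exactly because $d\ge 2$, and your observation that the same bookkeeping is vacuous for $d=1$ matches the paper's remark that the lemma fails for maps of degree one. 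The one point to flag is your opening reduction, ``membership in ${\rm PGL}_2(R_S)$ amounts to good reduction at every prime $\mathfrak{p}\notin S$'': with ${\rm PGL}_2(R_S)$ defined as ${\rm GL}_2(R_S)$ modulo scalars, passing from the local conditions $v_\mathfrak{p}(\det A)=2\min_{i,j}v_\mathfrak{p}(a_{ij})$ at every $\mathfrak{p}\notin S$ to a single global representative in ${\rm GL}_2(R_S)$ requires the ideal $\prod_{\mathfrak{p}\notin S}\mathfrak{p}^{\min_{i,j} v_\mathfrak{p}(a_{ij})}$ to be principal, which is automatic only when $R_S$ is a principal ideal domain. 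Since this identification is exactly the convention the paper itself adopts in the introduction, and the issue disappears after the enlargement of $S$ described in Remark \ref{rpid}, this is a caveat inherited from the paper rather than a defect of your argument. What your approach buys is a transparent explanation of where the quadratic (indeed, degree $\ge 2$) hypothesis is used, at the cost of redoing work the paper outsources to the literature.
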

\begin{proof}See Proposition 3.1 in \cite{B.2}.
\end{proof}
Notice that the statement of Lemma \ref{K=S} would be false if we considered maps of degree 1.
By Lemma \ref{K=S} we can prove Theorem \ref{tnf} and Theorem \ref{m1} considering the conjugation action induced by ${\rm PGL}_2(K)$.  Since the canonical action of ${\rm PGL}_2(K)$ on $\po$ is $3$-transitive, every quadratic rational map defined over $K$ is ${\rm PGL}_2(K)$-equivalent to a  map    of the form
\beq\label{nf} \Psi([X:Y])=[(X-\lambda Y)(aX+bY):X(aX+cY)]\eeq
for suitable $\lambda\in K$ and $a,b,c\in R_S$. Indeed, given a quadratic rational map $\Phi$ defined over $K$, we take four distinct $K$-rational points $\eta,\alpha,\beta$ and $\gamma$ such that
\bdm \eta\stackrel{\Phi}{\longmapsto}\alpha\stackrel{\Phi}{\longmapsto}\beta\stackrel{\Phi}{\longmapsto}\gamma.\edm 
Let $[A]\in {\rm PGL}_2(K)$ be the automorphism such that $[A](\alpha)=[0:1]$, $[A](\beta)=[1:0]$ and $[A](\gamma)=[1:1]$. 
Let $\lambda\in K$ such that $[A](\eta)=[\lambda:1]$, then there exist $a,b,c\in R_S$ such that $\Psi=[A]\circ\Phi\circ [A]^{-1}$ has the form as in (\ref{nf}). 
In the sequel we shall study whether a rational map of the form as in (\ref{nf}) is conjugate, via an automorphism of ${\rm PGL}_2(K)$, to a rational map with good reduction outside $S$ having $[0:1]$ as a periodic point with exact period $\geq 3$. 

As already mentioned in the introduction of this paper, an important tool for our proofs is the so called $S$-Unit Equation Theorem.

\bigskip
\noindent{\bf $S$-Unit Equation Theorem} (See \cite{E.3}\cite{VDP.1}\cite{Sc.2}). \emph{The equation
\bdm a_1x_1+a_2x_2+\ldots+a_nx_n=1,\edm
where $a_i$ are fixed elements in $K^*$ for all indices $i\in\{1,2,\ldots,n\}$, has only finitely many non-degenerate solutions
$(x_1,x_2,\ldots,x_n)\in (\ru)^n.$}

\bigskip
A solution is called non-degenerate if no subsum vanishes (i.e. $\sum_{i\in I}a_ix_i\neq 0$ for some nonempty subset $I\subsetneq\{1,2,\ldots,n\}$).
This theorem will be applied with $n$ equals to 2 and 3.
We shall also use the divisibility argument stated in Proposition 6.1 in \cite{M.S.2} and that we can write in the following form:

\vspace{4mm} 
\noindent{\bf \cite[Proposition 6.1]{M.S.2}}   \emph{If $(P_0,P_1,\ldots,P_{n-1})\subset \po$ is a cycle for a rational map, defined over $K$ with good reduction outside $S$, then
\bdm \mathfrak{I}_i=\mathfrak{I}(P_0,P_i)=\mathfrak{I}(P_k,P_{k+i})\edm
for all $i\neq0$ and $k$ in $\NN$, where $\mathfrak{I}_i$ is an ideal defined as in (\ref{I_i}).
}

If the ring $R_S$ is a P.I.D. we have a simple description of the ideals defined in  (\ref{I_ij})  . Given four $S$-integers $x_0,y_0,x_1,y_1$, if $d_0$ is a greatest common divisor of $x_0$ and $y_0$ and $d_1$ is a greatest common divisor of $x_1$ and $y_1$, then 
\bdm \mathfrak{I}([x_0:y_0],[x_1,y_1])=\left(\frac{x_0y_1-x_1y_0}{d_1d_2}\right)\cdot R_S.\edm

In order to prove our theorems we begin by proving the following lemmas.

\begin{lemma}\label{finc} Let $n\geq 4$ be a fixed positive integer. There are only finitely many $n$-tuples \beq\label{genc}([0:1],[1:0],[1:1],[\lambda_3:\mu_3],\ldots,[\lambda_{n-1}:\mu_{n-1}])\in\po\eeq that are cycles for a rational map $\Psi$, defined over $K$, for which there exists an automorphism $[A]\in{\rm PGL}_2(K)$ of $\pro$ such that the rational map $[A]\circ \Psi\circ[A]^{-1}$ has good reduction outside $S$.
\end{lemma}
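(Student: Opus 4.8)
The plan is to exploit the ${\rm PGL}_2(K)$-invariance of the cross-ratio in order to transport good-reduction information from a conjugate of $\Psi$ back to the points $P_i$ themselves. First I would enlarge $S$ so that $R_S$ becomes a P.I.D. (Remark \ref{rpid}); this is harmless, since good reduction outside $S$ implies good reduction outside any larger set. If $\deg\Psi=1$ the assertion is elementary: then $\Psi$ is an automorphism, it is determined by the images of $P_0=[0:1]$, $P_1=[1:0]$, $P_2=[1:1]$, hence by the single point $P_3$, and requiring that $\Psi$ have order exactly $n$ is a non-trivial algebraic condition on $P_3$, leaving finitely many possibilities. So I may assume $\deg\Psi\geq2$. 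Fix a tuple as in (\ref{genc}) that is counted, together with a witnessing $\Psi$ and $[A]\in{\rm PGL}_2(K)$, and set $\Phi:=[A]\circ\Psi\circ[A]^{-1}$ (which has good reduction outside $S$ and degree $\geq2$) and $Q_i:=[A](P_i)$, so that $(Q_0,\dots,Q_{n-1})$ is a cycle for $\Phi$. Write $P_i=[\lambda_i:\mu_i]$ with $\lambda_i,\mu_i$ coprime $S$-integers; since the cycle points are distinct, $\lambda_i$, $\mu_i$ and $\mu_i-\lambda_i$ are all nonzero for $i\geq3$.

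Next I would record the identity, immediate from the definition (\ref{d_p}) of $\delta_{\mathfrak p}$, that for any four points $R_a,R_b,R_c,R_d\in\po$ and any prime $\mathfrak p$
\[ v_{\mathfrak p}\big((R_a,R_b;R_c,R_d)\big)=\delta_{\mathfrak p}(R_a,R_c)+\delta_{\mathfrak p}(R_b,R_d)-\delta_{\mathfrak p}(R_a,R_d)-\delta_{\mathfrak p}(R_b,R_c), \]
where $(R_a,R_b;R_c,R_d)$ denotes the cross-ratio (the normalizing minima in (\ref{d_p}) cancel). Since $[A]\in{\rm PGL}_2(K)$ and the cross-ratio is ${\rm PGL}_2(K)$-invariant, $(P_a,P_b;P_c,P_d)=(Q_a,Q_b;Q_c,Q_d)$. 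Put $\mathfrak J_m:=\mathfrak I(Q_0,Q_m)$. By \cite[Proposition 6.1]{M.S.2} applied to the $\Phi$-cycle $(Q_i)$ we have $\delta_{\mathfrak p}(Q_j,Q_{j+m})=\delta_{\mathfrak p}(Q_0,Q_m)$ for every $\mathfrak p\notin S$, and by \cite[Theorem 1]{C.1} we may write $\mathfrak J_m=\mathfrak J_1\cdot\mathfrak K_m$ with $\mathfrak K_m$ ranging in a finite set $\mathbb I_S$ depending only on $K$ and $S$ (here also $\mathfrak J_{n-m}=\mathfrak J_m$, by symmetry and Proposition 6.1). Hence for any four pairwise distinct indices $a,b,c,d\in\{0,\dots,n-1\}$ and any $\mathfrak p\notin S$, the valuation of $(Q_a,Q_b;Q_c,Q_d)$ is an integer combination of the $v_{\mathfrak p}(\mathfrak K_m)$'s: the four differences $c-a,d-b,d-a,c-b$ are all nonzero modulo $n$, so the uncontrolled factor $\mathfrak J_1$ appears with total exponent $1+1-1-1=0$ and cancels. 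Therefore such a cross-ratio, viewed in $K^{*}$, generates one of only finitely many fractional ideals of $R_S$, so it lies in $\mathcal F\cdot R_S^{*}$ for a finite set $\mathcal F\subset K^{*}$.

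I would then specialize to the four points $P_0=[0:1]$, $P_1=[1:0]$, $P_2=[1:1]$ and $P_i$ with $3\leq i\leq n-1$; since $n\geq4$ the indices $0,1,2,i$ are pairwise distinct modulo $n$. A short computation gives
\[ (P_0,P_1;P_2,P_i)=\frac{\mu_i}{\lambda_i}=:w_i,\qquad (P_0,P_2;P_1,P_i)=1-w_i. \]
By the previous step $w_i=g_1u_1$ and $1-w_i=g_2u_2$ for some $g_1,g_2$ in finite sets and $u_1,u_2\in R_S^{*}$, whence $g_1u_1+g_2u_2=1$. This two-term $S$-unit equation has no vanishing subsum, so for each of the finitely many pairs $(g_1,g_2)$ it has only finitely many solutions $(u_1,u_2)$; consequently $w_i$, and hence $P_i=[1:w_i]$, takes only finitely many values. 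As $i$ runs over the finite set $\{3,\dots,n-1\}$, only finitely many $n$-tuples as in (\ref{genc}) occur, which is the claim.

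The hard part is exactly the passage from $\Psi$ to $\Phi$: the conjugating automorphism $[A]$ lies in ${\rm PGL}_2(K)$ rather than ${\rm PGL}_2(R_S)$, so one has no control over the reduction of $[A]$ and hence none over the individual distances $\delta_{\mathfrak p}(P_i,P_j)$. The cross-ratio is the device that circumvents this, being a ${\rm PGL}_2(K)$-invariant assembled from the $\delta_{\mathfrak p}$'s, and the crucial point that makes the estimate close is that in the alternating combination of four $\delta_{\mathfrak p}$'s defining $v_{\mathfrak p}$ of a cross-ratio the single uncontrolled ideal $\mathfrak J_1=\mathfrak I(Q_0,Q_1)$ drops out (whereas individual distances would carry it). One also has to keep the combinatorial bookkeeping straight — the four chosen indices must be pairwise distinct modulo $n$ — which is precisely where the hypothesis $n\geq4$ is used.
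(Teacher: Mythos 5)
Your proof is correct, and it takes a genuinely different route from the paper's. The paper first normalizes the conjugating automorphism: using the transitivity of ${\rm PGL}_2(R_S)$ on $\po$ (Lemma \ref{Afix0}) it replaces $[A]$ by a lower-triangular matrix $\begin{pmatrix}\alpha&0\\\beta&\gamma\end{pmatrix}$, writes the conjugated cycle explicitly in the coordinates $\alpha,\beta,\gamma$, and then runs a chain of gcd/divisibility deductions --- using that each $\mathfrak{I}_i\mathfrak{I}_1^{-1}$ is an integral ideal together with \cite[Proposition 6.1]{M.S.2} to get $\gamma\in R_S^*$, and a further enlargement of $S$ (via \cite[Theorem 1]{C.1}) that trivializes all the quotients $\mathfrak{I}_i\mathfrak{I}_1^{-1}$ --- to conclude that $\lambda_i$, $\mu_i$ and $\mu_i-\lambda_i$ are $S$-units, after which the same two-term $S$-unit equation finishes the argument. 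You bypass the normalization of $[A]$ entirely by observing that the cross-ratio is a ${\rm PGL}_2(K)$-invariant whose $\mathfrak{p}$-adic valuation is an alternating sum of four logarithmic distances in which the single uncontrolled ideal $\mathfrak{I}_1$ cancels; this transports the good-reduction information from the $Q_i$ back to the $P_i$ in one coordinate-free step, at the mild cost of carrying finite sets of coefficients (coming from $\mathbb{I}_S$) into the $S$-unit equation rather than reducing to unit coefficients. The inputs are the same (\cite[Proposition 6.1]{M.S.2}, \cite[Theorem 1]{C.1}, the $S$-Unit Equation Theorem), and your bookkeeping --- pairwise distinctness of $0,1,2,i$ modulo $n$ and of the four index differences, which is exactly where $n\geq4$ enters --- is right. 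The one thing your argument does not deliver is the byproduct the paper extracts from its explicit computation, namely Lemma \ref{genA} (the normalized shape of $A$ with $\gamma=1$ and $\alpha$, $\beta+1$ coprime, and the invariance of ${\rm Disc}([A])$), which is reused later in the proofs of Lemma \ref{n=34} and Proposition \ref{n=3}; so if you adopted your proof in place of the paper's, that normalization would still have to be established separately.
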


\begin{proof}[Proof of Lemma \ref{finc}] Up to enlarging $S$, we can suppose that the ring $R_S$ is a principal ideal domain. Hence we choose $\lambda_i,\mu_i\in R_S$ coprime for all indices $3\leq i\leq (n-1)$.

Let $[A]\in{\rm PGL}_2(K)$ be an automorphism of $\pro$ such that the rational map $[A]\circ \Psi\circ[A]^{-1}$ has good reduction outside $S$. Since $R_S$ is a P.I.D., by Lemma \ref{Afix0}, we can suppose that the automorphism $[A]$ is induced by the matrix 
\bdm A=\begin{pmatrix}\alpha & 0\\ \beta & \gamma\end{pmatrix}\in{\rm GL}_2(K) \edm
for suitable $\alpha,\beta,\gamma\in R_S$ where $\min\{\vap(\alpha),\vap(\beta),\vap(\gamma)\}=0$. 

Clearly the $n$-tuple 
\bdm ([A]([0:1]),[A]([1:0]),[A]([1:1]),[A]([\lambda_3:\mu_3]),\ldots,[A]([\lambda_{n-1}:\mu_{n-1}]))\edm equals to 
\beq\label{Ac} ([0:1],[\alpha:\beta],[\alpha:\beta+\gamma], [\alpha\lambda_3:\beta\lambda_3+\gamma\mu_3],\ldots,[\alpha\lambda_{n-1}:\beta\lambda_{n-1}+\gamma\mu_{n-1}]),\eeq
which is a cycle for the rational map $[A]\circ \Psi\circ[A]^{-1}$. 

Let $d\in R_S$ be a greatest common divisor of $\alpha$ and $\beta$ so that the ideal $\mathfrak{I}_1$ of $R_S$ associated to the cycle (\ref{Ac}) is $(\alpha/d)\cdot R_S$.
Let $D\in R_S$ be a greatest common divisor of $\alpha$ and $\beta+\gamma$. The ideal $\mathfrak{I}_2$ of $R_S$ associated to the cycle (\ref{Ac}) is $(\alpha/D)\cdot R_S$. As seen before, $\mathfrak{I}_i\cdot\mathfrak{I}_1^{-1}$ is an ideal of $R_S$, hence $\alpha/d$ divides $\alpha/D$. Hence we deduce that $D\mid d$, therefore $D\mid \alpha$ and $D\mid \beta$. From $D\mid (\beta+\gamma)$ it follows that $D\mid\gamma$. By the assumption that $\min\{\vap(\alpha),\vap(\beta),\vap(\gamma)\}=0$ we have that $D\in R_S^*$. 
By applying \cite[Proposition  6.1]{M.S.2} to the cycle (\ref{Ac}), since $D\in R_S^*$, we see that 
\bdm \frac{\alpha}{d}\cdot R_S=\mathfrak{I}([0:1],[\alpha:\beta])=\mathfrak{I}_1=\mathfrak{I}([\alpha:\beta],[\alpha:\beta+\gamma])=\frac{\alpha\gamma}{d}\cdot R_S\edm from which  we deduce that $\gamma\in R_S^*$. Therefore by replacing $\gamma$ by $1$, $\alpha/\gamma$ by $\alpha$ and $\beta /\gamma$ by $\beta$ in (\ref{Ac}), we can rewrite the cycle (\ref{Ac}) as
\bdm ([0:1],[\alpha:\beta],[\alpha:\beta+1], [\alpha\lambda_3:\beta\lambda_3+\mu_3],\ldots,[\alpha\lambda_{n-1}:\beta\lambda_{n-1}+\mu_{n-1}]).\edm

By \cite[Theorem 1]{C.1} we can enlarge the set $S$, by adding only finitely many places, so that every ideal $\mathfrak{I}_i\cdot\mathfrak{I}_1^{-1}$, associated to a cycle for a rational map with good reduction outside $S$, is equal to $R_S$ and $R_S$ is a P.I.D.. Therefore, since $\alpha$ and $\beta+1$ are coprime, by considering the points $[0:1],[\alpha:\beta]$ and $[\alpha:\beta+1]$ we have that $\mathfrak{I}_{2}\cdot\mathfrak{I}_1^{-1}=d\cdot R_S$, thus, $d$ is a $S$-unit.

For an arbitrary index $3\leq i\leq n-1$, let $t$ be a greatest common divisor between the integers $\alpha\lambda_i$ and $\beta\lambda_i+\mu_i$. By considering the points $[\alpha:\beta]$ and $[\alpha\lambda_i,\beta\lambda_i+\mu_i]$ we deduce that $\mathfrak{I}_{i-1}\cdot\mathfrak{I}_1^{-1}=({\mu_i}/{t})\cdot R_S$, hence 
\beq\label{mu}\frac{\mu_i}{t}\in R_S^*.\eeq


By considering the points $[0:1]$ and $[\alpha\lambda_i,\beta\lambda_i+\mu_i]$ we deduce that $\mathfrak{I}_{i}\mathfrak{I}_1^{-1}=(d\lambda_i/t)\cdot R_S=(\lambda_i/t)\cdot R_S$, hence
\beq\label{lambda}\frac{\lambda_i}{t}\in R_S^*.\eeq
Since $\lambda_i$ and $\mu_i$ are coprime, by (\ref{mu}) and (\ref{lambda}), we deduce that 
\beq\label{tml}t,\lambda_i,\mu_i\in R_S^*\eeq

Now consider the points $[\alpha:\beta+1]$ and $[\alpha\lambda_i,\beta\lambda_i+\mu_i]$; since the ideal $\mathfrak{I}_{ 1  }=\alpha\cdot R_S$ (by our above assumption on $S$), it is easy to see that $\mu_i-\lambda_i\in R_S^*$. Therefore, by (\ref{tml}) and the $S$-Unit Equation Theorem, there are only finitely many possibilities for the the point $[\lambda_i:\mu_i]$.
\end{proof}
Actually the previous proof also gives the following:
\begin{lemma}\label{genA}Let $R_S$ be a P.I.D.. Let $\Psi$ be a quadratic rational map, defined over $K$, which admits a cycle as in (\ref{genc}) for suitable values of $\lambda_i,\mu_i\in R_S$ for all indices $i\in\{3,\ldots,n-1\}$. If there exists an automorphism $[B]\in {\rm PGL}_2(K)$ such that the map $[B]\circ\Psi\circ[B]^{-1}$ has good reduction outside $S$, then there exist two $S$-integers $\alpha$ and $\beta$ such that $\alpha$ and $\beta+1$ are coprime and putting 
\bdm A=\begin{pmatrix}\alpha & 0\\ \beta &1\end{pmatrix}\edm 
the rational map $[A]\circ\Psi\circ[A]^{-1}$ has good reduction outside $S$. Furthermore the two ideals ${\rm Disc}([B]), {\rm Disc}([A])$ of  $R_S$ are equal.
\end{lemma}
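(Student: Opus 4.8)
The statement is essentially already contained in the proof of Lemma \ref{finc}; the plan is to isolate the portion of that argument that does not enlarge the set $S$ (here $S$ must be kept fixed), and then to add one short local computation for the assertion on discriminants.

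First I would apply Lemma \ref{Afix0} to the automorphism $[B]$: since $R_S$ is a P.I.D.\ and $[B]\circ\Psi\circ[B]^{-1}$ has good reduction outside $S$, there is $[A_0]=[C_0]\circ[B]$ with $[C_0]\in{\rm PGL}_2(R_S)$ such that $[A_0]$ fixes $[0:1]$ and $[A_0]\circ\Psi\circ[A_0]^{-1}$ has good reduction outside $S$. Fixing $[0:1]$ forces a matrix representative $A_0=\begin{pmatrix}\alpha&0\\\beta&\gamma\end{pmatrix}$, and after scaling (allowed because $R_S$ is a P.I.D.) I may take $\alpha,\beta,\gamma\in R_S$ with $\min\{\vap(\alpha),\vap(\beta),\vap(\gamma)\}=0$ for every $\p\notin S$; note $\alpha\gamma=\det A_0\neq0$.

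Next I would run verbatim the opening of the proof of Lemma \ref{finc} on the image of (\ref{genc}) under $[A_0]$, namely the cycle (\ref{Ac}) for $[A_0]\circ\Psi\circ[A_0]^{-1}$, using only its first three points $[0:1],[\alpha:\beta],[\alpha:\beta+\gamma]$. The fact that $\mathfrak{I}_2\mathfrak{I}_1^{-1}$ is an ideal of $R_S$ forces $D:=\gcd(\alpha,\beta+\gamma)$ to divide $\gcd(\alpha,\beta)$, hence $D$ divides each of $\alpha,\beta,\gamma$ and so $D\in R_S^*$; then \cite[Proposition 6.1]{M.S.2} applied to the cycle yields $\mathfrak{I}([0:1],[\alpha:\beta])=\mathfrak{I}([\alpha:\beta],[\alpha:\beta+\gamma])$, which, $D$ being a unit, says $(\alpha/\gcd(\alpha,\beta))R_S=(\alpha\gamma/\gcd(\alpha,\beta))R_S$, whence $\gamma\in R_S^*$. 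No step here enlarges $S$. Replacing $A_0$ by the projectively equal matrix $\gamma^{-1}A_0=\begin{pmatrix}\alpha/\gamma&0\\\beta/\gamma&1\end{pmatrix}$ and renaming its entries $\alpha,\beta$ (still in $R_S$), one has $\beta+1=(\beta_{\mathrm{old}}+\gamma)/\gamma$, so $\gcd(\alpha,\beta+1)=D/\gamma\in R_S^*$, i.e.\ $\alpha$ and $\beta+1$ are coprime; and since $[A]=[A_0]$, the map $[A]\circ\Psi\circ[A]^{-1}$ has good reduction outside $S$. This is the matrix $A$ in the statement.

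It remains to prove ${\rm Disc}([A])={\rm Disc}([B])$. The discriminant of a degree-$1$ map depends only on its ${\rm PGL}_2$-class, so ${\rm Disc}([A])={\rm Disc}([A_0])={\rm Disc}([C_0]\circ[B])$, and it is enough to show that left composition with an element of ${\rm PGL}_2(R_S)$ leaves unchanged the discriminant ideal of a degree-$1$ map. For $\p\notin S$, a representative of $[C_0]$ lies in ${\rm GL}_2(R_\p)$ (entries in $R_\p$, determinant a $\p$-unit), and hence so does its inverse; thus for any invertible matrix $M$ the entries of $C_0M$ and those of $M=C_0^{-1}(C_0M)$ are mutual $R_\p$-linear combinations, so $\min_{\text{entries}}\vap(C_0M)=\min_{\text{entries}}\vap(M)$, while $\vap(\det(C_0M))=\vap(\det M)$. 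From $\vap({\rm Disc}([M]))=\vap(\det M)-2\min_{\text{entries}}\vap(M)$ we get ${\rm Disc}([C_0M])={\rm Disc}([M])$, and $M=B$ gives the claim. The only point needing care is that each step borrowed from the proof of Lemma \ref{finc} is one carried out \emph{before} that proof enlarges $S$; the single new ingredient is the last local computation on discriminants, which is routine.
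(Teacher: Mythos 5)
Your proposal is correct and matches the paper's intent exactly: the paper proves Lemma \ref{genA} simply by remarking that "the previous proof also gives" it, i.e.\ by the portion of the proof of Lemma \ref{finc} (Lemma \ref{Afix0}, the divisibility argument showing $D\in R_S^*$ and $\gamma\in R_S^*$ via \cite[Proposition 6.1]{M.S.2}, and the rescaling by $\gamma^{-1}$) that precedes any enlargement of $S$, which is precisely what you isolate. Your explicit local computation showing that left composition with an element of ${\rm PGL}_2(R_S)$ preserves the discriminant ideal is the routine verification of the "furthermore" clause that the paper leaves implicit, and it is correct.
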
 
The hypotheses in Lemma \ref{genA} are the same as in Lemma \ref{Afix0} with an additional condition on cycles, so we obtain a refined conclusion. 

Lemma \ref{finc} and the following Lemma \ref{fin5} will be used to prove Theorem \ref{tnf} in the case of cycles of length $\geq 5$.
\begin{lemma}\label{fin5}Let $F$ be a field. Let $n\geq 5$. Given $n-3$ fixed distinct elements $\lambda_3,\ldots,\lambda_{n-1}\in F^*$, there exists at most one endomorphism $\Psi$ of $\pro$, of degree two, defined over $F$, such that
\beq\label{c5} [0:1]\stackrel{\Psi}{\mapsto}[1:0]\stackrel{\Psi}{\mapsto}[1:1]\stackrel{\Psi}{\mapsto}[\lambda_3:1]\stackrel{\Psi}{\mapsto}[\lambda_4:1]\stackrel{\Psi}{\mapsto}\ldots\stackrel{\Psi}{\mapsto}[\lambda_{n-1}:1]\stackrel{\Psi}{\mapsto}[0:1].\eeq
\end{lemma}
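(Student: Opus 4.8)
The plan is to count degrees of freedom. A quadratic endomorphism $\Psi$ of $\pro$ is given by $\Psi([X:Y])=[F(X,Y):G(X,Y)]$ with $F,G$ binary quadratic forms, so $\Psi$ depends on $6$ coefficients, hence on $5$ parameters in $\mathbb{P}_5(F)$ (modulo scaling), subject to the open condition $\mathrm{Res}(F,G)\neq 0$. Each prescribed value $\Psi(P)=Q$ in the chain (\ref{c5}) imposes the condition that the vector $(F(P),G(P))$ be proportional to the coordinate vector of $Q$, i.e.\ one linear equation on the coefficients of $(F,G)$ (the vanishing of the relevant $2\times 2$ determinant). The chain (\ref{c5}) lists $n-1$ arrows, namely $[0:1]\mapsto[1:0]$, $[1:0]\mapsto[1:1]$, $[1:1]\mapsto[\lambda_3:1]$, then $[\lambda_i:1]\mapsto[\lambda_{i+1}:1]$ for $3\le i\le n-2$, and finally $[\lambda_{n-1}:1]\mapsto[0:1]$; since $n\ge 5$ this is at least $4$ conditions, actually $n-1\ge 4$ conditions. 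So I would first check these linear conditions are independent; generically $n-1\ge 5$ conditions cut the $5$-dimensional space down to a point, but when $n=5$ we get exactly $4$ conditions and need to argue more carefully.

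The cleanest route avoids case analysis on $n$ and exploits the special normalization of the first three points. First I would write out what the first four constraints say explicitly. From $[0:1]\mapsto[1:0]$: writing $F=f_0X^2+f_1XY+f_2Y^2$, $G=g_0X^2+g_1XY+g_2Y^2$, evaluating at $[0:1]$ gives $(f_2:g_2)=(1:0)$, so $g_2=0$ and $f_2\neq 0$; normalize $f_2=1$. From $[1:0]\mapsto[1:1]$: evaluating at $[1:0]$ gives $(f_0:g_0)=(1:1)$, so $f_0=g_0$; since $G$ is not identically zero and $g_2=0$ we may rescale so $f_0=g_0=1$ (if $f_0=g_0=0$ then $G=g_1XY$ and one checks the remaining conditions force degeneracy). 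From $[1:1]\mapsto[\lambda_3:1]$: evaluating at $[1:1]$, $F(1,1)=2+f_1$, $G(1,1)=1+g_1$, so $2+f_1=\lambda_3(1+g_1)$. So after normalization $\Psi$ depends on just the two unknowns $f_1,g_1$, tied by one linear relation; i.e.\ a one-parameter family, parametrized say by $g_1$, with $f_1=\lambda_3(1+g_1)-2$. Now I would impose the next arrow $[\lambda_3:1]\mapsto[\lambda_4:1]$ (this uses $n\ge 5$): $F(\lambda_3,1)=\lambda_4\,G(\lambda_3,1)$ becomes $\lambda_3^2+f_1\lambda_3+1=\lambda_4(\lambda_3^2+g_1\lambda_3)$, which together with $f_1=\lambda_3(1+g_1)-2$ is a \emph{linear} equation in the single unknown $g_1$. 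It has at most one solution unless its leading coefficient in $g_1$ vanishes; I would check that the $g_1$-coefficient is $\lambda_3(\lambda_3-\lambda_4)\neq 0$, using $\lambda_3\neq\lambda_4$ and $\lambda_3\neq 0$. Hence $g_1$, and therefore $\Psi$, is uniquely determined. The remaining arrows in (\ref{c5}) then either are automatically satisfied or impose further constraints, but in any case cannot produce a second map; and one must still verify $\mathrm{Res}(F,G)\neq 0$ for the candidate, i.e.\ that this unique $(F,G)$ genuinely defines a degree-two morphism — but the statement only claims ``at most one endomorphism,'' so non-existence is acceptable and this verification is not needed for the conclusion.

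The main obstacle is bookkeeping the degenerate sub-cases: when some of the normalizing coordinates one wants to divide by vanish (e.g.\ $f_0=g_0=0$, or a leading coefficient dropping out), one must check that such a $\Psi$ either fails to have degree two or fails one of the prescribed conditions. The key inputs that rescue every such branch are the standing hypotheses that the $\lambda_i$ are \emph{nonzero} and \emph{pairwise distinct} (only distinctness of consecutive ones, $\lambda_3\neq\lambda_4$, and $\lambda_3\neq 0$, are actually used in the crucial step), together with $\Psi$ being a genuine morphism so that $F$ and $G$ have no common factor. I would organize the argument so that after the three normalizations the family is visibly $1$-dimensional, and then a single well-chosen additional arrow pins down the last parameter linearly with nonvanishing leading term; that keeps the computation short and uniform in $n$.
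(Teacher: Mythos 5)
Your argument has a genuine gap: you spend the single projective scaling freedom twice. After the first arrow you normalize $f_2=1$; after the second you normalize $f_0=g_0=1$. But $(F,G)$ is only defined up to \emph{one} common scalar, so imposing both $f_2=1$ and $f_0=1$ silently adds the extra relation $f_0=f_2$, which is not a consequence of the hypotheses. Done correctly (say, with $f_0=g_0=1$, $g_2=0$), the coefficient $f_2$ remains a free unknown, so after the third arrow you have a \emph{two}-parameter family $(f_1,f_2,g_1)$ tied by one linear relation, and adding the fourth arrow still leaves a one-parameter family. This is not an artifact: for $\lambda_3=2$, $\lambda_4=3$ the two conditions $f_1+f_2-2g_1=1$ and $2f_1+f_2-6g_1=8$ have the line of solutions $f_1=7+4t$, $f_2=-6-2t$, $g_1=t$, giving infinitely many genuine quadratic morphisms realizing the first four arrows. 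So your claim that $\Psi$ is already pinned down before the remaining arrows is false; the fifth interpolation condition is essential, and your dismissal of the remaining arrows as ``automatically satisfied or cannot produce a second map'' is exactly the point that needs proof. Worse, once you do add the fifth linear condition, the resulting $3\times 3$ system can be singular for special values of the $\lambda_i$ (its determinant is $\lambda_3(2\lambda_4-\lambda_4^2-\lambda_3)$ in the $n=5$ closing-arrow case), so the purely linear-algebraic route does not close without a further argument.

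The paper avoids all of this bookkeeping: it observes that the cycle supplies five distinct points at which any two candidate maps must agree, and that two degree-two rational functions agreeing at five distinct points coincide, since their difference has degree at most $4$ (equivalently, in your homogeneous notation, $F_1G_2-F_2G_1$ is a binary form of degree $4$ with five distinct roots, hence identically zero, whence $[F_1:G_1]=[F_2:G_2]$ because each pair is coprime). If you want to salvage your computation, replace the endgame by this cross-determinant argument; it handles the degenerate linear systems and is uniform in $n$.
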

\begin{proof} With every rational map $\Psi\colon \pro \to \pro$, defined over $F$, we associate in the canonical way a rational function $\psi\in F(z)$. By a change of coordinate system of $\pro$, we can assume that the cycle of $\psi$ associated to (\ref{c5}) has the first five points $z_1,\ldots,z_5$ in $F$. Therefore we have to prove that, given five distinct points $z_1,\ldots,z_5$ in $F$, if two rational functions $f, g\in F(z)$, of degree two, are such that $f(z_i)=g(z_i)$ for all indices $1\leq i \leq 5$, then they are equal. The rational function $f-g$ has degree at most $4$ and five distinct roots in $F$, thus $f\equiv g$.
\end{proof}

\section{Proof of Theorem \ref{tnf}.}

This section is dedicated to prove Theorem \ref{tnf}. Hence we are studying the case of periodic points of exact period $n\geq4$. From the previous lemmas, it is easy to see that we have to prove Theorem \ref{tnf} only in the case $n=4$. The next lemma will be useful also for the case $n=3$.

 \begin{lemma}\label{n=34}Let $\lambda, a,b,c\in R_S$, with $\min \{\vap(a),\vap(b),\vap(c)\}=0$ for every prime ideal $\p\notin S$, such that 
\begin{itemize}
\item[1)] the rational map 
\bdm\Psi([X:Y])=[(X-\lambda Y)(aX+bY):X(aX+cY)]\edm
has degree two,
\item[2)] the point $[0:1]$ is a periodic point of exact period $3$ or $4$ for $\Psi$, and
\item[3)] there exists an automorphism $[A]\in{\rm PGL}_2(K)$ such that the rational map $[A]\circ\Psi\circ[A]^{-1}$ has good reduction outside $S$.
\end{itemize}
Then 
\beq\label{png}a,b,\lambda, a\lambda+c\in \ri^*.\eeq
\end{lemma}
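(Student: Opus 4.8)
The plan is to track the cycle of $[0:1]$ explicitly and apply the divisibility machinery together with the $S$-unit equation theorem. First I would compute the first few iterates of $[0:1]$ under $\Psi$. Since $\Psi([0:1]) = [(-\lambda Y)(bY) : 0\cdot(\cdots)] $, evaluated at $[X:Y]=[0:1]$ gives $\Psi([0:1]) = [-\lambda b : 0] = [1:0]$ (this forces $\lambda b \neq 0$, i.e. $\lambda, b \neq 0$, for the orbit to be nontrivial). Then $\Psi([1:0]) = [1\cdot a : 1\cdot a] = [a:a] = [1:1]$ (so $a\neq 0$). Then $\Psi([1:1]) = [(1-\lambda)(a+b) : (a+c)]$. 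Thus the cycle begins $[0:1]\mapsto[1:0]\mapsto[1:1]\mapsto[P_3]\mapsto\cdots$, and in the period-$3$ case we need $\Psi([1:1])=[0:1]$, which forces $(1-\lambda)(a+b)=0$; since later steps will show $a+b$ is a unit (hence nonzero), we get $\lambda = 1$ in that case — wait, that would contradict $\lambda\in R_S^*$ being the useful content, so more likely $a+b$ need not vanish and instead $\lambda=1$; I should be careful here and in fact the period-$3$ analysis may route through the point $P_3$ differently. In any case, the key structural facts are $a,b,\lambda\neq 0$ from the very first iterates.

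Next I would invoke Lemma \ref{genA} (or Lemma \ref{Afix0} after enlarging $S$ to a P.I.D., which is harmless by Remark \ref{rpid}): there is a conjugating matrix $A = \begin{pmatrix}\alpha & 0\\ \beta & 1\end{pmatrix}$ with $\alpha$ and $\beta+1$ coprime $S$-integers such that $[A]\circ\Psi\circ[A]^{-1}$ has good reduction outside $S$. Applying $[A]$ to the cycle $([0:1],[1:0],[1:1],[P_3],\dots)$ produces the cycle $([0:1],[\alpha:\beta],[\alpha:\beta+1],[A](P_3),\dots)$ for the good-reduction map. Now I run exactly the divisibility argument from the proof of Lemma \ref{finc}: using \cite[Proposition 6.1]{M.S.2} and \cite[Theorem 1]{C.1} (and the enlargement of $S$ that trivializes all the ideals $\mathfrak{I}_i\mathfrak{I}_1^{-1}$), comparison of the $\mathfrak{p}$-adic logarithmic distances among $[0:1]$, $[\alpha:\beta]$, $[\alpha:\beta+1]$ and $[A](P_3)$ will pin down that the relevant ``denominators'' appearing in the coordinates of $P_3$ — which are built from $a,b,c,\lambda$ — are $S$-units. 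Concretely, $P_3 = [(1-\lambda)(a+b):a+c]$, and the $\delta_\mathfrak{p}$-computations should yield that $a+c$, and the numerator factors, are coprime to the bad primes, forcing $a+c \in R_S^*$ (note $a\lambda + c = (a+c) + a(\lambda-1)$, so I may instead get $a\lambda+c$ directly depending on which coordinates I normalize) and similarly $a+b\in R_S^*$, $\lambda\in R_S^*$, $a\in R_S^*$, $b\in R_S^*$.

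The cleanest route to the four conclusions $a,b,\lambda,a\lambda+c\in R_S^*$ is then: (i) from the orbit $[0:1]\to[1:0]\to[1:1]$ one already has good-reduction constraints forcing $a$ and $b$ and $\lambda$ to be $S$-units — this is essentially the $D,\gamma\in R_S^*$ step in Lemma \ref{finc} translated back through $[A]^{-1}$; (ii) for $a\lambda+c$, I would look at the distance $\delta_\mathfrak{p}$ between $P_2 = [1:1]$ (or its image $[\alpha:\beta+1]$) and $P_3$, together with Proposition 6.1 applied at the right pair, to extract that the resultant-type quantity $a\lambda+c$ (which is, up to units, the relevant cross term when one writes $\Psi$ in the normalized coordinates and demands good reduction) is a unit; (iii) finally feed the unit relations into the two-variable $S$-unit equation — e.g. an identity like $(a+b) - a - b = 0$ rescaled, or $1 - \lambda + (\lambda\text{-term}) = \cdots$ — only if needed to rule out degenerate cancellations, though for this lemma I expect the pure divisibility bookkeeping suffices without yet invoking finiteness.

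The main obstacle I anticipate is the bookkeeping in step (ii): unlike in Lemma \ref{finc}, where the points $[1:0],[1:1]$ were already in normalized position, here the map $\Psi$ has the three free parameters $a,b,c$ and the point $\lambda$, so I must carefully express $P_3$ (and, in the period-$4$ case, $P_4 = \Psi(P_3)$) in coprime $S$-integer coordinates, identify which polynomial in $a,b,c,\lambda$ plays the role of the ``$\mu_i$'' and which plays ``$\lambda_i$'' in the Lemma \ref{finc} argument, and verify that the good-reduction hypothesis via Definition \ref{gred} (i.e. $v_\mathfrak{p}(\mathrm{Disc}\,\Psi')=0$ for the conjugate $\Psi'$) forces $a\lambda+c$ rather than some other combination to be the unit. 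The identity $a\lambda+c = (\text{coefficient governing whether } \Psi \text{ has degree two, interacting with } \mathrm{Res})$ should be the linchpin, and I would isolate it by directly computing $\mathrm{Res}(F,G)$ for $F=(X-\lambda Y)(aX+bY)$, $G=X(aX+cY)$ and reading off its factorization — a routine but delicate resultant computation — then matching factors against the $\delta_\mathfrak{p}$ contributions of the cycle points.
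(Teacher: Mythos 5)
Your setup (the explicit orbit $[0:1]\mapsto[1:0]\mapsto[1:1]\mapsto[(1-\lambda)(a+b):a+c]$, the normal form of $A$ from Lemma \ref{genA}, and the factorization of the resultant) matches the paper's, but there is a genuine gap at the heart of the argument. The conclusions $a,b,a\lambda+c\in R_S^*$ are statements about the \emph{coefficients} of $\Psi$, not about its cycle points, and the $\delta_{\mathfrak{p}}$-distance machinery of Lemma \ref{finc} (via \cite[Proposition 6.1]{M.S.2} and \cite[Theorem 1]{C.1}) only constrains the cycle points and the entries of the conjugating matrix: since the cycle here is the fixed configuration $[0:1],[1:0],[1:1]$ (plus $[\lambda:1]$ in the period-$4$ case), that route can at best produce $\lambda,\lambda-1\in R_S^*$, never $a$, $b$ or $a\lambda+c$ --- infinitely many maps with non-unit $a$ or $b$ admit this very $3$-cycle. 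Your fallback, factoring $\mathrm{Res}(F_\Psi,G_\Psi)=\pm ab\lambda(a\lambda+c)(c-b)$, is the right object, but good reduction is a hypothesis on the conjugate $\Phi=[A]\circ\Psi\circ[A]^{-1}$; Definition \ref{gred} reads $v_{\mathfrak{p}}(\mathrm{Res}(F_\Phi,G_\Phi))=4\min\{v_{\mathfrak{p}}(F_\Phi),v_{\mathfrak{p}}(G_\Phi)\}$, and since $\mathrm{Res}(F_\Phi,G_\Phi)=\alpha^6\,\mathrm{Res}(F_\Psi,G_\Psi)$ this only yields $v_{\mathfrak{p}}(ab\lambda(a\lambda+c)(c-b))=4m_{\mathfrak{p}}-6v_{\mathfrak{p}}(\alpha)$ with $m_{\mathfrak{p}}$ the content of the composed forms. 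Without an exact evaluation of $m_{\mathfrak{p}}$, and without separating the five factors, nothing is forced to be a unit.

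That content analysis is precisely the body of the paper's proof, and it is not routine bookkeeping. One writes $F_\Phi,G_\Phi$ explicitly as in (\ref{f2})--(\ref{g2}), observes that $\alpha^2$, then $b$, then $\lambda$, then any common divisor of $a$ and $\alpha$, all divide the $Y^2$-coefficients and hence (by good reduction) every coefficient, and reads $b,\lambda\in R_S^*$ and $\gcd(a,\alpha)=1$ off the $XY$-coefficients. The one step with real content is proving $\alpha$ and $1+\beta\lambda$ coprime (statement (\ref{ccop})): in the period-$4$ case this needs the cycle relation $(1-2\lambda)a-\lambda c=\lambda-1$ fed into a congruence computation modulo a hypothetical common prime, ending in the contradiction $(\lambda-1)^2\equiv 0$ against $(\lambda^2-\lambda)a\equiv 1$. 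Only then do the factorizations (\ref{f5})--(\ref{g5}) have $S$-integral factors, so that the two $2\times2$ determinants $a(c-1)/\alpha$ and $a\lambda+c$ --- your resultant factors, suitably localized --- must divide every coefficient, giving $a\in R_S^*$ and, through the chain $a\lambda+c\mid\alpha$, $a\lambda+c\mid c-1$, $a\lambda+c\mid 1+\beta\lambda$, finally $a\lambda+c\in R_S^*$. None of this appears in your sketch, and the mechanism you do spell out (transporting the Lemma \ref{finc} argument through $[A]^{-1}$) cannot supply it.
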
   
\begin{proof}  Note that if $R_S$ is not a P.I.D., it is sufficient to prove Lemma \ref{n=34} for an arbitrary enlarged set $\ds$ such that $R_{\ds}$ is a P.I.D.. Indeed by Remark \ref{rpid} we may consider two enlarged sets $\ds_1$ and $\ds_2$ such that $R_{\ds_1}$ and $R_{\ds_2}$ are P.I.D. and $\ds_1\cap\ds_2=S$. If Lemma \ref{n=34} holds for $R_{\ds_1}$ and $R_{\ds_2}$ then Lemma \ref{n=34} will be true also for $S$, since $R_{\ds_1}^*\cap R_{\ds_2}^*=R_S^*$. Therefore, by enlarging the set $S$ if necessary, we can suppose that $R_S$ is a P.I.D..

By condition 1) we have that $\lambda\neq 0, a\neq 0, b\neq 0, b\neq c$ and $a\lambda\neq -c$.
If $\lambda=1$ then  $([0:1],[1:0],[1:1])$    is a cycle for $\Psi$; otherwise, by condition 2),   $([0:1],[1:0],[1:1],[\lambda:1])$    is a cycle for $\Psi$ so that

\beq\label{cc4p}[\lambda:1]=[(1-\lambda)(a+b):(a+c)].\eeq

By Lemma \ref{genA} we can suppose that the automorphism $[A]$ is induced by the matrix  
\bdm A=\begin{pmatrix}\alpha & 0\\\beta & 1\end{pmatrix}\edm  for suitable $ S$-integers $\alpha$ and $\beta$ such that $\alpha$ and $\beta+1$ are coprime $S$-integers.
Clearly the automorphism $[A]^{-1}$ is induced by the matrix
\bdm A^{-1}=\begin{pmatrix}1 & 0\\-\beta & \alpha\end{pmatrix}.\edm  We set
\beq\label{defPhi} \Phi([X:Y])\coloneqq[A]\circ \Psi\circ [A]^{-1}=[F(X,Y): G(X,Y)  ]\eeq 
where 
\begin{align}\label{f1}&F(X,Y)=\alpha((1+\beta\lambda)X-\alpha\lambda Y)((a-b\beta)X+\alpha bY),\\\label{g1}&G(X,Y)=\beta((1+\beta\lambda)X-\alpha\lambda Y)((a-b\beta)X+\alpha bY)+X((a-c\beta)X+\alpha cY).\end{align}
We rewrite the polynomials $F,G$ in a way useful for the sequel:
\begin{align}\label{f2}&\text{\small $F(X,Y)=\alpha(1+\beta\lambda)(a-b\beta)X^2+\alpha^2(2b\beta\lambda+b-a\lambda)XY-\alpha^3b\lambda Y^2$},\\\label{g2}&\text{\small $G(X,Y)=[\beta(1+\beta\lambda)(a-b\beta)+a-c\beta]X^2+\alpha(2\beta^2b\lambda +\beta b-\beta a\lambda+c)XY-\alpha^2 b\beta\lambda Y^2$}.\end{align}
Note that the $ S$-integer $\alpha^2$ divides both the coefficients of the monomial in $Y^2$ of $F$ and $G$. Since $\Phi$ has good reduction outside $ S$ then $\alpha^2$ divides all the coefficients of the polynomials $F$ and $G$ as represented in (\ref{f2}) and (\ref{g2}). Hence we can choose
\begin{align}\label{f3}&\text{\small $F(X,Y)= \frac{(1+\beta\lambda)(a-b\beta)}{\alpha}X^2+(2b\beta\lambda+b-a\lambda)XY-\alpha b\lambda Y^2$},\\\label{g3}&\text{\small $G(X,Y)=\frac{[\beta(1+\beta\lambda)(a-b\beta)+a-c\beta]}{\alpha^2}X^2+\frac{(2\beta^2 b\lambda+\beta b-\beta a\lambda+c)}{\alpha}XY-b\beta\lambda Y^2$},\end{align} where the coefficients of all monomials in (\ref{f3}) and (\ref{g3}) are in $\ri$ and (\ref{defPhi}) still holds.

Now it is easy to prove that $b\in\ri^*$. Indeed, from  the argument above concerning $\alpha$, it follows that the $ S$-integer $b$ divides the coefficients of the polynomials $F$ and $G$ as represented in (\ref{f3}) and (\ref{g3}). By considering the coefficient of the monomial in $XY$ in (\ref{f3}) we see that $b|a$ and from the coefficient of $XY$ in (\ref{g3}) we have $b|c$. By the assumption that $\min \{\vap(a),\vap(b),\vap(c)\}=0$ for every prime ideal $\p\notin S$, we deduce that $b\in\ri^*$. To ease notation we replace $a$ with $a/b$, $b$ with 1 and $c$ with $c/b$ in (\ref{f3}) and (\ref{g3}) obtaining that
\begin{align}\label{f4}&\text{\small $F(X,Y)=\dfrac{(1+\beta\lambda)(a-\beta)}{\alpha}X^2+(2\beta\lambda+1-a\lambda)XY- \alpha\lambda Y^2$},\\\label{g4}&\text{\small $G(X,Y)=\dfrac{[\beta(1+\beta\lambda)(a-\beta)+a-c\beta]}{\alpha^2}X^2+\dfrac{(2\beta^2\lambda +\beta -\beta a\lambda+c)}{\alpha}XY-\beta\lambda Y^2$}\end{align}
where, clearly, the equality (\ref{defPhi}) still holds and the polynomials in (\ref{f4}) and (\ref{g4}) still have coefficients in  $\ri$. Therefore we have that
\begin{align}\label{N2}&\dfrac{(1+\beta\lambda)(a-\beta)}{\alpha}\in\ri,\\
\label{D2}&\dfrac{\beta(1+\beta\lambda)(a-\beta)+a-c\beta}{\alpha^2}\in\ri,\\
\label{D1}&\dfrac{2\beta^2\lambda +\beta -\beta a\lambda+c}{\alpha}\in\ri.\end{align}
 Moreover, again by good reduction of $\Phi$ outside $S$, we see that $\lambda$ is a $S$-unit. Indeed, note that $\lambda$ divides the coefficient of the monomial $Y^2$ in both (\ref{f4}) and (\ref{g4}). Hence $\lambda$ has to divide $2\beta\lambda+1-a\lambda$ (which is the coefficient of monomial $XY$ in (\ref{f4})), so $\lambda$ is a $S$-unit.  

The next step is to prove that $a\in\ri^*$. First we shall see that $\alpha$ and $1+\beta\lambda$ are coprime. After that we shall prove that $a$ and $\alpha$ are coprime and that $a\in\ri^*$. 

 Let $\lambda=1$. By the assumption on $A$ (see Lemma \ref{genA}) we have that the $S$-integers $\alpha$ and $1+\beta\lambda=1+\beta$  are coprime. Let $\lambda\neq1$, i.e. we are in the case of cycle of length $4$. Equation (\ref{cc4p}) gives
\beq\label{cc4}(1-2\lambda)a-\lambda c=\lambda-1.\eeq  
Suppose that there exists a non zero prime ideal $\p$ which divides $\alpha$ and $1+\beta\lambda$ so that 
\beq\label{ap}\beta\lambda\equiv-1 \ \ \ \text{(mod $\p$)}.\eeq 
By (\ref{N2}) and (\ref{D2}) $\p$ divides $a-c\beta$, thus
\beq\label{acp}  a\lambda+c\equiv 0\ \ \ \text{(mod $\p$)}.\eeq
By (\ref{D1}) and (\ref{ap})
\beq\label{betac}-\beta+a+c\equiv 0 \ \ \ \text{(mod $\p$)}.\eeq 
Now, by (\ref{acp}) and (\ref{betac})
\bdm 0\equiv-\beta+a+c\equiv-\beta\lambda+a\lambda +c\lambda\equiv 1+a\lambda-a\lambda^2 \ \ \ \text{(mod $\p$)}\edm
thus
\beq\label{p1}(\lambda^2-\lambda)a\equiv 1\ \ \ \text{(mod $\p$)}.\eeq 
By (\ref{acp}), the equality (\ref{cc4}) gives
\bdm (\lambda-1)^2a\equiv\lambda-1\ \ \ \text{(mod $\p$)}.\edm
From this by (\ref{p1}) we deduce that
\bdm \lambda(\lambda-1)\equiv\lambda(\lambda-1)^2a\equiv\lambda-1\ \ \ \text{(mod $\p$)} ,  \edm
which is equivalent to 
\bdm (\lambda-1)^2\equiv 0\ \ \ \text{(mod $\p$)}.\edm
But this last congruence contradicts the one in (\ref{p1}). We have proved that 
\beq\label{ccop} \text{\emph{$\alpha$ and $1+\beta\lambda$ are coprime $S$-integers}}.\eeq
 From (\ref{N2}) we deduce 
\beq\label{NN2} \dfrac{a-\beta}{\alpha}\in\ri,\eeq
 and by (\ref{D2}) we obtain that $(a-c\beta)/\alpha\in \ri$; therefore by  
(\ref{f4}) and (\ref{g4}) it follows that  
\begin{align}\label{f5}&F(X,Y)=\left(\left(1+\beta\lambda\right)X-\alpha\lambda  Y\right)\left(\dfrac{a-\beta}{\alpha}X+Y\right)\\\label{g5}&G(X,Y)=\dfrac{1}{\alpha}\left[\beta\left(\left(1+\beta\lambda\right)X-\alpha \lambda Y\right)\left(\dfrac{a-\beta}{\alpha}X+Y\right)+X\left(\dfrac{a-c\beta}{\alpha}X+ cY\right)\right],\end{align}
 where $\left(\dfrac{a-\beta}{\alpha}X+Y\right)$ and $\left(\dfrac{a-c\beta}{\alpha}X+ cY\right)$ are polynomials with coefficients in $R_S$.

Let $\bar{\alpha}$ be a greatest common divisor of  $a$ and $\alpha$.
By (\ref{NN2}) the $S$-integer $\bar{\alpha}$ divides $\beta$; since $\bar{\alpha}$ divides the coefficients of $Y^2$ in (\ref{f4})  and   (\ref{g4}) and the map $\Phi$ has good reduction, $\bar{\alpha}$ has to divide all the coefficients of $F$ and $G$ as represented in (\ref{f4}) and (\ref{g4}). Thus 
\bdm \bar{\alpha}\mid 2\beta\lambda +1-a\lambda\ \ \Rightarrow\ \ \bar{\alpha}\mid 1\ \ \Rightarrow\ \ \ \bar{\alpha}\in\ri^*.\edm
This argument also proves that $\alpha$ and $\beta$ are coprime $S$-integers. Indeed by (\ref{NN2}) every prime which divides $\alpha$ and $\beta$ also divides $a$.

  Since the polynomials $\left(\dfrac{a-\beta}{\alpha}X+Y\right)$ and $\left(\dfrac{a-c\beta}{\alpha}X+ cY\right)$ belong to $R_S[X,Y]$, from  the representation of polynomials  $F(X,Y)$ and $G(X,Y)$ in (\ref{f5}) and (\ref{g5}) and the good reduction of the rational map $\Phi$ outside $S$, we deduce that every prime ideal $\p\notin S$ which divides  
\beq\label{aa+c} \left|\begin{matrix}\dfrac{a-\beta}{\alpha} & 1\\\dfrac{a-c\beta}{\alpha} & c\end{matrix}\right|=\dfrac{a(c-1)}{\alpha}\in\ri\eeq
has to divide all the coefficients of the polynomials $F$ and $G$ as represented in (\ref{f4}) and (\ref{g4}).  Let $\p\notin S$ be a prime ideal which divides $a$. Thus, since $\alpha$ and $a$ are coprime, $\p$ must divide all the coefficients of the polynomials $F$ and $G$ as represented in (\ref{f4}) and (\ref{g4}), in particular $\p|\alpha\lambda$. But, since $\lambda\in R_S^*$, it contradicts the coprimality of $\alpha$ and $a$. Hence  $a\in\ri^*$. Note that (\ref{aa+c}) also tells us that 
\beq\label{ac1}\alpha\mid(c-1).\eeq

Now, by the same above argument applied to  the polynomials $(1+\beta\lambda)X-\alpha\lambda Y$ and $\dfrac{a-c\beta}{\alpha}X+ cY$ which compare in the representation of polynomials $F$ and $G$ in (\ref{f5}) and (\ref{g5}), we see that 
\beq\label{a+c} \left|\begin{matrix}1+\beta\lambda & -\alpha\lambda\\\dfrac{a-c\beta}{\alpha} & c\end{matrix}\right|=a\lambda+c\ \ \text{divides $\alpha\lambda$}.\eeq 
Since $\lambda\in R_S^*$, (\ref{a+c}) tells us that $a\lambda+c$ divides $\alpha$. Thus, by (\ref{NN2}), $a\lambda+c$ also divides $a-\beta$ and by (\ref{ac1}) $a\lambda+c\mid c-1$, so
\bdm a\lambda+c\mid a\lambda+c-c+1=a\lambda+1\ \ \Rightarrow\ \ a\lambda+c\mid a\lambda+1-a\lambda+\beta\lambda=1+\beta\lambda\edm
therefore, by (\ref{a+c}) and (\ref{ccop}), it follows that $a\lambda+c\in\ri^*$.
\end{proof}

The next lemma concerns only cycles of length $n=4$.

\begin{lemma}\label{fin4} There exist only finitely many quadratic rational maps $\Psi\colon \pro\to \pro$, defined over $K$, such that 
\begin{itemize}
\item[i)] there exists a point $[\lambda:\mu]\in\po$ such that 
\beq\label{cy4} [0:1]\stackrel{\Psi}{\mapsto}[1:0]\stackrel{\Psi}{\mapsto}[1:1]\stackrel{\Psi}{\mapsto}[\lambda:\mu]\stackrel{\Psi}{\mapsto}[0:1];\eeq
\item[ii)] there exists an automorphism $[A]\in {\rm PGL}_2(K)$ of $\pro$ such that the rational map $[A]\circ\Psi\circ[A]^{-1}$ has good reduction outside $S$.
\end{itemize}

\end{lemma}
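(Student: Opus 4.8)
The plan is to deduce Lemma~\ref{fin4} from Lemma~\ref{n=34} together with the two-variable case of the $S$-Unit Equation Theorem. As a first step I would enlarge $S$ finitely --- which is harmless, since every $\Psi$ satisfying (i) and (ii) for $S$ also satisfies them for any larger set, so the statement for the larger set implies it for $S$ --- so as to assume $R_S$ is a P.I.D. Applying Lemma~\ref{finc} with $n=4$ shows that only finitely many points $[\lambda:\mu]$ can occur as the fourth point of a cycle (\ref{cy4}); moreover a direct check shows that in any such cycle the four points $[0:1],[1:0],[1:1],[\lambda:\mu]$ are pairwise distinct (each of the three possible coincidences of $[\lambda:\mu]$ with one of the first three forces, via the functional equation $\Psi(P_i)=P_{i+1}$, two of $[0:1],[1:0],[1:1]$ to be equal), so in particular $\mu\neq0$. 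Enlarging $S$ once more I may assume each of these finitely many fourth points has the form $[\lambda:1]$ with $\lambda\in R_S\setminus\{0,1\}$; it then suffices to fix one such $\lambda$ and bound the number of admissible $\Psi$ with fourth point $[\lambda:1]$.

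For fixed $\lambda$, such a $\Psi$ satisfies $[0:1]\mapsto[1:0]\mapsto[1:1]$ and sends $[\lambda:1]$ to $[0:1]$, so after normalizing it as in (\ref{nf}) I can take $\Psi([X:Y])=[(X-\lambda Y)(aX+bY):X(aX+cY)]$ with $a,b,c\in R_S$ of content $1$; the only nontrivial cycle condition, $\Psi([1:1])=[\lambda:1]$, reads $(1-\lambda)(a+b)=\lambda(a+c)$. Because $[0:1]$ has exact period $4$ for $\Psi$ and (ii) holds, Lemma~\ref{n=34} applies and gives $a,b,\lambda,a\lambda+c\in R_S^*$. Using $b\in R_S^*$ I rescale to $b=1$ (so $a\in R_S^*$, $c\in R_S$, $a\lambda+c\in R_S^*$ still), the relation becomes $(1-\lambda)(a+1)=\lambda(a+c)$, and solving for $c$ and substituting I expect the factorization $a\lambda+c=\frac{(\lambda-1)(a(\lambda-1)-1)}{\lambda}$.

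From this point the argument closes by an $S$-unit equation. Since $\lambda$ and $a\lambda+c$ are $S$-units, so is the product $(\lambda-1)(a(\lambda-1)-1)$; as both factors are $S$-integers, each is an $S$-unit. Hence $w:=a(\lambda-1)$ --- a product of the $S$-units $a$ and $\lambda-1$ --- lies in $R_S^*$, and $w-1=a(\lambda-1)-1\in R_S^*$, so $w+(1-w)=1$ is a solution of the two-variable $S$-unit equation in $S$-units. By the $S$-Unit Equation Theorem there are only finitely many such $w$, hence finitely many $a=w/(\lambda-1)$; then $c$ is determined, so only finitely many $\Psi$ have fourth point $[\lambda:1]$. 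Summing over the finitely many admissible $\lambda$ proves the lemma.

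I expect the essential difficulty to have been discharged already in Lemma~\ref{n=34}, which turns the good-reduction hypothesis into the membership $a,b,\lambda,a\lambda+c\in R_S^*$; what remains in Lemma~\ref{fin4} is organizational. The two points needing care are: (a) justifying that $\lambda$ may be taken in $R_S$, so that Lemma~\ref{n=34} literally applies --- this is exactly where the finiteness of the fourth point from Lemma~\ref{finc} and the finite enlargement of $S$ are used; and (b) spotting the clean factorization of $a\lambda+c$, which is what makes the last unit condition equivalent to a two-term $S$-unit equation. Degenerate sub-cases (coincidences among the cycle points, the condition $b\neq c$ ensuring degree two) remove only finitely many possibilities and do not affect the count.
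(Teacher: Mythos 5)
Your proposal is correct and follows essentially the same route as the paper: reduce to finitely many fourth points $[\lambda:1]$ via Lemma \ref{finc}, invoke Lemma \ref{n=34} to get $a,b,\lambda,a\lambda+c\in R_S^*$, and then turn the cycle relation $(1-\lambda)(a+b)=\lambda(a+c)$ into a two-term $S$-unit equation. The only (harmless) difference is cosmetic: the paper applies the $S$-Unit Equation Theorem directly to $(\lambda-1)\frac{a}{b}-\frac{\lambda}{\lambda-1}\frac{a\lambda+c}{b}=1$ in the unknowns $(a/b,(a\lambda+c)/b)$, whereas you first factor $a\lambda+c=\frac{(\lambda-1)(a(\lambda-1)-1)}{\lambda}$ and apply it to $w+(1-w)=1$ with $w=a(\lambda-1)$.
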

\begin{proof}As already remarked, without less of generality we suppose that $S$ is enlarged so that $R_S$ is a principal ideal domain. Clearly if $[\lambda:\mu]\in\{[1:0],[0:1],[1:1]\}$, then (\ref{cy4}) does not hold for any endomorphism $\Psi$. So that we can suppose that $\lambda\mu\neq 0$ and $\lambda\neq\mu$. By Lemma \ref{finc} there are only finitely many possibilities for $[\lambda:\mu]\in\po$. Hence it is sufficient to prove the lemma for such a  given point   $[\lambda:\mu]$. Let $(\lambda,\mu)\in R_S^2$ be homogeneous coprime coordinates representing the point $[\lambda:\mu]$. Thus, by adding to $S$ all the finitely many prime ideals which contain $\mu$ and $\lambda$, we can suppose that $\mu$ and $\lambda$ are $S$-units. By dividing by $\mu$, we have that $\mu=1$ and $\lambda\neq 1$.

Since $\Psi$ has to satisfy condition (\ref{cy4}), there exist three $S$-integers $a,b,c$ without common divisors (i. e. $\min \{\vap(a),\vap(b),\vap(c)\}=0$ for every $\p\notin S$) such that
\bdm \Psi([X:Y])=[(X-\lambda Y)(aX+bY):X(aX+cY)]\edm where $a\neq 0, b\neq 0, b\neq c$ and $a\lambda\neq -c$. Therefore we are in the hypotheses of Lemma \ref{n=34} so (\ref{png}) holds. Since $\lambda\neq 1$, (\ref{cc4p}) also holds, hence it follows that
\beq\label{cc4n} (1-2\lambda)a-\lambda c=b(\lambda-1).\eeq


We set $a\lambda+c=v$; by (\ref{png}) the integers $a,b,v$ are $S$-units. Therefore (\ref{cc4n}) gives $(1-2\lambda)a-\lambda v+\lambda^2 a=b(\lambda-1)$ which is equivalent to the following $S$-unit equation 
\bdm (\lambda-1)a-\frac{\lambda}{\lambda-1} v=b\edm
(recall that $\lambda\neq 1$ is fixed). By $S$-Unit Equation Theorem there are only finitely many solutions $(a/b,v/b)\in R_S^*\times R_S^*$. Clearly it follows that there are only finitely many possibilities for $[a:b:c]\in \mathbb{P}_2(K)$. This concludes the proof.
\end{proof}

\begin{proof}[Proof of Theorem \ref{tnf}]
By Lemma \ref{K=S} we can consider the action of ${\rm PGL}_2(K)$ instead of ${\rm PGL}_2(R_S)$. Therefore we are studying classes of quadratic rational maps which contain a rational map $\Psi$ of the form as in (\ref{nf}), which admits $[0:1]$ as a periodic point of exact period $n$, and an automorphism $[A]\in {\rm PGL}_2(K)$ such that the map $[A]\circ\Psi\circ[A]^{-1}$ has good reduction outside $S$. If $n\geq 5$ then Lemma \ref{finc} and Lemma \ref{fin5} give the proof of Theorem \ref{tnf}. For the case $n=4$ we are done, by Lemma \ref{fin4}.
\end{proof}

As already written in the introduction of this paper, to prove Theorem 1' it is sufficient now to apply the bound proved by Morton and Silverman in \cite{M.S.1}.

\section{Periodic points with exact period equal to 3.}

As already mentioned in the introduction, for the proof of Theorem \ref{m1} (the case $n=3$), we need some results proved by P. Corvaja and U. Zannier in \cite{C.Z.1} which were obtained via the so called Subspace Theorem proved by W.M. Schmidt (e.g. see \cite{Sc.1} or \cite{Sc.2}).
Actually the Corvaja and Zannier's results are used to prove a technical lemma which only involves divisibility arguments. Hence we have dedicated a subsection to this preliminary result. This technical lemma will be used in the second part of this section to prove the following proposition which represents an important tool to prove Theorem \ref{m1} and Theorem \ref{tf32}:

\begin{prop}\label{n=3} There exists a finite set $\mathcal{B}$, depending only on $S$, such that if $\Psi$ is a quadratic rational map defined over $K$ satisfying 
\beq\label{pp0} [0:1]\stackrel{\Psi}{\mapsto}[1:0]\stackrel{\Psi}{\mapsto}[1:1]\stackrel{\Psi}{\mapsto}[0:1]\eeq
and equivalent, with respect to the action of ${\rm PGL}_2(K)$, to a rational map with good reduction outside $S$, then there exist two $S$-integers $a$ and $c$ such that the map $\Psi$ has the form 
\beq\label{nPsi3} \Psi(X,Y)=[(X-Y)(aX+Y):X(aX+cY)]\eeq
and
\begin{itemize}\item[1)]$a\in \ru$ and $a+c\in \ru$;
\item[2)]$a,c\in\mathcal{B}$ or one of the following conditions holds:
\begin{itemize}\item[i)] $a=-1$,
\item[ii)]$c=0$,
\item[iii)]$c=1-a$.
\end{itemize}\end{itemize}
Furthermore if an automorphism $[A]$ is such that the rational map $[A]\circ\Psi\circ[A]^{-1}$ has good reduction outside $S$, then the ideal ${\rm Disc}([A])$ of $\ri$ has only finitely many possibilities which only depend on $S$.
\end{prop}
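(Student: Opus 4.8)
The plan is to start from the normal form (\ref{nf}) with $\lambda$ forced to equal $1$ (since condition (\ref{pp0}) says $[0:1]$ has exact period $3$, so the three points $[0:1],[1:0],[1:1]$ form the cycle and the preimage $\eta$ of $[0:1]$ that we chose in the $3$-transitivity argument is $[1:1]$ itself), and to invoke Lemma \ref{n=34} with $\lambda=1$. That lemma gives $b,a,a+c\in R_S^*$ (taking $\lambda=1$ in (\ref{png})), and after rescaling $b$ to $1$ as in the proof of Lemma \ref{n=34} we obtain exactly the shape (\ref{nPsi3}) together with part 1). So the content of the proposition beyond Lemma \ref{n=34} is entirely in part 2): showing that apart from a finite ($S$-dependent) exceptional set $\mathcal B$, one of the three degenerate relations $a=-1$, $c=0$, $c=1-a$ must hold. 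I would also need the final sentence, that ${\rm Disc}([A])$ lies in a finite set depending only on $S$; this should come out of re-running the divisibility bookkeeping from Lemma \ref{n=34} and noting that ${\rm Disc}([A])$ is measured by $\vap(\alpha)$, with Lemma \ref{genA} guaranteeing the matrix $A$ can be taken of the form $\bpm\alpha&0\\\beta&1\epm$.

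Next I would extract, from the good-reduction hypothesis on $\Phi=[A]\circ\Psi\circ[A]^{-1}$, the key divisibility constraints on $\alpha,\beta$ in terms of the $S$-units $a$ and $v:=a+c$. From the proof of Lemma \ref{n=34} (specialized to $\lambda=1$) we already have: $\alpha\mid a-\beta$ (this is (\ref{NN2})), $\alpha\mid c-1$ (this is (\ref{ac1})), $\alpha$ and $1+\beta$ are coprime, $\alpha$ and $\beta$ are coprime, and $a\lambda+c=a+c=v\in R_S^*$. The discriminant of $\Phi$ — forced to be an $S$-unit — should, after the reductions bringing $F,G$ into the form (\ref{f5})--(\ref{g5}) with $\lambda=1$, factor into pieces each of which is an $S$-unit; the interesting one involves $\alpha$, $\beta$, $a$ and $c$. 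The strategy is to write the resultant (equivalently, a suitable $2\times2$ determinant of the linear factors, as in (\ref{aa+c}) and (\ref{a+c})) as a combination of monomials in the $S$-units $a,v$ and the quantities $1+\beta$, $a-\beta$, and then use the coprimality relations to pin these down up to $S$-units. This is where the technical lemma alluded to just before the proposition — the one built on Corvaja--Zannier's $\gcd(u-1,v-1)<\max(H(u),H(v))^\epsilon$ estimate — enters: it is needed precisely to control $\gcd$-type quantities like $\gcd(a-1, c)$ or $\gcd(a+1,\ldots)$ among the $S$-units, which a bare $S$-unit equation argument cannot handle when the relevant equation is genuinely a three-term equation with a non-degenerate-solution caveat that fails on a curve.

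Concretely, I expect the argument to produce an equation of the form (some $S$-unit)$\,\cdot(1+\beta) + ($some $S$-unit$)\cdot(a-\beta) = $ (something forced to be a unit times a bounded quantity), i.e. after dividing through, an $S$-unit equation $u_1 x_1 + u_2 x_2 = 1$ in two $S$-unit unknowns whose solutions are finite — \emph{except} when a subsum vanishes, which is exactly a degenerate case. The three exceptional conditions $a=-1$, $c=0$, $c=1-a$ should be exactly the loci where the relevant subsum vanishes (equivalently, where $a$, $c$, $a+c$, $a+1$, $c-1$ acquire forced relations that collapse the $S$-unit equation); off these loci the $S$-Unit Equation Theorem, together with the Corvaja--Zannier technical lemma to dispose of the gcd terms, gives finitely many $(a,c)$, and I collect these finitely many values into $\mathcal B$. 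Finally, tracing $\vap(\alpha)$ through the divisibility relations $\alpha\mid c-1$, $\alpha\mid a-\beta$, and $\alpha$ coprime to $\beta$ and to $1+\beta$: since $c-1$ is, off the exceptional locus, an $S$-unit times a bounded factor, $\alpha$ is bounded, hence ${\rm Disc}([A])=(\alpha^2)$ or $(\alpha)$-type data lies in a finite set depending only on $S$ (using Lemma \ref{genA}'s assertion that ${\rm Disc}([A])={\rm Disc}([B])$, so this holds for every admissible $[A]$, not just the normalized one).

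The main obstacle, as I see it, is exactly the step that forces one into the Corvaja--Zannier machinery: the naive two-term $S$-unit equation one writes down has its degenerate (vanishing-subsum) solutions forming a positive-dimensional family, and the genuinely new input needed to show that the \emph{non}-degenerate part is finite while the degenerate part is governed by the three explicit relations $a=-1$, $c=0$, $c=1-a$ is the gcd estimate. I would therefore structure the proof so that the purely formal divisibility bookkeeping (reductions of $F,G$, coprimality of $\alpha$ with $\beta$ and $1+\beta$, $\alpha\mid c-1$, $a+c\in R_S^*$) is dispatched first and cleanly, isolating a single precise gcd-quantity (say $\gcd(a-1, a+c-1)$ or similar) whose smallness is all that the Corvaja--Zannier lemma is called upon to provide; everything downstream is then a finite $S$-unit equation plus the casework identifying the three exceptional conditions.
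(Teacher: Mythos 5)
Your overall architecture does match the paper's: Lemma \ref{n=34} with $\lambda=1$ gives the shape (\ref{nPsi3}) and part 1); a discriminant computation for $\Phi=[A]\circ\Psi\circ[A]^{-1}$ produces an equation among $S$-units whose non-degenerate solutions are finite and whose vanishing subsums are exactly the three exceptional conditions; and Corvaja--Zannier supplies the extra input. But the two load-bearing steps are misidentified or left as black boxes, and as written the sketch would not close. The central equation is not of the form $u_1(1+\beta)+u_2(a-\beta)=(\text{unit})\cdot(\text{bounded})$: a unit equation in the unknowns $1+\beta$ and $a-\beta$ would not constrain $(a,c)$ at all. What the good-reduction hypothesis actually yields is the resultant identity ${\rm Res}(F,G)=-\alpha^6a(a+c)(c-1)$ of (\ref{resphi}) together with $\min\{\vap(F),\vap(G)\}=2\vap(\alpha)$ for $\p\notin S$, forcing $c-1=u\alpha^2$ for an $S$-unit $u$; here $\beta$ drops out entirely. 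Writing $v=a+c\in\ru$ this is the \emph{three}-term relation $u\alpha^2=v-a-1$ of (\ref{alpha2}), and the three exceptional conditions are precisely its vanishing subsums ($a=-1$, $v=a$, $v=1$). Your alternative framing via a two-term unit equation with a positive-dimensional degenerate locus is internally inconsistent, since two-term unit equations admit no vanishing subsums.

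The second gap is in the Corvaja--Zannier step. The relation $u\alpha^2=v-a-1$ is useless as a unit equation until one knows $\alpha$ ranges over a finitely generated set; this is exactly what the technical Lemma \ref{n=3part2} provides ($\alpha$ is a $T$-unit for a fixed finite $T\supseteq S$). The divisibilities feeding into it are not $\gcd(a-1,c)$ or $\gcd(a+1,\cdot)$ but the specific relations $\alpha\mid 1+a+a^2$ and $\alpha\mid 1-v+v^2$ in $\ri$, extracted from (\ref{N23})--(\ref{D13}) together with $\alpha\mid c-1$; hence $v-a-1$ divides $(1+a+a^2)^2$ and $(1-v+v^2)^2$ up to the unit $u$. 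Moreover, the Corvaja--Zannier estimates only control pairs $(a,v)$ outside finitely many translates of $1$-dimensional subtori of ${\rm \bf G}_m^2$, so the paper must separately treat the pairs lying on a curve $a^pv^q=\omega$, which it does by reducing to a hyperelliptic equation and invoking Siegel's theorem (Lemmas \ref{lsr} and \ref{lfu}); this unavoidable second stage is absent from your plan. Once these pieces are in place, your final remark about ${\rm Disc}([A])$ does go through, since ${\rm Disc}([A])=\alpha\cdot\ri$, the unit-equation step pins down $u\alpha^2=c-1$ in the non-degenerate case, and $\alpha\in\ru$ in each of the three degenerate cases.
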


\subsection{A technical lemma.}

We set the following notation:
\bdm \log^+x\coloneqq\max\{0,\log x\}\ \ ,\ \ \log^-x\coloneqq\min\{0,\log x\},\ \ x>0.\edm
Let $M_K$ denote the set of places of $K$. For a place $\mu\in M_K$, we shall denote by $|\cdot|_{\mu}$ the corresponding absolute value normalized with respect to $K$ so that the product formula 
\bdm \prod_{\mu\in M_K}|x|_{\mu}=1\edm
holds for all $x\in K\setminus\{0\}$. For any element $x\in K$, let $H(x)$ denote the absolute multiplicative height of the point $[1:x]\in\po$
\bdm H(x)=\prod_{\mu\in M_K}\max \{1,|x|_{\mu}\}.\edm
We shall denote by $h(x)$ the logarithmic height $h(x)=\log H(x)$.

Below we state three of the results, proved by P. Corvaja and U. Zannier in \cite{C.Z.1}, in a simpler form and adapted to our situation.

\vspace{4mm}
\noindent{\bf \cite[Main Theorem]{C.Z.1}}. \emph{Let $f(X,Y)\in\overline{\mathbb{Q}}(X,Y)$ be a rational function. Suppose that $1$ appears as a monomial in the numerator or in the denominator of $f(X,Y)$. Then for every $\epsilon>0$ the Zariski closure of the set of solutions $(u,v)\in (R_S^*)^2$ of the inequality 
\bdm h(f(u,v))<(1-\epsilon)\max \left\{\dfrac{h(u)}{2\deg_Y f},\dfrac{h(v)}{2\deg_X f}\right\}\edm
is a finite union of translates of proper subtori of ${\rm \bf G}_m^2.$}

\vspace{4mm}
\noindent{\bf \cite[Proposition 1]{C.Z.1}}. \emph{Let $f(X,Y)\in K[X,Y]$ be a polynomial with $f(0,0)\neq 0$. Then for every $\epsilon>0$, all but finitely many solutions $(u,v)\in (R_S^*)^2$ to the inequality 
\bdm \sum_{\mu\in S}\log^-|f(u,v)|_{\mu}<-\epsilon\max \left\{h(u),h(v)\right\}\edm
lie in the union of finitely many translates of $1$-dimensional subtori of ${\rm \bf G}_m^2$, which can be effectively determined.}

\vspace{4mm}
\noindent{\bf \cite[Proposition 4]{C.Z.1}}. \emph{Let $r(X),s(X)\in\overline{\mathbb{Q}}[X]$ be two non zero polynomials. Then for every $\epsilon>0$, all but finitely many solutions $(u,v)\in (R_S^*)^2$ to the inequality 
\bdm \sum_{\mu\in M_K\setminus S}\log^-\max\left\{|r(u)|_{\mu},|s(v)|_{\mu} \right\}  <-\epsilon\max \left\{h(u),h(v)\right\}\edm
lie in the union of finitely many translates of $1$-dimensional subtori of ${\rm \bf G}_m^2$, which can be effectively determined.}

\vspace{4mm}

These results are used to prove the following technical lemma:
\begin{lemma}\label{n=3part2}There exists a finite set $T$ of prime ideals of $R$, containing $S$, with the following property: if $\alpha$ is a non zero $S$-integer for which there exist three $S$-units $a$, $u$ and $v$ such that 
\beq\label{alpha2}u\alpha^2=v-a-1\eeq

\beq\label{da} \alpha\mid (1+a+a^2)\ \ \text{in $R_{S}$}\eeq
and
\beq\label{dv} \alpha\mid (1-v+v^2)\ \ \text{in $R_{S}$},\eeq then $\alpha$ is a $T$-unit.
\end{lemma}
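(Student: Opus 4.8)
My plan is to read off from the hypotheses the divisibility $\alpha\mid\gcd(a^{3}-1,v^{3}+1)$ in $R_S$, to use $u\alpha^{2}=v-a-1$ to control the size of the part of $\alpha$ supported outside $S$, and then to apply the Corvaja--Zannier estimates of \cite{C.Z.1} twice — once to this gcd and once to the polynomial $Y-X-1$ — to conclude that this part is trivial for all but finitely many configurations. I would first fix notation: for a nonzero $x\in R_S$ let $N_{\mathrm{out}}(x)$ be the absolute norm of the part of the ideal $xR_S$ supported outside $S$, so that the product formula gives $\log N_{\mathrm{out}}(x)=\sum_{\mu\in S}\log|x|_{\mu}$ and hence $h(x)=\log N_{\mathrm{out}}(x)-\sum_{\mu\in S}\log^{-}|x|_{\mu}\ge\log N_{\mathrm{out}}(x)$, while $\alpha\mid x$ in $R_S$ forces $N_{\mathrm{out}}(\alpha)\le N_{\mathrm{out}}(x)$. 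Since enlarging $S$ by finitely many primes preserves the hypotheses and one only needs some finite $T\supseteq S$, I may (Remark \ref{rpid}) assume $R_S$ is a P.I.D.\ and, for convenience, that $S$ contains the primes above $6$.

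The reductions are then immediate. As $u\in R_S^{*}$, the relation $u\alpha^{2}=v-a-1$ gives $v_{\mathfrak p}(v-a-1)=2v_{\mathfrak p}(\alpha)$ for every $\mathfrak p\notin S$, so $N_{\mathrm{out}}(v-a-1)=N_{\mathrm{out}}(\alpha)^{2}$. Dividing that relation by $v$ gives $\frac{u\alpha^{2}}{v}+\frac{a}{v}+\frac{1}{v}=1$; if a proper subsum of the left side vanishes then $a=-1$, $v=1$ or $u\alpha^{2}=-1$, in which cases respectively $1+a+a^{2}=1$, $1-v+v^{2}=1$ or $\alpha^{2}\in R_S^{*}$, so $\alpha\in R_S^{*}$ and we are done; so I assume no subsum vanishes. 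Finally, from $\alpha\mid1+a+a^{2}\mid a^{3}-1$ and $\alpha\mid1-v+v^{2}\mid v^{3}+1$ I get $\alpha\mid\gcd(a^{3}-1,v^{3}+1)$, hence $N_{\mathrm{out}}(\alpha)\le N_{\mathrm{out}}(\mathfrak g)$ where $\mathfrak g=\prod_{\mathfrak p\notin S}\mathfrak p^{\min\{v_{\mathfrak p}(a^{3}-1),\,v_{\mathfrak p}(v^{3}+1)\}}$.

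Now fix a small $\epsilon>0$. Proposition 4 of \cite{C.Z.1}, applied with $r(X)=X^{3}-1$, $s(X)=X^{3}+1$ at the $S$-unit pair $(a,v)$ — noting that $-\sum_{\mu\notin S}\log^{-}\max\{|a^{3}-1|_{\mu},|v^{3}+1|_{\mu}\}=\log N_{\mathrm{out}}(\mathfrak g)$ — shows that for all $(a,v)$ outside a finite set and outside finitely many translates of $1$-dimensional subtori of $\mathbf{G}_m^{2}$ one has $\log N_{\mathrm{out}}(\alpha)\le\epsilon\max\{h(a),h(v)\}$. Next, the polynomial $f(X,Y)=Y-X-1$ has $f(0,0)=-1\ne0$, has $1$ among its monomials and $\deg_{X}f=\deg_{Y}f=1$; applying the Main Theorem and Proposition 1 of \cite{C.Z.1} to $f$ at $(a,v)$ gives, again outside a finite set and finitely many translates of proper subtori, both $h(v-a-1)\ge\frac{1-\epsilon}{2}\max\{h(a),h(v)\}$ and $-\sum_{\mu\in S}\log^{-}|v-a-1|_{\mu}\le\epsilon\max\{h(a),h(v)\}$. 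Since $h(v-a-1)=\log N_{\mathrm{out}}(v-a-1)-\sum_{\mu\in S}\log^{-}|v-a-1|_{\mu}=2\log N_{\mathrm{out}}(\alpha)-\sum_{\mu\in S}\log^{-}|v-a-1|_{\mu}$, the last two inequalities give $\left(\frac{1-\epsilon}{2}-\epsilon\right)\max\{h(a),h(v)\}\le2\log N_{\mathrm{out}}(\alpha)$; together with the gcd bound and $\epsilon$ small enough this forces $\log N_{\mathrm{out}}(\alpha)=0$, i.e.\ $\alpha\in R_S^{*}$, for every $(a,v)$ outside the finitely many exceptional pairs and finitely many exceptional $1$-dimensional families. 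For an exceptional pair, $h(a),h(v)$ are bounded in terms of $S$ and $K$, hence so is $N_{\mathrm{out}}(\alpha)^{2}=N_{\mathrm{out}}(v-a-1)$, so the primes dividing $\alpha$ lie in a finite set; on an exceptional family $a^{i}v^{j}=c$ one reparametrizes to eliminate a free $S$-unit (e.g.\ if $i=0$ then $v$ takes finitely many values and $u\alpha^{2}=v-a-1$ with $\alpha\mid1+a+a^{2}$ reduces, after dividing by $a$, to a two-term $S$-unit equation, while in the sub-case $1-v+v^{2}=0$ one argues as above that $N_{\mathrm{out}}(\alpha)$ is bounded), again confining the primes dividing $\alpha$ to a finite set. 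Taking $T$ to be $S$ together with all these finitely many primes — a finite set depending only on $S$ and $K$ — gives that $\alpha$ is a $T$-unit in every case.

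The step I expect to be the main obstacle is the lower bound $\max\{h(a),h(v)\}\ll\log N_{\mathrm{out}}(\alpha)$ above: because $u$ is an unconstrained $S$-unit, nothing in $u\alpha^{2}=v-a-1$ bounds the heights of $a$ and $v$ in terms of $\alpha$ directly, and the argument only works because $u$ drops out of $N_{\mathrm{out}}(u\alpha^{2})=N_{\mathrm{out}}(\alpha)^{2}$ while the possibly large $S$-adic size of $v-a-1$ is exactly what Proposition 1 of \cite{C.Z.1} keeps under control. The secondary difficulty is the bookkeeping of the several exceptional subtori produced by the three applications of the Corvaja--Zannier results, and the separate, somewhat ad hoc analysis of each $1$-dimensional exceptional family.
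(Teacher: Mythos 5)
Your first half is essentially the paper's own argument and is correct: the three Corvaja--Zannier estimates (Proposition~4 applied to a pair of polynomials that $\alpha$ divides, and the Main Theorem together with Proposition~1 applied to $Y-X-1$) combine, exactly as you compute, to force $\max\{h(a),h(v)\}=0$, hence $\alpha\in R_S^*$, for all $(a,v)$ outside finitely many exceptional pairs and finitely many translates of $1$-dimensional subtori of ${\rm \bf G}_m^2$; your use of $a^3-1$ and $v^3+1$ in place of $(1+a+a^2)^2$ and $(1-v+v^2)^2$, and your $N_{\mathrm{out}}$ bookkeeping, are harmless variants. The genuine gap is the treatment of the exceptional families $a^iv^j=c$, which is where all of the remaining (and hardest) work lies. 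Your claimed reduction ``to a two-term $S$-unit equation'' cannot work: after any reparametrization the relation still reads $u\alpha^2=(\text{Laurent polynomial in an $S$-unit})$, and $u\alpha^2$ is \emph{not} an $S$-unit --- if it were, the lemma would already be proved --- so no amount of dividing by units produces an $S$-unit equation. Moreover you only discuss the degenerate case $i=0$; the generic family with $i,j\neq 0$ is not addressed at all.

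What is actually needed on such a family (and what the paper does) is twofold. First, for any prime $\p\mid\alpha$ the divisibilities (\ref{da}) and (\ref{dv}) force $a$ to be congruent to a primitive cube root of unity $\rho$ and $v$ to a primitive sixth root of unity $\zeta$ modulo $\p$; hence $c=a^iv^j\equiv\rho^i\zeta^j\pmod{\p}$, so unless $c$ equals one of finitely many specific roots of unity, $\p$ divides the fixed nonzero number $c-\rho^i\zeta^j$ and only finitely many primes can occur. Second, in the remaining case one writes $a$ and $v$ as powers of a single $\mathbb{S}$-unit $\eta$ (after passing to a finite extension containing the relevant radicals), obtaining $w\alpha^2=f(\eta)$ for a trinomial $f$ with unit coefficients; one checks that $f$ has a simple root, absorbs $w$ into finitely many square classes and $\eta$ into finitely many cube classes, and applies Siegel's theorem to the resulting curve $y^2=G(t)$ of genus $\geq 1$ to conclude that $\alpha$ divides one of finitely many numbers. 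Neither step can be dispensed with: for instance on the family where $v$ is a fixed primitive sixth root of unity the congruence information degenerates (one finds $a\equiv v-1$, which is itself a primitive cube root of unity, so no nonzero quantity is produced) and one really is left with an equation $y^2=(\text{cubic in an $S$-unit})$ requiring Siegel's theorem. Your sketch is therefore missing the essential second half of the proof.
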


\begin{proof}
If the right side term in equation (\ref{alpha2}) has a vanishing subsum, then it is sufficient to take $T=S$. Therefore we suppose that the right side term in equation (\ref{alpha2}) has no vanishing subsums. In order to use \cite[Main Theorem]{C.Z.1}, \cite[Proposition 1]{C.Z.1} and \cite[Proposition 4]{C.Z.1}, note that under (\ref{alpha2}), (\ref{da}), (\ref{dv}), $v-a-1\mid (1+a+a^2)^2$ and $v-a-1\mid (1-v+v^2)^2$ in $R_{S}$.
To ease notation we set 
\bdm f(X,Y)=Y-X-1\ \ ,\ \ r(X)=(1+X+X^2)^2\ \ ,\ \ s(X)=(1-X+X^2)^2,\edm
so $f(a,v)$ divides $r(a)$ and $s(v)$ in $R_S$.
Note that if $x,y\in\ri$ then $\log^-\left(|x|_{\mu}\cdot|y|_{\mu}\right)=\log^-|x|_{\mu}+\log^-|y|_{\mu}$ for all $\mu\in M_K\setminus S$. Hence, since $r(a)/f(a,v)$ and $s(v)/f(a,v)$ belong to $R_{S}$, we have that
\begin{multline*} -\sum_{\mu\in M_{ K}\setminus S}\log^-\max\{|r(a)|_{\mu},|s(v)|_{\mu}\}=\\-\sum_{\mu\in M_{ K}\setminus S}\log^-|f(a,v)|_{\mu}-\sum_{\mu\in M_{ K}\setminus S}\log^-\max\left\{\left|\dfrac{r(a)}{f(a,v)}\right|_{\mu},\left|\dfrac{s(v)}{f(a,v)}\right|_{\mu}\right\}\end{multline*}
thus
\beq\label{acz1}-\sum_{\mu\in M_{ K}\setminus S}\log^-|f(a,v)|_{\mu}\leq -\sum_{\mu\in M_{ K}\setminus S}\log^-\max\{|r(a)|_{\mu},|s(v)|_{\mu}\}.\eeq
Now by the product formula we have that $\sum_{\mu\in M_{ K}}\log|f(a,v)|_{\mu}=0$ and by definition of the logarithmic height it follows that $h(f(a,v))=\sum_{\mu\in M_{ K}}\log^+|f(a,v)|_{\mu}$. Hence
\bdm -\sum_{\mu\in M_{ K}\setminus S}\log^-|f(a,v)|_{\mu}=h(f(a,v))+\sum_{\mu\in S}\log^-|f(a,v)|_{\mu}.\edm
  Since every point of $(R_S^*)^2$ is contained in a suitable translate of a $1$-dimensional subtorus of ${\rm \bf G}_m^2$, by applying \cite[Proposition 1]{C.Z.1} with $\epsilon=1/4$ we obtain from the above identity that for all $(a,v)\in(\ri^*)^2$ outside a finite union of finitely many  translates    of $1$-dimensional subtori of ${\rm \bf G}_m^2$ 
\bdm -\sum_{\mu\in M_{ K}\setminus S}\log^-|f(a,v)|_{\mu}\geq h(f(a,v))-\dfrac{1}{4}\max\{|h(a),h(v)\}.\edm
By applying \cite[Main Theorem]{C.Z.1} with $\epsilon=1/4$ to the last inequality we have 
\beq\label{acz2} -\sum_{\mu\in M_{ K}\setminus S}\log^-|f(a,v)|_{\mu}\geq \frac{3}{8}\max\{|h(a),h(v)\}-\dfrac{1}{4}\max\{h(a),h(v)\}=\dfrac{1}{8}\max\{h(a),h(v)\}\eeq
holds for all $(a,v)\in(\ri^*)^2$ outside a finite union of finitely many translates of $1$-dimensional subtori of ${\rm \bf G}_m^2$.

Now applying \cite[Proposition 4]{C.Z.1} with $\epsilon=1/9$ we obtain that for all $(a,v)\in(\ri^*)^2$ outside a finite union of finitely many translates of $1$-dimensional subtori of ${\rm \bf G}_m^2$,
\beq\label{acz3} 
-\sum_{\mu\in M_{ K}\setminus S}\log^-\max\{|r(a)|_{\mu},|s(v)|_{\mu}\}\leq \frac{1}{9}\max\{h(a),h(v)\}.\eeq
Putting together (\ref{acz1}), (\ref{acz2}) and (\ref{acz3}) we obtain that for all $(a,v)\in(\ri^*)^2$ outside a finite union of finitely many translates of $1$-dimensional subtori of ${\rm \bf G}_m^2$
\bdm \frac{1}{9}\max\{|h(a),h(v)\}\geq \frac{1}{8}\max\{h(a),h(v)\}\edm
holds, which means $\max\{h(a),h(v)\}=0$.
 Since there are only finitely many algebraic numbers of bounded degree and bounded height (see for example Theorem 1.6.8 in \cite{Bo.1} or Theorem B.2.3 in \cite{H.S.1})  , there are only finitely many $(a,v)\in(R_{S}^*)^2$ which satisfy $\max\{h(a),h(v)\}=0$. \\
 Therefore, by using again that every point of $(R_S^*)^2$ is contained in a suitable translate of a $1$-dimensional subtorus of ${\rm \bf G}_m^2$, we can summarize all the above arguments by saying that there exists a finite union of translates of $1$-dimensional subtori of ${\rm \bf G}_m^2$ which contains every pair $(a,v)$ satisfying the hypotheses of Lemma  \ref{n=3part2}.   

Recall that every translate of a proper subtorus of ${\rm \bf G}_m^2$ is either a point or a curve defined by an equation of the form 
\bdm A^pV^q=\omega\edm
for coprime $p,q\in\ZZ$ and nonzero $\omega\in K^*$. By the above arguments, there exist only finitely many coprime $p,q\in\ZZ$ and $\omega\in K^*$ such that for every couple $(a,v)\in(R_{S}^*)^2$ satisfying  the hypotheses of Lemma  \ref{n=3part2}, one of the following finitely many identities
\beq\label{omegaeq} a^pv^q=\omega\eeq 
holds. 
Without loss of generality we can suppose $p>0$. By Dirichlet Unit Theorem (e.g. see \cite[pag. 273]{H.S.1}) the group $R_{S}^*$ is finitely generated. Thus there exists a finite extension $F$ of $K$ which contains all $p$-th roots  of all $S$-units. Since $a^p=\omega v^{-q}$ we can write the $ S$-unit $a$ in the following way
\beq\label{ale} a=\lambda(\eta)^{-q}\eeq 
where $\lambda,\eta\in F$ are one of the  $p$-th roots    of $\omega$ and $v$ respectively. Of course we have to enlarge $S$ by taking all places of the extension $F$ which extend all places in $S$. We shall denote this set by $\mathbb{S}$. Moreover we add finitely many prime ideals to $\mathbb{S}$ so that the ring of $\mathbb{S}$-integers of $F$ is a principal ideal domain. 
The equality (\ref{alpha2}) becomes
\beq\label{alp} u\alpha^2=\eta^p-\lambda(\eta)^{-q}-1.\eeq
If $q<0$, then $u\alpha^2=f(\eta)$ where 
\beq\label{ft1}f(t)=t^p-\lambda t^{-q}-1\in F[t].\eeq 
If $q>0$ then 
\beq\label{alp1}u\eta^q\alpha^2=\eta^{p+q}-\eta^q-\lambda\eeq
so $u\eta^q\alpha^2=f(\eta)$ where 
\beq\label{ft2}f(t)=t^{p+q}-t^q-\lambda\in F[t].\eeq 
Since there are only finitely many possibilities for $p,q\in\ZZ$ and $\omega\in K$, there are only finitely many possibilities for $f(t)$.

Now we prove that there exists a finite set $T$ such that $\alpha\in T$-units.  Let $\rho_1$ and $\rho_2$ be the primitive cube roots of unity and $\zeta_1$ and $\zeta_2$ be the primitive sixth roots of unity. We enlarge $F$ and $\mathbb{S}$ so that the ring $Q_{\mathbb{S}}$ of $\mathbb{S}$-integers of $F$ is a P.I.D. containing all $6$-th roots of unity $\rho_1$, $\rho_2$, $\zeta_1$ and $\zeta_2$. 
Let $\p$ be a prime ideal of $Q_{\mathbb{S}}$ which divides $\alpha$.  By (\ref{da}) and (\ref{ale})
\bdm 0\equiv1+\lambda\eta^{-q}+\lambda^2\eta^{-2q}\equiv (\lambda\eta^{-q}-\rho_1)(\lambda\eta^{-q}-\rho_2)\ \ \ \text{(mod$\p$)}\edm
hence
\beq\label{et} \lambda\eta^{-q}\equiv\rho_i\ \ \ \text{(mod$\p$)}\ \ \text{with $i=1$ or $i=2$}.\eeq
By (\ref{dv})
\bdm 0\equiv\eta^{2p}-\eta^p+1\equiv(\eta^p-\zeta_1)(\eta^p-\zeta_2)\ \ \ \text{(mod$\p$)}\edm
hence
\beq\label{ep} \eta^p\equiv\zeta_j\ \ \text{(mod$\p$)}\ \ \text{with $j=1$ or $j=2$}.\eeq
Note that $\eta\nequiv 0 \ \text{(mod$\p$)}$. Now (\ref{et}) and (\ref{ep}) give 
\beq\label{etp2}\lambda^p\eta^{-pq}\equiv\rho_i^p\ \ ,\ \ \lambda^p\eta^{-pq}\equiv \lambda^p\zeta_j^{-q}\ \ \ \text{(mod$\p$)}\eeq
so 
\bdm \lambda^p\equiv\rho_i^p\zeta_j^{q}\ \ \ \text{(mod$\p$)},\edm
i.e. the prime ideal $\p$ divides $\lambda^p-\rho^{p}_i\zeta^{q}_j$. We have proved that there exist only finitely many possibilities for $\p$ unless $\omega=\lambda^p=\zeta_j^{q}\rho_i^{p}$. Of course this last equality implies that $\lambda$ is a root of unity. We denote by $T_1$ the set of prime ideals defined by
\small\bdm T_1\coloneqq\left\{\p\ \text{prime ideal}\left| \begin{array}{l}\text{$\p$ divides a non zero $\mathbb{S}$-integer of the form $\omega-\zeta_j^{q}\rho_i^{p}$,  }\\\text{where $(\omega,p,q)\in R_S^*\times\mathbb{Z}\times\mathbb{Z}$ is associated }\\\text{to one of the finitely many identities (\ref{omegaeq})}\end{array}\right.\right\}.\edm\normalsize
Clearly $T_1$ is a finite set.
Therefore if any $\omega$, which satisfies (\ref{omegaeq}), is not a root of unity we are done. Otherwise, if $\lambda$ is a root of unity, then we prove that any polynomial $f(t)$ like (\ref{ft1}) and (\ref{ft2}) has a simple root. 
Actually we prove the simple and more general result:

\begin{lemma}\label{lsr}Let $f(t)\in\mathbb{C}[t]\setminus\mathbb{C}$ be a polynomial of the form $t^m+\lambda_1t^n+\lambda_2$ with $|\lambda_1|=|\lambda_2|=1$, where $|\cdot|$ denotes the usual absolute value on $\mathbb{C}$. Then $f(t)$ has at least one simple root.\end{lemma}
\begin{proof}If $m=n$ the lemma is trivial. Let $m\neq n$, without loss of generality we can suppose that $m>n$.  If $\zeta$ is a repeated root of the polynomial $f(t)$, then $\zeta$ is a root of $f^\prime(t)=mt^{m-1}+\lambda_1nt^{n-1}$. Thus, since $\zeta\neq 0$, $\zeta^{m-n}=-\lambda_1n/m$. If $f(t)$ had only repeated roots, then 
\bdm |\lambda_2|=\left|-\lambda_1\frac{n}{m}\right|^{\frac{m}{m-n}}\edm
which is trivially false since $|\lambda_2|=|\lambda_1|=1$, $m/(m-n)\neq 0$ and $|n/m|\neq 1$.
\end{proof}
Now we prove a lemma which is a simple application of Siegel Theorem about the finiteness of $\mathbb{S}$-integer points of a curve of genus $\geq 1$ (e.g. see \cite[Chapter 7]{S.1}).

\begin{lemma}\label{lfu}Let $f(t)\in F[t]$ be a polynomial which has at least one simple root and does not vanish at zero. Then, the equation 
\beq\label{l2}y^2=f(u)\eeq
has only finitely many solution $(y,u)\in Q_{\mathbb{S}}\times Q_{\mathbb{S}}^*$.
\end{lemma}
\begin{proof}
Let $\mathcal{C}$ be the affine curve defined by the equation $y^2=f(t)$. For any double root $a$ of the polynomial $f$, we can replace $y$ by $(t-a)y$ and cancel $(t-a)^2$. In this way we can assume that $\mathcal{C}$ is given by the equation $y^2=g(t)$ where $g(t)\in F[t]\setminus F$ has no repeated roots; thus $\mathcal{C}$ is an affine smooth curve. Recall that the group of $\mathbb{S}$-units $Q_{\mathbb{S}}^*$ is a finitely generated group, hence there exists a finite set $U$ such that every $\mathbb{S}$-unit $u$ is such that $u=v\cdot w^3$, for suitable $v\in U$ and $w\in Q_{\mathbb{S}}^*$. Therefore if $(y,u)\in Q_{\mathbb{S}}\times Q_{\mathbb{S}}^*$ is a solution of (\ref{l2}), then
\beq\label{G} y^2=g(u)=G(w)\eeq where $G\in F[t]$ is a polynomial with degree $3n\geq 3$ without repeated roots so that the curve defined by the affine equation $y^2=G(t)$ has genus $\geq 1$. By Siegel Theorem there are only finitely many solutions $(y,w)\in Q_{\mathbb{S}}\times Q_{\mathbb{S}}$ to the equation (\ref{G}) so that the lemma is proved.

\end{proof} 

By (\ref{alp}) and (\ref{alp1}) there exists an $\mathbb{S}$-unit $w$ (equal to $u$ or  $u\eta^q$  ) such that $w\alpha^2=f(\eta)$, where $f(t)$ is one of the finitely many polynomials described in (\ref{ft1}) and (\ref{ft2}). If $\lambda$ is a root of unity, then any such polynomial $f(t)$ satisfies the hypothesis of Lemma \ref{lsr}. Indeed any polynomials $f(t)$ as in (\ref{ft1}) and (\ref{ft2}) has degree $\geq 1$ since we have supposed that the right side term in equation (\ref{alpha2}) has no vanishing subsums. Notice that there exists a finite set $V$ of $\mathbb{S}$-units such that for every $\mathbb{S}$-unit $w$ we have  $w=s\cdot l^2$ for suitable $l\in Q_{\mathbb{S}}^*$ and $s\in V$. So by Lemma \ref{lfu} there are only finitely many possibilities for $u\alpha^2$ and  $u\eta^q\alpha^2$.

We denote by $T_2$ the following set of prime ideals of $Q_{\mathbb{S}}$: 
\bdm T_2=\left\{\p\ \text{prime ideal}\left| \begin{array}{l}\text{$\p$ divides an $\mathbb{S}$-integer $\alpha$ such that there exist $\mathbb{S}$-units }\\\text{$a$, $v$ and a root of unity $\omega$ satisfying the hypotheses }\\\text{of Lemma \ref{n=3part2} and one of the finitely many identities (\ref{omegaeq})}\end{array}\right.\right\}.\edm
The above arguments prove that $T_2$ is a finite set. Now it is sufficient to take 
$T=\left\{ \p\cap K\mid \p\in \mathbb{S}\cup T_1\cup T_2\right\}.$
\end{proof}

\subsection{Proof of Proposition \ref{n=3}.}
Note that every quadratic rational map, defined over $K$ with good reduction outside $S$, which admits a cycle in $\po$ of length $3$, is conjugate via an automorphism in ${\rm PGL}_2(K)$ to a map $\Psi$ of the shape as in (\ref {nf}), where $\lambda=1$, satisfying (\ref{pp0}).
In the proof of Proposition \ref{n=3} we shall use Lemma \ref{n=3part2} and the following lemma:
\begin{lemma}\label{n=3part1}Let $\Psi$ be a quadratic rational map defined over $K$ satisfying (\ref{pp0}). Let $[A]\in{\rm PGL}_2(K)$. If the rational map $[A]\circ\Psi\circ[A]^{-1}$ has good reduction outside $S$, then there exist two $S$-integers $a,c$ such that (\ref{nPsi3}) and part 1) of Proposition \ref{n=3} hold; furthermore:
\begin{itemize}
\item[i)]${\rm Disc}([A])^2=(c-1)\cdot R_S$,
\item[ii)]${\rm Disc}([A])$ divides the ideals $(1+a+a^2)\cdot R_S$ and $(1-(a+c)+(a+c)^2)\cdot R_S$.
\end{itemize}
\end{lemma}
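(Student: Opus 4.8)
The plan is to follow closely the computational scheme already used in the proof of Lemma \ref{n=34}, specialized to $\lambda=1$ and $n=3$, but now tracking the discriminant of the conjugating automorphism explicitly. First I would reduce to the case where $R_S$ is a P.I.D. by the usual argument of Remark \ref{rpid} (intersecting the conclusions over two enlarged sets $\ds_1,\ds_2$ with $\ds_1\cap\ds_2=S$), noting that ${\rm Disc}([A])$ as an ideal of $R_S$ is recovered from its extensions. Since $\Psi$ satisfies (\ref{pp0}), it has the form (\ref{nf}) with $\lambda=1$, so $\Psi([X:Y])=[(X-Y)(aX+bY):X(aX+cY)]$ for some $a,b,c\in R_S$ with no common factor. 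By Lemma \ref{genA} (applicable because $([0:1],[1:0],[1:1])$ is a cycle, the degenerate shape of (\ref{genc}) with $n=3$) we may assume $[A]$ is induced by $A=\bigl(\begin{smallmatrix}\alpha&0\\\beta&1\end{smallmatrix}\bigr)$ with $\alpha,\beta+1$ coprime $S$-integers, and moreover ${\rm Disc}([A])={\rm Disc}([B])$ for the original $[B]$, so it suffices to compute ${\rm Disc}([A])$ for this normalized $A$. Note $\text{Disc}([A])$ is the ideal with $v_\p(\text{Disc}([A]))=v_\p(\det A)-2\min\{v_\p(\text{entries})\}=v_\p(\alpha)$ after clearing common factors; since $\gcd(\alpha,\beta+1)$ is a unit and after the reductions below $\gcd(\alpha,\beta)$ will be a unit too, we get $\text{Disc}([A])=\alpha\cdot R_S$.

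Next I would run the computation of $F,G$ exactly as in (\ref{f1})--(\ref{g5}): conjugating gives polynomials whose $Y^2$-coefficients are divisible by $\alpha^2$, forcing (by good reduction) $\alpha^2\mid$ all coefficients; dividing out shows first $b\in R_S^*$ (so normalize $b=1$), then $\lambda=1$ is trivially a unit, then the coprimality of $\alpha$ and $1+\beta$ (which here is immediate from Lemma \ref{genA}, replacing the delicate congruence argument needed for $\lambda\neq1$), then $\bar\alpha=\gcd(a,\alpha)\in R_S^*$ and $\gcd(\alpha,\beta)\in R_S^*$, then $a\in R_S^*$, and finally $a+c\in R_S^*$ (the role of $a\lambda+c$ with $\lambda=1$). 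This is precisely the content of Lemma \ref{n=34} specialized to the $3$-cycle case, hence part 1) holds and (\ref{nPsi3}) follows. For part i), I would look at the $2\times 2$ determinant in (\ref{aa+c}), which with $\lambda=1$, $b=1$ reads $\frac{a(c-1)}{\alpha}\in R_S$; combined with $a\in R_S^*$ this says $\alpha\mid (c-1)$. Conversely, good reduction forces the resultant of $F,G$ (as in (\ref{f5}),(\ref{g5})) to be an $S$-unit, and a direct resultant computation expresses $v_\p(\text{Res}(F,G))$ in terms of $v_\p(c-1)-2v_\p(\alpha)$; demanding this vanish gives $v_\p(c-1)=2v_\p(\alpha)$ for $\p\notin S$, i.e. $(c-1)\cdot R_S=\alpha^2\cdot R_S=\text{Disc}([A])^2$. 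For part ii), the two linear factors appearing in $F$ and $G$ in (\ref{f5}),(\ref{g5}) must, by good reduction, have the property that any $\p\notin S$ dividing a relevant resultant-type determinant divides all coefficients; computing the analogue of (\ref{a+c}) with $\lambda=1$ gives $a+c\mid \alpha$ (already a unit times), and tracing the divisibilities $\alpha\mid(c-1)$, $\alpha\mid(a-\beta)$ and the relation $1+a+a^2\equiv(\text{something})\cdot\alpha$-type congruences modulo primes dividing $\alpha$ yields $\alpha\mid(1+a+a^2)$; the symmetric statement for the "image" point, i.e. applying \cite[Proposition 6.1]{M.S.2} to translate the cycle and repeating at $[1:1]$ (where the relevant quantity becomes $1-(a+c)+(a+c)^2$ after substituting $c=$ image data), gives $\alpha\mid(1-(a+c)+(a+c)^2)$.

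The main obstacle I anticipate is part ii): establishing the two specific divisibilities $\text{Disc}([A])\mid(1+a+a^2)$ and $\text{Disc}([A])\mid(1-(a+c)+(a+c)^2)$ requires identifying exactly which polynomial combination of $a,c,\alpha,\beta$ is forced to be $\p$-integral (or $\p$-divisible) by good reduction, and recognizing $1+a+a^2$ and $1-(a+c)+(a+c)^2$ as the right invariants. The cleanest route is probably to compute, for the map $\Phi=[A]\Psi[A]^{-1}$ in lowest terms, the quantities $\delta_\p$ between the images of the three cycle points and use \cite[Proposition 6.1]{M.S.2} ($\mathfrak I_i=\mathfrak I(P_k,P_{k+i})$) to get relations among them; since the $3$-cycle of $\Phi$ is $([0:1],[\alpha:\beta],[\alpha:\beta+1])$, the multiplier-type expressions $1+a+a^2$ and $1-(a+c)+(a+c)^2$ should emerge as the "off-diagonal" data $x_iy_j-x_jy_i$ after substituting the cycle relation $[\,1\!:\!1\,]\mapsto[\,0\!:\!1\,]$, which gives $a+c$ back in terms of $a$ via the third iterate condition. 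I would verify the algebra on $\mathbb Q$ first, then lift. Everything else is a faithful rerun of Lemma \ref{n=34}'s machinery, so the only genuinely new input is this discriminant bookkeeping.
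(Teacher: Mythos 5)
Your proposal follows the paper's proof essentially step for step: part 1) and the form (\ref{nPsi3}) come from Lemma \ref{n=34} with $\lambda=1$, the normalization of $[A]$ from Lemma \ref{genA}, and part i) from the resultant computation ${\rm Res}(F,G)=-\alpha^6a(a+c)(c-1)$ together with $\min\{v_{\mathfrak{p}}(F),v_{\mathfrak{p}}(G)\}=2v_{\mathfrak{p}}(\alpha)$ for $\mathfrak{p}\notin S$, which forces $c-1=u\alpha^2$ with $u\in R_S^*$, exactly as you outline. The obstacle you anticipate in part ii) dissolves: the congruence $\beta\equiv a\ (\mathrm{mod}\ \alpha)$ coming from $(a-\beta)/\alpha\in R_S$, substituted into the $S$-integrality of the $XY$-coefficient of $G$, gives $\alpha\mid a^2+a+c$ and hence $\alpha\mid 1+a+a^2$ since $\alpha\mid c-1$; and the second divisibility needs no cycle translation or appeal to \cite[Proposition 6.1]{M.S.2} --- writing $v=a+c$, part i) reads $u\alpha^2=v-a-1$, so $a\equiv v-1\ (\mathrm{mod}\ \alpha)$, and substituting this into $1+a+a^2\equiv 0$ yields $1-v+v^2\equiv 0\ (\mathrm{mod}\ \alpha)$ directly.
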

\begin{proof}
As seen before in the proof of Lemma \ref{n=34}, up to enlarging $S$, we can suppose that the ring $R_S$ is a P.I.D..

Let $\Psi$ be a quadratic rational map satisfying (\ref{pp0}); then there exist three $S$-integers $a,b,c$ such that (\ref {nf}) holds with $\lambda=1$ and the hypotheses of Lemma \ref{n=34} are satisfied. Therefore by replacing $b$ by $1$, $a/b$ by $a$ and $c/b$ by $c$ in (\ref {nf}), we see that (\ref{nPsi3}) and part 1) of  Proposition \ref{n=3} are true.

By Lemma \ref{genA}, without loss of generality, we can assume that 
\beq\label{A} A=\begin{pmatrix}\alpha & 0\\ \beta & 1\end{pmatrix}\ \ \ \text{and}\ \ \ A^{-1}=\begin{pmatrix}1 & 0\\-\beta & \alpha\end{pmatrix}\eeq  for suitable $S$-integers $\alpha$ and $\beta$, where $\alpha$ and $\beta+1$ are coprime in $R_S$. 
We have that $ \Phi([X:Y])\coloneqq[A]\circ \Psi\circ [A]^{-1}=[F(X,Y):G(X,Y)]$; for the readers' convenience we rewrite the identities 
 (\ref{f2}) and (\ref{g2}) in the case $\lambda=1$ since they will be useful in the sequel, 
\begin{align}\label{f23}&F(X,Y)=\alpha(1+\beta)(a-\beta)X^2+\alpha^2(2\beta+1-a)XY-\alpha^3Y^2\\\label{g23}&G(X,Y)=[\beta(1+\beta)(a-\beta)+a-c\beta]X^2+\alpha(2\beta^2 +\beta -\beta a+c)XY-\alpha^2 \beta Y^2.\end{align}
As in the proof of Lemma \ref{n=34} we have that 
\begin{align}\label{N23}&\dfrac{(1+\beta)(a-\beta)}{\alpha}\in R_{S},\\
\label{D23}&\dfrac{\beta(1+\beta)(a-\beta)+a-c\beta}{\alpha^2}\in R_{S},\\
\label{D13}&\dfrac{2\beta^2 +\beta -\beta a+c}{\alpha}\in R_{S}.\end{align}
Since the  $S$-integers $\alpha$ and $1+\beta$ are coprime, by (\ref{N23}) it follows that 
\beq\label{NN23} \dfrac{a-\beta}{\alpha}\in R_{S};\eeq
since $a\in R_{S}^*$, $\alpha$ and $\beta$ are coprime.

The next step is to prove that $c-1=u\alpha^2$, for a suitable $S$-unit $u$, exploiting the good reduction outside $S$ of $\Phi$. 
Since $\Phi=[A]\circ \Psi\circ [A]^{-1}$ has good reduction outside $S$, from Definition \ref{gred} it follows that  $\vap({\rm Disc}(\Phi))=0$ for all prime ideals $\p\notin S$. We express ${\rm Disc}(\Phi)$ by using the representation $\Phi([X:Y])=[F(X,Y):G(X,Y)]$ with $F(X,Y),G(X,Y)$ as in  (\ref{f23}) and (\ref{g23}). By direct calculation we see that 
\beq\label{resphi} {\rm Res}(F,G)=-\alpha^6a(a+c)(c-1).\eeq
Otherwise, we can obtain (\ref{resphi}) by application of the following lemma:

\begin{lemma}\label{DeM} Let $U(X,Y)$ and $V(X,Y)$ be homogeneous polynomials of degree $D$, let $f(X,Y)$ and $g(X,Y)$ be homogeneous polynomials of degree $d$, and let 
\bdm B(X,Y)=U(f(X,Y),g(X,Y))\ \ \ \text{and}\ \ \ C(X,Y)=V(f(X,Y),g(X,Y))\edm
be their compositions. Then 
\bdm  {\rm Res}(B,C)= {\rm Res}(U,V)^d\cdot {\rm Res}(f,g)^{D^2}.\edm
\end{lemma}
\begin{proof}See Proposition 6.1 in \cite{DeM}
\end{proof}
By (\ref{nPsi3}) we have that $\Psi(X,Y)=[aX^2+(1-a)XY-Y^2:aX^2+cXY]$ and by direct calculation we see that
\bdm{\rm Res}(aX^2+(1-a)XY-Y^2,aX^2+cXY)=-a(a+c)(c-1).\edm
Moreover we have that $[A](X,Y)=[\alpha X:\beta X+Y]$ and ${\rm Res}(\alpha X,\beta X+Y)=\alpha$, 
$[A]^{-1}(X,Y)=[X:-\beta X+\alpha Y]$ and ${\rm Res}(X,-\beta X+\alpha Y)=\alpha$. Hence, by applying two times Lemma \ref{DeM} we obtain (\ref{resphi}).

Recall that, as said at the beginning of this proof, the hypotheses of Lemma \ref{n=34} are satisfied, so in particular $a\in R_{S}^*$. Hence, the $S$-integers $\alpha$ and $\beta$ are coprime because (\ref{NN23}) holds. 

Let $\beta\neq0$. By (\ref{f23}), (\ref{g23}), (\ref{N23}), (\ref{D23}), (\ref{D13}) and the coprimality of $\alpha$ and $\beta$, we see that $\min\{\vap(F), \vap(G)\}=2\vap(\alpha)$ for every prime ideal $\p\notin S$. 
Otherwise, if $\beta=0$, by (\ref{NN23}) and $a\in R_{S}^*$, it follows that $\alpha\in R_{S}^*$. Hence, for all prime ideals $\p\notin S$ we have $\vap(\alpha)=0$. Therefore by  (\ref{f23}) it follows that $\min\{\vap(F), \vap(G)\}=2\vap(\alpha)$ still holds and is equal to zero.
Hence, by Definition \ref{gred}, (\ref{resphi}), part 1) of  Proposition \ref{n=3}    and the good reduction of $\Phi$ outside $S$ we have that
\bdm 0=\vap({\rm Disc}(\Phi))=\vap({\rm Res}(F,G))-4\cdot\min\{\vap(F), \vap(G)\}=\vap(\alpha^6(c-1))-8\vap(\alpha)\edm
 holds for every $\p\notin S$. Now, we easily deduce that $c-1=u\alpha^2$, for a suitable $u\in R_{S}^*$. We have proved part i) of the lemma, since ${\rm Disc}([A])=\alpha\cdot R_S$.\\
Now, to prove part ii) we note that from (\ref{NN23}) and (\ref{D13}) the $S$-integer $\alpha$ divides $a^2+a+c$ and, since $\alpha\mid c-1$, it follows that $\alpha$ divides $1+a+a^2$ in $R_{S}$. To ease notation we set $v=a+c\in R_{S}^*$. Part i) of lemma reads as $u\alpha^2=v-a-1$, thus $a\equiv v-1$ (mod $\alpha$), so from $\alpha\mid 1+a+a^2$ in $R_{S}$ we easily obtain that $\alpha$ divides $1-v+v^2$ in $R_{S}$.
\end{proof}


\begin{proof}[Proof of Proposition \ref{n=3}]\noindent By Lemma \ref{n=3part1} it remains to prove only part 2). Without loss of generality we can take an enlarged finite set $S$ so that $R_S$ is a P.I.D.. Let $[A]\in {\rm PGL}_2(K)$ be an automorphism of $\pro$ such that the rational map $[A]\circ\Psi\circ[A]^{-1}$ has good reduction outside $S$. By Lemma \ref{genA} we can assume that the automorphism $[A]$ is induced by a matrix $A$ defined as in (\ref{A}). Let $a$ and $c$ be the $S$-integers satisfying (\ref{nPsi3}) and part 1).  Let $v$ be the $S$-unit $v=a+c$ and $\alpha$ an $S$-integer such that ${\rm Disc}([A])=\alpha\cdot R_S$. Lemma \ref{n=3part1} implies that there exists an $S$-unit $u$ such that the above $S$-units $a$ and $v$ satisfy the hypotheses of Lemma \ref{n=3part2}.  Thus, there exists a finite set $T$ containing $S$ such that  every  $S$-integer $\alpha$, as above, is a $T$-unit. By applying the $T$-unit Equation Theorem to the equation (\ref{alpha2}) we deduce that there are only finitely many possibilities for non-degenerate solutions $(a,v,u\alpha^2)\in R_{S}^*\times R_{S}^*\times R_{S}^{\phantom{*}}$ of (\ref{alpha2}), obtaining only finitely many possibilities for the ideal ${\rm Disc}([A])=\alpha\cdot R_S$.   Now it is clear that we can choose $\mathcal{B}$ equal to 
\footnotesize\begin{center} $\left\{a\in R_{S}^*\mid \text{ there exist }  v,u\in R_{S}^*\ \alpha\in R_{S},\ \text{ such that}\ \ (a,v,u\alpha^2) \ \text{\footnotesize is a non deg. sol. of (\ref{alpha2})} \right\}$\\
$\bigcup$\\
$\left\{v-a\mid a,v\in R_{S}^*\text{ and there exist}\ u\in R_{S}^*,\ \alpha\in R_{S},\ \text{such that}\ \ (a,v,u\alpha^2) \ \text{is a non deg. sol.  of (\ref{alpha2})} \right\}$,\end{center}\normalsize
which is a finite set.

We have degenerate solutions of (\ref{alpha2}) if and only if one of the following conditions hold:
\begin{itemize}
\item[i)] $a=-1$ and $u\alpha^2=v$;
\item[ii)] $v-a=0$ and $u\alpha^2=-1$;
\item[iii)] $v=1$ and $u\alpha^2=-a$,
\end{itemize}
which proves part 2) of Proposition \ref{n=3}. In all cases i), ii) and iii) we have that $\alpha\in R_{S}^*$. 
\end{proof}

\section{Proofs of the main theorems.}

\begin{proof}[Proof of Theorem \ref{m1}]By Lemma \ref{K=S} we can consider the conjugation action induced by ${\rm PGL}_2(K)$.  Every quadratic rational map, defined over $K$ with good reduction outside $S$ and admitting a periodic point in $\pro(K)$ with exact period $3$, is equivalent, via an automorphism of ${\rm PGL}_2(K)$, to a rational map $\Psi$ defined over $K$ which admits the following cycle 
\bdm[0:1]\stackrel{\Psi}{\mapsto}[1:0]\stackrel{\Psi}{\mapsto}[1:1]\stackrel{\Psi}{\mapsto}[0:1].\edm  

\noindent Proposition \ref{n=3} says that there are only finitely many possibilities for $\Psi$ or 
\begin{itemize}
\item[i)] $\Psi([X:Y])=[(X-Y)^2:X(X-(v+1)Y]$ where $v\in R_S^*$ or
\item[ii)] $\Psi([X:Y])=[(X-Y)(aX+Y):aX^2]$ where $a\in R_S^*$ or
\item[iii)] $\Psi([X:Y])=[(X-Y)(aX+ Y  ):X(aX+(1-a)Y]$ where $a\in R_S^*$.
\end{itemize}
Let 
\bdm B=\begin{pmatrix}1& -1\\1&0\end{pmatrix}.\edm
The induced automorphism $[B]=[X-Y:X]\in {\rm PGL}_2(K)$ has inverse $[B]^{-1}=[-Y:X-Y]$. By direct calculation we see that 
\bdm [B]\circ [(X-Y)^2:X(X-(v+1)Y]\circ[B]^{-1}=[(X-Y)(-v^{-1}X+Y):-v^{-1}X^2]\edm which has the form as in ii) and 
\bdm [B]^{-1}\circ [(X-Y)(aX+ Y  ):X(aX+(1-a)Y]\circ[B]=[(X-Y)(-a^{-1}X+Y):-a^{-1}X^2]\edm which still has the form as in ii). This concludes the proof.
\end{proof}

Now we prove that the set of maps of the form as in (\ref{param}) represents an infinite set of classes under the conjugation action induced by ${\rm PGL}_2(\overline{\mathbb{Q}})$.

\begin{prop}\label{endp}For every $a\in\mathbb{G}_m$, let $\Phi_a$ be the quadratic map defined in (\ref{param}). Each element of $M_2(\overline{\mathbb{Q}})$ contains at most six maps of the form $\Phi_a$.
\end{prop}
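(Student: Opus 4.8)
The plan is to count, for a fixed $a\in\mathbb{G}_m$, how many values $a'\in\mathbb{G}_m$ give a map $\Phi_{a'}$ that is $\mathrm{PGL}_2(\overline{\mathbb{Q}})$-conjugate to $\Phi_a$, and show this count is at most six (uniformly in $a$), which immediately yields that the family parametrizes infinitely many classes. The natural invariant to exploit is the orbit structure: by construction $\Phi_a$ has the $3$-cycle $[0:1]\mapsto[1:0]\mapsto[1:1]\mapsto[0:1]$ (one checks $\Phi_a([0:1])=[-1:0]=[1:0]$, $\Phi_a([1:0])=[a:a]=[1:1]$, $\Phi_a([1:1])=[0:a]=[0:1]$). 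So any conjugating automorphism $[A]$ sends the ordered $3$-cycle of $\Phi_a$ to \emph{some} ordered $3$-cycle of $\Phi_{a'}$. First I would determine all the period-$3$ points of $\Phi_a$: since a quadratic map has at most $2^3-2=6$ points of exact period $3$ (the bound $\sum_{k\mid n}\mu(n/k)d^k$ from the introduction with $d=2$, $n=3$), these six points form at most two $3$-cycles. I would compute them explicitly by solving $\Phi_a^3([x:1])=[x:1]$, factoring out the period-$1$ locus, to see exactly which cycles occur and identify the full set of period-$3$ points as an (at most) $6$-element set depending on $a$.

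Next I would use $3$-transitivity of $\mathrm{PGL}_2$ on $\po$ together with the cycle-matching observation. A conjugacy $[A]\Phi_a[A]^{-1}=\Phi_{a'}$ must carry the distinguished ordered cycle $([0:1],[1:0],[1:1])$ of $\Phi_a$ to an ordered $3$-cycle of $\Phi_{a'}$; there are at most two unordered period-$3$ cycles, hence at most $2\cdot 3=6$ ordered cycles (three cyclic rotations each), and once the images of three points are prescribed the automorphism $[A]$ is uniquely determined. Thus there are at most six candidate automorphisms $[A]$ (and in fact, since the source cycle of $\Phi_{a'}$ is also the standard one, at most six candidate substitutions), each of which, upon conjugating $\Phi_a$, produces a specific map whose shape pins down $a'$ — i.e.\ each candidate yields at most one admissible $a'$. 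Therefore at most six values $a'$ can be conjugate to a given $a$, and this bounds the fiber of the map $\mathbb{G}_m\to M_2(\overline{\mathbb{Q}})$, $a\mapsto[\Phi_a]$, by six. Since $\mathbb{G}_m(\overline{\mathbb{Q}})$ is infinite, the image is infinite.

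The main obstacle I expect is the explicit computation of the period-$3$ points of $\Phi_a$ and verifying that the cyclic structure and the resulting algebra behave uniformly in $a$ — in particular confirming that each of the (at most six) admissible conjugating maps forces a single value of $a'$ rather than a one-parameter family, and handling any degenerate values of $a$ (e.g.\ where $\Phi_a$ drops degree or the period-$3$ points collide) separately. Concretely, after normalizing so the target cycle is the standard one $([0:1],[1:0],[1:1])$, conjugation by $[A]$ fixing this ordered triple reduces $[A]$ to the identity, while the rotations and the possible second cycle of $\Phi_{a'}$ give finitely many explicit Möbius substitutions; plugging each into the normal form $(X-Y)(aX+Y):aX^2$ and matching coefficients gives a polynomial relation determining $a'$ from $a$ with at most one solution per substitution. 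I would organize the bookkeeping by noting that two rotations of a fixed cycle correspond to conjugation by the order-$3$ element cyclically permuting $[0:1],[1:0],[1:1]$, reducing the count to: (number of unordered cycles, $\leq 2$) $\times$ (rotations, $3$) $=6$.
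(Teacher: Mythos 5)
Your proposal is correct and follows essentially the same route as the paper's proof: a quadratic map has at most six points of exact period $3$, hence at most two $3$-cycles, and since a conjugacy $[A]$ must match the distinguished ordered cycle $([0:1],[1:0],[1:1])$ with one of the at most $2\times 3=6$ ordered $3$-cycles, $3$-transitivity pins $[A]$ down to at most six choices, each determining at most one target map. The explicit computation of the period-$3$ points that you flag as a possible obstacle is not actually needed; the counting bound alone suffices, exactly as in the paper.
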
 
\begin{proof}[Proof of Proposition \ref{endp}]Let $a\in\mathbb{G}_m$ be fixed. 
We have to prove that there exist at most six maps of the form $\Phi_b$  conjugate    to the map $\Phi_a$. Notice that the rational map $\Phi_a$ admits at most two cycles of length $3$ in $\pro(\overline{\mathbb{Q}})$. The ordered triple $([0:1],[1:0],[1:1])$ is a cycle for $\Phi_a$ for every integer $a$. Let $(P_0,P_1,P_2)$ be the \vvs second\vvd\ cycle of length $3$ for $\Phi_a$ (it could be equal to $([0:1],[1:0],[1:1])$). If $[A]\in{\rm PGL}_2(\overline{\mathbb{Q}})$ is an automorphism such that the map $[A]\circ\Phi_a\circ[A]^{-1}=\Phi_b$ for some $b\in\mathbb{G}_m$, then either 
\bdm ([A]([0:1]),[A]([1:0]),[A]([1:1]))=([0:1],[1:0],[1:1])\edm
or 
\bdm ([A](P_0),[A](P_1),[A](P_2))=([0:1],[1:0],[1:1]).\edm
Hence we deduce that there are only six possibilities for the automorphism $[A]\in{\rm PGL}_2(\overline{\mathbb{Q}})$ and so there are only six possibilities for the map $\Phi_b$. \end{proof}
To prove Theorem \ref{tf32} we shall use the following result:
\begin{prop}\label{fic}
Let $\mathbb{I}$ be a given finite set of ideals of $R_S$. Let $n$ be a fixed positive integer. There exist only finitely many inequivalent tuples $\ciclon$ such that 
\beq\label{fec}\mathfrak{I}(P_i,P_j)\in\mathbb{I}\ \ \ \text{for all distinct $0\leq i,j\leq n-1$},\eeq
where the ideals $\mathfrak{I}(P_i,P_j)$ are those defined in (\ref{I_ij}).
\end{prop}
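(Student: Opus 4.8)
The plan is to reduce the statement about $n$-tuples with prescribed pairwise distance ideals to the $S$-unit equation theorem, by using the three-transitivity of ${\rm PGL}_2(K)$ together with the fact that, after enlarging $S$ so that $R_S$ is a P.I.D., each $\mathfrak{I}(P_i,P_j)$ has only finitely many possibilities. First I would enlarge $S$ (harmless by Remark \ref{rpid}, since $R_S^*$ grows and the finiteness we prove for a larger set implies finiteness for the original one, or rather the inequivalent tuples for $S$ surject onto those for the enlarged set only after care — actually the cleaner route is: a tuple satisfying (\ref{fec}) for $S$ also satisfies it for any enlarged $\mathbb{S}$ with the image finite set of ideals, and inequivalence can only decrease, so it suffices to bound the enlarged problem). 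So assume $R_S$ is a P.I.D. Given a tuple $\ciclon$, apply an automorphism in ${\rm PGL}_2(K)$ (which does not change the ideals $\mathfrak{I}(P_i,P_j)$, as noted after Definition \ref{eqcy}) to normalize $P_0=[0:1]$, $P_1=[1:0]$, $P_2=[1:1]$; for $n\le 3$ there is nothing left to prove, so assume $n\ge 4$.

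Next I would, as in the proof of Lemma \ref{finc}, choose coprime homogeneous coordinates $P_i=[\lambda_i:\mu_i]$ with $\lambda_i,\mu_i\in R_S$ for $3\le i\le n-1$. Using the explicit description of $\mathfrak{I}$ in the P.I.D. case (the displayed formula $\mathfrak{I}([x_0:y_0],[x_1:y_1])=((x_0y_1-x_1y_0)/(d_0d_1))\cdot R_S$), the hypothesis (\ref{fec}) says that each of the finitely many ideals
\[
\mathfrak{I}(P_0,P_i)=\mu_i R_S,\quad \mathfrak{I}(P_1,P_i)=\lambda_i R_S,\quad \mathfrak{I}(P_2,P_i)=(\mu_i-\lambda_i)R_S
\]
lies in $\mathbb{I}$. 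After enlarging $S$ once more to absorb generators of the finitely many ideals in $\mathbb{I}$, this forces $\lambda_i$, $\mu_i$ and $\mu_i-\lambda_i$ all to be $S$-units. Then $\lambda_i/\mu_i$ and $1-\lambda_i/\mu_i$ are $S$-units, i.e. $\lambda_i/\mu_i$ solves the $S$-unit equation $x+y=1$, which by the $S$-Unit Equation Theorem has only finitely many solutions; hence there are only finitely many possibilities for each point $P_i$, and therefore for the whole normalized tuple. Since every tuple satisfying (\ref{fec}) is equivalent (via ${\rm PGL}_2(K)$, and then via ${\rm PGL}_2(R_S)$ after the enlargements are accounted for) to one of these finitely many normalized tuples, we are done; one final point is to check that passing between ${\rm PGL}_2(K)$-equivalence and ${\rm PGL}_2(R_S)$-equivalence costs only finitely much — but here the normalization is by arbitrary ${\rm PGL}_2(K)$-automorphisms and the conclusion is finiteness up to ${\rm PGL}_2(R_S)$, so one must note that the ambiguity in the normalizing automorphism is controlled, or simply observe that only finitely many normalized tuples arise and each ${\rm PGL}_2(K)$-class meets the set of normalized tuples, while the distinctness of ${\rm PGL}_2(R_S)$-classes among finitely many tuples is automatic.

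The main obstacle, I expect, is the bookkeeping around enlarging $S$: one must ensure that finiteness of inequivalent tuples for an enlarged set $\mathbb{S}$ (with $R_{\mathbb{S}}$ a P.I.D. and with $\mathbb{I}$ replaced by the principal ideals it generates) actually implies the desired finiteness for the original $S$ and $\mathbb{I}$. The cleanest way is the argument of Lemma \ref{n=34}: use Remark \ref{rpid} to pick two enlargements $\mathbb{S}_1,\mathbb{S}_2$ with $\mathbb{S}_1\cap\mathbb{S}_2=S$, prove the statement for each, and intersect. Alternatively, one simply argues directly that a set of tuples which becomes finite modulo ${\rm PGL}_2(R_{\mathbb{S}})$ was already, a fortiori, finite modulo the smaller group ${\rm PGL}_2(R_S)$ only if one is careful — in fact the implication goes the wrong way for the group, so the two-enlargement trick (or just working with ${\rm PGL}_2(K)$-equivalence throughout and invoking that only finitely many normalized representatives occur) is the safe route. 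Apart from this, the argument is essentially the one already carried out inside the proof of Lemma \ref{finc}, extracted and applied to all pairs $(P_i,P_j)$ rather than only to pairs involving $P_0$.
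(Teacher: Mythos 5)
Your argument has a genuine gap at its central step: the claim that applying an automorphism of ${\rm PGL}_2(K)$ \emph{does not change the ideals} $\mathfrak{I}(P_i,P_j)$ is false. The invariance noted after Definition \ref{eqcy} holds only for automorphisms in ${\rm PGL}_2(R_S)$; a general element of ${\rm PGL}_2(K)$ does change the $\mathfrak{p}$-adic logarithmic distances (for a uniformizer $\pi$ at $\mathfrak{p}$, the map $[X:Y]\mapsto[\pi X:Y]$ sends the pair $([0:1],[1:1])$, which has $\delta_{\mathfrak{p}}=0$, to $([0:1],[\pi:1])$, which has $\delta_{\mathfrak{p}}=1$). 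Consequently, after you normalize $P_0,P_1,P_2$ to $[0:1],[1:0],[1:1]$ the new tuple need not satisfy (\ref{fec}) any more, so the identities $\mathfrak{I}(P_0,P_i)=\mu_iR_S\in\mathbb{I}$, etc., on which your $S$-unit equation step rests, are unjustified. A second, related, gap is the final deduction: even granting finitely many normalized representatives, you have only bounded the set of tuples modulo ${\rm PGL}_2(K)$, whereas the Proposition asserts finiteness modulo the much smaller group ${\rm PGL}_2(R_S)$. A single ${\rm PGL}_2(K)$-orbit can contain infinitely many ${\rm PGL}_2(R_S)$-inequivalent tuples (this is exactly the phenomenon behind the pairs $([0:1],[a:1])$ discussed in Section 2), and the set of tuples satisfying (\ref{fec}) is infinite, so the remark that ``distinctness of ${\rm PGL}_2(R_S)$-classes among finitely many tuples is automatic'' does not apply: only the set of \emph{normalized images} is finite, not the set of tuples you started from.

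Both gaps can in fact be closed along your lines, but this requires an argument you do not give. After enlarging $S$ so that $R_S$ is a P.I.D.\ and every ideal in $\mathbb{I}$ becomes the unit ideal, choose coprime coordinates $(x_i,y_i)$; then every $x_iy_j-x_jy_i$ is an $S$-unit, and the explicit matrix $\begin{pmatrix} y_0d_1 & -x_0d_1\\ y_1d_0 & -x_1d_0\end{pmatrix}$ with $d_j=x_2y_j-x_jy_2$, which carries $(P_0,P_1,P_2)$ to $([0:1],[1:0],[1:1])$, has $S$-integral entries and determinant $(x_0y_1-x_1y_0)d_0d_1\in R_S^*$, hence lies in ${\rm GL}_2(R_S)$. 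So the normalization is actually integral, the ideals are preserved, and your $S$-unit count then bounds the number of ${\rm PGL}_2(R_{\mathbb{S}})$-classes; the descent from the enlarged sets back to $S$ via two enlargements with $\mathbb{S}_1\cap\mathbb{S}_2=S$ does work here because the action of ${\rm PGL}_2$ on tuples of at least three distinct points is free. The paper avoids all of this by a different route: it packages the tuple into the binary form $\prod_i(x_iX-y_iY)$, shows that hypothesis (\ref{fec}) pins down its discriminant up to a controlled unit, and invokes the Birch--Merriman finiteness theorem for binary forms of given degree and discriminant, which delivers finiteness directly up to ${\rm GL}_2(R_S)$-equivalence.
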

This proposition is a generalization of Proposition 2 in \cite{C.1} where the set $\mathbb{I}$ contains only the ideal $R_S$. Actually the techniques to prove these two propositions are the same.
Therefore we give here a very brief proof, for the details see the proof in \cite{C.1}. 
\begin{proof}
Let $(P_0,P_1,\ldots,P_{n-1})$ be a tuple which satisfies (\ref{fec}). For every index $0\leq i\leq (n-1)$, since ${\mathbb{I}}$ is a finite set of ideals, by the finiteness of the class number of $R_S$, we can choose  $(x_i,y_i)\in R_S^2$ as homogeneous coordinates representing the point $P_i=[x_i:y_i]$, such that for all $0\leq i,j\leq (n-1)$ the equalities $x_iy_j-x_jy_i=r_{i,j}u_{i,j}$ hold, where $u_{i,j}\in R_S^*$ and $r_{i,j}$ belong to a finite set which can be chosen only depending on $S$ and ${\mathbb{I}}$. 
To every tuple $(P_0,P_1,\ldots,P_{n-1})$, and for every index $0\leq i\leq (n-1)$, we associate the following binary form
\bdm F(X,Y)=\prod_{0\leq i\leq (n-1)}(x_iX-y_iY)\edm
defining in this way a family of forms with discriminant 
\bdm D(F)=u\left(\prod_{0\leq i<j\leq (n-1)}r_{i,j}^2\right)\edm
where $u$ is a suitable $S$-unit.

There exists a finite set $W$, only depending on $S$, such that for every $S$-unit $u$ we have that $u=w\cdot v^{2n-2}$ for suitable $w\in W$ and $v\in R_S^*$. Therefore, for every cycle $(P_0,P_1,\ldots,P_{n-1})$ satisfying (\ref{fec}) there exists a $S$-unit $v^{-1}$ such that by taking $(v^{-1}\cdot x_0,v^{-1}\cdot y_0)$ as homogeneous coordinates  representing the point $P_0=[x_0:y_0]$, we can assume that the discriminant of every binary form $F(X,Y)$ defined above belongs to a finite set which only depends on $S$ and ${\mathbb{I}}$.
Now it is sufficient to apply the result obtained by Birch and Merriman in \cite{B.M.1} on the finiteness of the classes of binary forms with given degree and given discriminant. See \cite{E.4} for an effective form of the Birch and Merriman's result.
\end{proof}
\begin{proof}[Proof of Theorem \ref{tf32}]%
The case $n\geq 4$ is an immediate application of Theorem $\ref{tnf}^\prime$.

Let $n=3$. Let $(P_0,P_1,P_2)$ be a cycle in $\po$ for a quadratic rational map $\Phi$ defined over $K$ with good reduction outside $S$. 
Let $[C]\in{\rm PGL}_2(K)$ an automorphism which sends the ordered triple $(P_0,P_1,P_2)$ into the ordered triple \mbox{$([0:1]$},\mbox{$[1:0]$},\mbox{$[1:1])$}. The ordered $n$-tuple $([0:1],[1:0],[1:1])$ is a cycle for the quadratic rational map $[C]\circ\Phi\circ[C]^{-1}$. We set $\Psi=[C]\circ\Phi\circ[C]^{-1}$ and $[A]=[C]^{-1}$. The rational map $\Psi$ and the automorphism $[A]$ satisfy the hypothesis of Proposition \ref{n=3}. Furthermore $[A]([0:1])=P_0$ and $[A]([1:0])=P_1$. Let
\bdm A=\begin{pmatrix}a&b\\c&d\end{pmatrix}\edm
be a matrix in M$_{2\times 2}(R_S)$ which induces the automorphism $[A]$. We see immediately that 
$\vap({\rm Disc}(A))=\vap(ad-cb)-2\min\{\vap(a),\vap(b),\vap(c),\vap(d)\}$
and $P_0=[b:d]$ and $P_1=[a:c]$ so that
\bdm \vap(\mathfrak{I}_1)=\vap(\mathfrak{I}(P_0,P_1))=\vap (ad-cb)-\min\{\vap(a),\vap(c)\}-\min\{\vap(b),\vap(d)\}.\edm
In this way we have proved that $\vap(\mathfrak{I}_1)\leq\vap({\rm Disc}(A))$. Since Proposition \ref{n=3} tells us that there are only finitely many possibilities for the ideal ${\rm Disc}(A)$, the same holds also for the ideal $\mathfrak{I}_1$. By \cite[Proposition  6.1]{M.S.2} we have that $\mathfrak{I}_1=\mathfrak{I}_2$, hence there exists a finite set ${\mathbb{I}}$ such that if $(P_0,P_1,P_2)$ is a cycle for a quadratic rational map defined over $K$ with good reduction outside $S$, then 
\bdm\mathfrak{I}(P_i,P_j)\in{\mathbb{I}}\ \ \ \text{for all distinct $0\leq i\neq j\leq 2$}.\edm
By applying Proposition \ref{fic} we obtain that the set of classes of $3$-cycles, for some quadratic rational maps defined over $K$ with good reduction outside $S$, is finite. 
\end{proof}

\bibliographystyle{plain}
\bibliography{DynamicalSystems}

\bigskip

\noindent Jung Kyu CANCI\\
e-mail: jungkyu.canci@math.univ-lille1.fr\\
Laboratoire Paul Painlevé, Mathématiques, Université Lille 1, 59655 Villeneuve d'Ascq Cedex, France
\end{document}